\newtheorem{theo}{Theorem}[section]
\newtheorem{lm}{Lemma}[section]
\newtheorem{cor}{Corollary}[section]
\newtheorem{rmk}{Remark}[section]
\newtheorem{prop}{Proposition}[section]
\numberwithin{equation}{section}
\newcommand{\R}{{\mathbb R}}
\newcommand{\ZP}{{\mathbb Z}_+}
\newcommand{\N}{{\mathbb N}}
\newcommand{\Exp}{{\mathbb E}}
\renewcommand{\Pr}{{\mathbb P}}
\newcommand{\1}{{\mathbf 1}}
\newcommand{\eps}{\varepsilon}
\newcommand{\toas}{{\stackrel{a.s.}{~\longrightarrow~}}}
\newcommand{\tod}{{\stackrel{d}{~\longrightarrow~}}}
\newcommand{\eqd}{{\stackrel{d}{~=~}}}
\newcommand{\re}{{\mathrm{e}}}
\newcommand{\XX}{{\cal X}}
\newcommand{\ud}{{\mathrm d}}
\newcommand{\FF}{{\mathcal F}}
\newcommand{\as}{\ \textrm{a.s.}}
\newcommand{\card}{\mathop{\#}}
\newcommand{\essinf}{{\rm ess} \inf}
\newcommand{\esssup}{{\rm ess} \sup}
\newcommand{\ord}{\mathop{\mathrm{ord}}}
\newcommand{\hull}{\mathop{\mathrm{hull}}}
\title{Convergence in a multidimensional randomized {K}eynesian beauty contest}
\author{Michael Grinfeld\footnote{Department of Mathematics and Statistics, University of Strathclyde, 26 Richmond Street, Glasgow G1 1XH, UK.}
\and
Stanislav Volkov\footnote{Centre for Mathematical Sciences, Lund University, Box 118 SE-22100, Lund, Sweden, and Department of Mathematics, University of Bristol, University Walk, Bristol BS8 1TW, UK.}
 \and Andrew R.\ Wade\footnote{Department of Mathematical Sciences, Durham University, South Road, Durham DH1 3LE, UK.}
  }
\begin{document}

\maketitle

\begin{abstract}
We study the asymptotics of a Markovian system of $N \geq 3$ particles in $[0,1]^d$ in which, at each step in discrete time,
the particle farthest from the current centre of mass is removed and replaced by an independent $U [0,1]^d$ random particle.
We show that the
 limiting configuration contains $N-1$ coincident particles at a random location $\xi_N \in [0,1]^d$.
A key tool
in the analysis is a Lyapunov function based on the squared radius of gyration (sum of squared distances) of the points.
For $d=1$ we give additional results on the distribution of the limit $\xi_N$,
showing, among  other things, that it gives positive probability to any nonempty interval subset of $[0,1]$,
and giving a reasonably explicit description in the smallest nontrivial case, $N=3$.
\end{abstract}

\smallskip
\noindent
{\em Keywords:} Keynesian beauty contest;  radius of gyration; rank-driven process; sum of squared distances. \/

\noindent
{\em AMS 2010 Subject Classifications:} 60J05 (Primary) 60D05, 60F15,  60K35, 82C22, 91A15 (Secondary)

\section{Introduction, model, and results}

In a {\em Keynesian beauty contest}, $N$ players each guess a number, the winner being the player whose
guess is closest to the mean of all the $N$ guesses; the name marks
Keynes's discussion of
 ``those newspaper competitions in which the competitors have to pick out the six prettiest faces from a hundred photographs, the prize being awarded to the competitor whose choice most nearly corresponds to the average preferences of the competitors as a whole'' \cite[Ch.\ 12, \S V]{keynes}.
 Moulin \cite[p.\ 72]{moulin} formalized a  version of the game played on a real interval, the
 ``$p$-beauty contest'', in which the target is $p$ ($p>0$) times the mean value. See e.g.\ \cite{degr} and references therein for
 some recent work on game-theoretic aspects of such ``contests'' in economics.

 In this paper we study a stochastic process
 based on an iterated  version of  the game, in which players  {\em randomly} choose a value in $[0,1]$, and at each
 step the worst performer (that is, the player whose guess is farthest from the mean)
  is replaced by a new player; each player's guess is fixed as soon as they enter the game, so a single
 new random value enters the system at each step.
 Analysis of this model was posed as an open problem in \cite[p.\ 390]{gkw}.
 The natural setting for our techniques is in fact a generalization in which
 the values live in $[0,1]^d$ and the target is the barycentre (centre of mass) of the values. We now formally describe the model and state our main results.

Let $d \in \N := \{1,2,\ldots\}$.
We use the notation $\XX_n = ( x_1, x_2, \ldots, x_n )$ for a vector of $n$ points $x_i \in \R^d$.
We write $\mu_n (\XX_n)
:= n^{-1} \sum_{i=1}^n x_i$
for the barycentre of $\XX_n$, and $\| \, \cdot \, \|$ for the Euclidean norm on $\R^d$.
Let
$\ord ( \XX_n ) = ( x_{(1)} , x_{(2)} , \ldots, x_{(n)})$
denote the {\em barycentric order statistics} of $x_1, \ldots, x_n$, so that
$ \| x_{(1)} - \mu_n (\XX_n ) \| \leq \| x_{(2)} - \mu_n (\XX_n) \| \leq \cdots \leq \| x_{(n)} - \mu_n (\XX_n) \|$;
any ties are broken randomly.
We call $\XX^*_n :=  
x_{(n)}$ the {\em extreme} point of $\XX_n$, a point of $x_1, \ldots, x_n$
  farthest from the barycentre. We define the {\em core} of $\XX_n$ as
$\XX_n'
:= (x_{(1)}, \ldots, x_{(n-1)})$, the vector of $x_1, \ldots, x_n$ with the extreme point removed.

The Markovian model that we study is defined as follows.
Fix $N \geq 3$. Start with $X_1(0), \ldots, X_N(0)$, distinct points in $[0,1]^d$, and write $\XX_N (0) := ( X_{(1)} (0), \ldots, X_{(N)} (0) )$
for the corresponding ordered vector. One possibility is to start with a {\em uniform random} initial configuration, by taking
$X_1(0), \ldots, X_N (0)$ to be independent $U[0,1]^d$ random variables; here and elsewhere $U [0,1]^d$ denotes the uniform
distribution on $[0,1]^d$. In this uniform random initialization,
all $N$ points are indeed distinct with probability 1.
Given $\XX_N(t)$, replace $\XX^*_N(t) = X_{(N)} (t)$ by an independent $U[0,1]^d$ random variable $U_{t+1}$, 
so that $\XX_N (t+1) = \ord ( X_{(1)} (t), \ldots, X_{(N-1)} (t), U_{t+1} )$.

The  interesting case is  when $N \geq 3$: the case $N=1$ is trivial, and the case $N=2$ is also
uninteresting since at each step either point is replaced with probability $1/2$ by a $U[0,1]$ variable, so that,
regardless of the initial configuration, after a finite number of steps we will have two independent $U[0,1]$ points.
Our main result, Theorem \ref{thm1}, shows that for $N \geq 3$ all but the most extreme point of the configuration converge
to a common limit.
 
\begin{theo}
\label{thm1}
Let $d\in \N$ and $N \geq 3$. Let $\XX_N(0)$ consist of $N$ distinct points in $[0,1]^d$.
There exists a random $\xi_N := \xi_N (\XX_N(0)) \in [0,1]^d$ such that
\begin{equation}
\label{thm1a}
 \XX'_N(t) \toas ( \xi_N, \xi_N, \ldots, \xi_N) , \textrm{~~and~~} \XX_N^*(t) - U_t \toas 0 ,\end{equation}
 as $t \to \infty$.
In particular, for $U \sim U  [0,1]^d  $, as $t \to \infty$,
\[ \XX_N (t) \tod ( \xi_N, \xi_N, \ldots, \xi_N, U ) .\]
\end{theo}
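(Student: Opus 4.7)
The strategy is to control the process using the Lyapunov function
\[ V_t := \sum_{1\le i<j\le N-1} \|X_{(i)}(t)-X_{(j)}(t)\|^2 = (N-1)\sum_{i=1}^{N-1} \|X_{(i)}(t)-\mu'_t\|^2, \]
where $\mu'_t$ denotes the barycentre of the core $\XX'_N(t)$. The first step is to establish a clean one-step identity. Writing $\mu^*_{t+1}$ for the barycentre of the intermediate cloud $(X_{(1)}(t),\ldots,X_{(N-1)}(t),U_{t+1})$ and $Z_{t+1} := \XX^*_N(t+1)$ for its farthest point, applying the polarization identity $\|a-c\|^2-\|a-b\|^2 = \langle 2a-b-c,\ b-c\rangle$ inside the surviving core-core squared distances and simplifying via $(N-1)\mu'_t = N\mu^*_{t+1}-U_{t+1}$ causes the cross terms to collapse and yields
\[ V_{t+1}-V_t = N\bigl(\|U_{t+1}-\mu^*_{t+1}\|^2 - \|Z_{t+1}-\mu^*_{t+1}\|^2\bigr) \le 0, \]
with equality precisely when $Z_{t+1}=U_{t+1}$. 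Hence $V_t$ is pathwise non-increasing and converges a.s.\ to some $V_\infty\ge 0$.

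To upgrade $V_\infty \ge 0$ to $V_\infty = 0$ almost surely, I would establish a uniform lower bound: whenever $V_t \ge \eps$, the core has diameter $\gtrsim \sqrt{\eps/(N-1)}$, and there exists a region of $[0,1]^d$ of Lebesgue measure bounded below by $c_1(\eps)>0$ on which $U_{t+1}$ lands sufficiently far inside the current core's convex hull that some old core point must be the new extreme, forcing $\|Z_{t+1}-\mu^*_{t+1}\|^2 - \|U_{t+1}-\mu^*_{t+1}\|^2 \ge c_2(\eps) > 0$. Combined with the a.s.\ finiteness of the telescoping sum $\sum_t(V_t-V_{t+1}) = V_0 - V_\infty$, this rules out $\{V_\infty \ge \eps\}$ for every $\eps>0$, and hence the core diameter $D_t := \max_{i,j\le N-1}\|X_{(i)}(t)-X_{(j)}(t)\|$ tends to $0$ a.s.

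Finally I would identify the limit $\xi_N$. The centroid $\mu'_t$ changes only when some old core point becomes the new extreme, and a geometric comparison of distances to $\mu^*_{t+1}$ (using $D_t$ as a bound on the core's spread) shows this can happen only when $U_{t+1}$ lies within a ball of radius $O(D_t)$ about $\mu'_t$ (an event of Lebesgue probability $O(D_t^d)$), in which case $\|\mu'_{t+1}-\mu'_t\| = \|U_{t+1}-X_{(k)}(t)\|/(N-1) = O(D_t)$. Summing the resulting conditional second moments---after extracting a quantitative decay rate for $D_t$ by iterating the one-step decrement estimate---yields that $\mu'_t$ is a.s.\ Cauchy, hence $\mu'_t \to \xi_N \in [0,1]^d$ a.s., which together with $D_t\to 0$ delivers the first part of \eqref{thm1a}. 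For the second part, on the event $\{Z_t=U_t\}$ we have $\XX^*_N(t)-U_t=0$ exactly, while on the complementary event both $\XX^*_N(t)$ and $U_t$ lie within $O(D_{t-1})$ of $\mu'_{t-1}$; thus $\XX^*_N(t)-U_t\to 0$ a.s. The distributional limit is then immediate from the independence of $U_t$ and $\FF_{t-1}$. \emph{The main obstacle} is obtaining the two quantitative estimates described above: the uniform lower bound on the per-step expected decrement of $V_t$ needed to force $V_\infty=0$, and a rate of decay for $D_t$ sharp enough to run the summability argument that promotes $\mu'_t$ from a tight family to an a.s.\ convergent sequence.
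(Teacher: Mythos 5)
Your plan is essentially the paper's own proof: your $V_t$ is $(N-1)$ times the paper's Lyapunov function $F(t)$, your one-step identity is precisely equation (\ref{Fchange}) (with $\mu^*_{t+1}$ the paper's $\mu_{\rm new}$ and $Z_{t+1}$ its $y$), and the two quantitative estimates you flag as the main obstacles are exactly what the paper establishes in Lemmas \ref{lem3}--\ref{lem5}: a ball of radius proportional to the core diameter on which $U_{t+1}$ forces a multiplicative drop in $F$ (hence $F(t)\to 0$ a.s.\ via Lemma \ref{lem4}), and exponential decay of the diameter along the subsequence $\tau_n$ of times at which a decrease is possible. Given those, your summability argument for the centroid increments, the use of $D_t\to 0$ to collapse the core, and the case analysis for the extreme point on $A_{t+1}$ versus $A_{t+1}^{\rm c}$ mirror the paper's Lemma \ref{lem7} and proof of Theorem \ref{thm1} step for step.
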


\begin{rmk}
\label{rem0}
Under the conditions of Theorem \ref{thm1}, despite the fact that $\XX_N^* (t) - U_t \to 0$ a.s.,  we will see below that $\XX_N^* (t) \neq U_t$ infinitely often a.s.
\end{rmk}

  Theorem \ref{thm1} is proved in Section \ref{sec:proof}. Then,  Section \ref{sec:1d} is devoted to the one-dimensional case, where we obtain various
  additional results on the limit $\xi_N$. Finally, the Appendices, Sections \ref{appendix}
and \ref{appendix2}, collect some results on uniform spacings and continuity of
distributional fixed-points that we use in parts of the analysis
  in Section \ref{sec:1d}.

\section{Proof of convergence}
\label{sec:proof}

Intuitively, the evolution of the  process is as follows. If, on replacement of the extreme point, the new point is the next extreme point
(measured with respect to the new centre of mass), then the core is unchanged. However, if the new point is not extreme, it typically penetrates
the core significantly, while a more extreme point is thrown out of the core, reducing the size of the core in some sense (we give a precise
statement below). Tracking the evolution of the core, by following its centre of mass, one sees increasingly long periods of inactivity, since
as the size of the core decreases  changes occur less often, and moreover the magnitude of the changes decreases in step with the size of the core.
The dynamics are nontrivial, but bear some resemblance to random walks with decreasing steps (see e.g.\ \cite{kr,erdos} and references therein)
as well as processes with reinforcement such as the P\'olya urn (see e.g.\ \cite{pemantle} for a survey).

Our analysis will rest on a `Lyapunov function' for the process, that is, a function of the configuration
that possesses pertinent asymptotic properties. One may initially hope, for example, that the diameter of the point set $\XX_N(t)$
would decrease over time, but this cannot be the case because the newly added point can be anywhere in $[0,1]^d$. What then about the
diameter of $\XX_N'(t)$, for which the extreme point is ignored? We will show later in this section
 that this quantity is in fact well behaved,
but we have to argue somewhat indirectly: the diameter of $\XX_N'(t)$ can increase (at least for $N$ big enough; see Remark \ref{rmk1} below).
However, there {\em is} a monotone decreasing function associated with the process, based on the sum of squared distances of a configuration,
which we will use as our Lyapunov function.

For $n \in \N$ and $\XX_n = ( x_1, x_2, \ldots, x_n )  \in \R^{dn}$, write
\begin{equation}
\label{Gdef}
G_n ( \XX_n ) := G_n (x_1, \ldots, x_n ) := n^{-1} \sum_{i=1}^n \sum_{j=1}^{i-1} \| x_i - x_j \|^2 = \sum_{i=1}^n \| x_i - \mu_n ( \XX_n ) \|^2 ;
\end{equation}
a detailed proof of the (elementary) final equality in (\ref{Gdef}) may be found on pp.~95--96 of \cite{hughes}, for example.
 We remark that $\frac{1}{n}G_n$ is  the squared {\em radius of gyration} of $x_1, \ldots, x_n$: see e.g.\ \cite{hughes}, p.\ 95.
Note also that calculus verifies the useful variational formula
\begin{equation}
\label{variational}
G_n (x_1, \ldots, x_n ) = \inf_{y \in \R^d} \sum_{i=1}^n \| x_i - y \|^2 .\end{equation}

For $n \geq 2$, define
\[ F_n ( \XX_n ) :=  F_n ( x_1, \ldots, x_n ) := G_{n-1} ( \XX_n'   ) = G_{n-1} (x_{(1)}, \ldots, x_{(n-1)}) .\]

\begin{lm}
\label{lem1}
Let $n \geq 2$ and $\XX_n = ( x_1, x_2, \ldots, x_n )  \in \R^{dn}$. Then for any $x \in \R^d$,
\[ F_n ( x_{(1)}, \ldots, x_{(n-1)}, x ) \leq F_n ( \XX_n ) . \]
\end{lm}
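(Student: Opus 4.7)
The plan is to view the lemma as a statement about the best choice of single-point removal from the $n$-point configuration $\mathcal{Z} := (x_{(1)}, \ldots, x_{(n-1)}, x)$.  Writing $z^*$ for the extreme point of $\mathcal{Z}$ (the member farthest from $\mu_n(\mathcal{Z})$), one has directly from the definitions
\[
F_n(\mathcal{Z}) = G_{n-1}(\mathcal{Z} \setminus \{z^*\}), \qquad F_n(\XX_n) = G_{n-1}(\mathcal{Z} \setminus \{x\}),
\]
the second equality because removing $x$ from $\mathcal{Z}$ restores exactly the original core $(x_{(1)}, \ldots, x_{(n-1)})$.  Thus the lemma reduces to showing that, among the $n$ possible single-point removals, the removal of $z^*$ yields the smallest value of $G_{n-1}$.

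To prove this, I will establish the parallel-axis type identity
\[
G_{n-1}(\mathcal{Z} \setminus \{y_i\}) \;=\; G_n(\mathcal{Z}) \;-\; \frac{n}{n-1}\, \| y_i - \mu_n(\mathcal{Z}) \|^2, \qquad 1 \le i \le n,
\]
whose right-hand side is a strictly decreasing function of $\| y_i - \mu_n(\mathcal{Z}) \|$.  By definition of $z^*$, this distance is maximised at the index corresponding to $z^*$, giving the required minimum.  The identity itself is a short computation: the barycentre of $\mathcal{Z} \setminus \{y_i\}$ equals $(n\mu_n(\mathcal{Z}) - y_i)/(n-1)$, so its offset from $\mu_n(\mathcal{Z})$ is $(\mu_n(\mathcal{Z}) - y_i)/(n-1)$; applying the classical decomposition $\sum \|\cdot - \mu\|^2 = \sum \|\cdot - \bar{x}\|^2 + n\|\mu - \bar{x}\|^2$ (Huygens/parallel-axis) to the $n-1$ points $\{ y_j : j \ne i \}$ with reference point $\mu_n(\mathcal{Z})$, and noting that $\sum_{j \ne i}\|y_j - \mu_n(\mathcal{Z})\|^2 = G_n(\mathcal{Z}) - \|y_i - \mu_n(\mathcal{Z})\|^2$, delivers the formula after one cancellation.

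I do not expect any genuine obstacle: the argument is essentially a routine application of the parallel-axis theorem, and the main conceptual point is simply that the definition of the core picks out the size-$(n-1)$ subset of $\mathcal{Z}$ on which $G_{n-1}$ is smallest, so any other such subset --- in particular the one obtained by discarding $x$ --- yields a value at least as large.  Comparing the $y_i = z^*$ and $y_i = x$ cases of the identity then completes the argument, with equality precisely when $x$ is already the extreme point of $\mathcal{Z}$.
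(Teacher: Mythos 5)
Your proposal is correct and arrives at the same final formula as the paper, namely
\[
F_n(\mathcal{Z}) - F_n(\XX_n) = \frac{n}{n-1}\left(\|x - \mu_n(\mathcal{Z})\|^2 - \|z^* - \mu_n(\mathcal{Z})\|^2\right) \leq 0,
\]
but it gets there by a cleaner, more structural route. The paper's proof tracks the change $F_{\rm new} - F_{\rm old}$ across the update by juggling the three barycentres $\mu'_{\rm old}$, $\mu'_{\rm new}$, $\mu_{\rm new}$ and expanding the quadratics directly, which is somewhat opaque until the last line. You instead observe that both $F_n(\XX_n)$ and $F_n(\mathcal{Z})$ are of the form $G_{n-1}(\mathcal{Z}\setminus\{y_i\})$ for suitable $y_i$ in the \emph{same} $n$-point configuration $\mathcal{Z}$, and then prove the parallel-axis identity
\[
G_{n-1}(\mathcal{Z}\setminus\{y_i\}) = G_n(\mathcal{Z}) - \frac{n}{n-1}\,\|y_i - \mu_n(\mathcal{Z})\|^2.
\]
This makes the lemma a one-line consequence of the definition of the extreme point (it maximizes the subtracted term), and it isolates a reusable identity: removing the $i$th point changes $G$ by an amount depending only on that point's distance to the full barycentre. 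The one blemish is the closing claim that equality holds ``precisely when $x$ is already the extreme point of $\mathcal{Z}$''; equality actually holds when $\|x - \mu_n(\mathcal{Z})\| = \|z^* - \mu_n(\mathcal{Z})\|$, i.e., when $x$ is \emph{tied} for extreme, not necessarily the point selected as extreme under the tie-breaking rule. This is a side remark and does not affect the validity of the argument.
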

\begin{proof}
 For ease of notation, we write simply $(x_1, \ldots, x_n)$ for $(x_{(1)}, \ldots, x_{(n)})$,
 i.e., we relabel so that $x_j$ is the $j$th closest point to $\mu_n ( \XX_n )$. Then $\XX_n^* = x_n$,
 $\XX'_n = (x_1,\ldots, x_{n-1})$, and
\begin{equation}
\label{fold}
F_{\rm old} := F_n (\XX_n )  = G_{n-1} ( x_1, \ldots, x_{n-1} ) = \sum_{i=1}^{n-1} \| x_i -  \mu'_{\rm old} \|^2 ,
 \end{equation}
where
$\mu'_{\rm old} := \mu_{n-1} (\XX_n' )$.
We compare $F_{\rm old}$ to $F_n$ evaluated on the set of points obtained
by removing  $x_n$ and replacing it with some $x \in \R^d$.

Write $y := \{ x_1, \ldots, x_{n-1} , x\}^*$ for the new extreme point. Then
\begin{equation}
\label{fnew}
 F_{\rm new} := F_n ( x_{1}, \ldots, x_{n-1}, x )   = \sum_{i=1}^{n-1} \| x_i - \mu'_{\rm new} \|^2
+ \| x-  \mu'_{\rm new} \|^2 - \| y-  \mu'_{\rm new} \|^2 ,
\end{equation}
where
\begin{equation}
\label{munew}
 \mu'_{\rm new} := \frac{1}{n-1} \left( \sum_{i=1}^{n-1}x_i + x - y \right) =  \mu'_{\rm old} + \frac{x-y}{n-1} .
\end{equation}
Denote $\mu_{\rm new} := \mu_n (x_1, \ldots, x_{n-1}, x)$, so
\begin{equation}
\label{eq3}
 \mu'_{\rm new}=\frac{n \mu_{\rm new}}{n-1}-\frac y{n-1}.
\end{equation}
From (\ref{fold}), (\ref{fnew}), and (\ref{munew}),  we obtain
\begin{align}
\label{eq22}
 F_{\rm new}   - F_{\rm old}
  & = \sum_{i=1}^{n-1} \left( \|x_i -  \mu'_{\rm new} \|^2 - \|x_i -  \mu'_{\rm old} \|^2 \right) + \|x - \mu'_{\rm new} \|^2 - \|y -  \mu'_{\rm new} \|^2 .
\end{align}
For the sum on the right-hand side of (\ref{eq22}), we have that
\begin{align*}
\sum_{i=1}^{n-1} \left( \|x_i -  \mu'_{\rm new} \|^2 - \|x_i -  \mu'_{\rm old} \|^2 \right) & = \sum_{i=1}^{n-1}
\left( 2 x_i \cdot ( \mu'_{\rm old} -  \mu'_{\rm new} )    + \|  \mu'_{\rm new} \|^2 - \| \mu'_{\rm old} \|^2 \right) \\
&  = (n-1) \left( 2  \mu'_{\rm old} \cdot ( \mu'_{\rm old} -  \mu'_{\rm new} ) + \|  \mu'_{\rm new} \|^2 - \| \mu'_{\rm old} \|^2 \right) \\
& = (n-1) \left(  \|  \mu'_{\rm old} \|^2  - 2 ( \mu'_{\rm old} \cdot  \mu'_{\rm new} ) + \|  \mu'_{\rm new} \|^2  \right).
\end{align*}
Simplifying this last expression and substituting back into (\ref{eq22}) gives $F_{\rm new} - F_{\rm old}
= (n-1) \|  \mu'_{\rm old} -  \mu'_{\rm new} \|^2 +  \|x -  \mu'_{\rm new}\|^2 -
\|y -  \mu'_{\rm new} \|^2$. Thus, using (\ref{munew}) and then (\ref{eq3}),
\begin{align*}
F_{\rm new} - F_{\rm old}
 &
= \frac{\|x-y\|^2}{n-1}
 +\|x\|^2-\|y\|^2 -  2  \mu'_{\rm new} \cdot (x-y)
\\
&= \frac{\|x\|^2+\|y\|^2-2x\cdot y}{n-1}
 +\|x\|^2-\|y\|^2 -  2\left(\frac{n\mu_{\rm new}}{n-1}-\frac y{n-1}\right) \cdot (x-y). \end{align*}
 Hence we conclude that
 \begin{align}
 \label{Fchange}
 F_{\rm new} - F_{\rm old}
& = \frac{n}{n-1} \left( \| x \|^2 - \|y\|^2 - 2 \mu_{\rm new} \cdot (x-y) \right)\nonumber \\
&=\frac n{n-1} \left(\|x-\mu_{\rm new}\|^2-\|y-\mu_{\rm new}\|^2\right) \leq 0,
\end{align}
since $y$ is, by definition, the farthest point from $\mu_{\rm new}$.
\end{proof}

Consider $F(t) := F_N( \XX_N (t) )$. Lemma \ref{lem1} has the following immediate consequence.

\begin{cor}
\label{cor1}
Let $N \geq 2$. Then $F (t+1) \leq F (t)$.
\end{cor}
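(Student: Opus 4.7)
The plan is to apply Lemma~\ref{lem1} directly; the dynamics of the process are precisely the operation analysed in that lemma, namely removal of the extreme point followed by insertion of a new point. So no new computation is required for Corollary~\ref{cor1}: the work has all been done.

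Concretely, writing $\XX_N(t) = (X_{(1)}(t),\ldots,X_{(N)}(t))$ in barycentric order, the definition of the Markov chain gives
\[
\XX_N(t+1) = \ord\bigl( X_{(1)}(t),\ldots,X_{(N-1)}(t),\, U_{t+1}\bigr),
\]
and since $F_N$ depends only on the underlying unordered configuration (in particular it is symmetric in its arguments), we have
\[
F(t+1) = F_N\bigl( X_{(1)}(t),\ldots, X_{(N-1)}(t), U_{t+1} \bigr).
\]
Applying Lemma~\ref{lem1} with the role of $\XX_n$ played by $\XX_N(t)$ and the free point $x$ taken to be $U_{t+1}$ then yields $F(t+1) \leq F_N(\XX_N(t)) = F(t)$ almost surely, pointwise on any realisation of $U_{t+1}$.

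The boundary case $N=2$ is degenerate and warrants only a brief separate remark: the core of a two-point configuration has a single element, so $F_2 \equiv G_1 \equiv 0$ and the inequality is trivial. In summary, there is really no obstacle here; the substantive content was contained in Lemma~\ref{lem1}, and Corollary~\ref{cor1} merely reads off its pathwise consequence along the trajectory of $(\XX_N(t))_{t \geq 0}$, with the crucial feature that monotonicity holds sample-wise rather than only in expectation.
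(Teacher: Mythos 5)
Your proof is correct and is exactly the paper's approach: the paper simply asserts Corollary~\ref{cor1} as an ``immediate consequence'' of Lemma~\ref{lem1}, and your spelling-out of that consequence (identifying $F(t+1)$ with $F_N(X_{(1)}(t),\ldots,X_{(N-1)}(t),U_{t+1})$ and invoking the lemma pathwise) is precisely what is meant. The aside about $N=2$ is fine but unnecessary, since Lemma~\ref{lem1} already covers $n=2$.
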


Corollary \ref{cor1} shows that our Lyapunov function $F(t)$ is nonincreasing; later we show that $F(t) \to 0$ a.s.\ (see Lemma \ref{lem4} below).
First, we need to relate $F(t)$ to the {\em diameter} of the point set $\XX'_N(t)$.
For $n \geq 2$ and $x_1, \ldots, x_n \in \R^d$, write
\[ D_n ( x_1, \ldots, x_n ) := \max_{1 \leq i,j \leq n} \| x_i - x_j \|. \]

\begin{lm}
\label{lem2}
Let $n \geq 2$ and $x_1, \ldots, x_n \in \R^d$. Then
\[ \frac{1}{2} D_n ( x_1, \ldots, x_n )^2 \leq G_n (x_1, \ldots, x_n ) \leq \frac{1}{2}  (n-1) D_n ( x_1, \ldots, x_n )^2 .\]
\end{lm}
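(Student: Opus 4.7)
The plan is to prove the two inequalities separately, each using one of the two equivalent expressions for $G_n$ given in (\ref{Gdef}).

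For the upper bound, I would work with the double-sum formula $G_n = n^{-1} \sum_{i=1}^n \sum_{j=1}^{i-1} \| x_i - x_j \|^2$. Since every term $\|x_i - x_j\|^2$ is at most $D_n^2$ by definition of the diameter, and there are exactly $\binom{n}{2} = n(n-1)/2$ such terms, the double sum is bounded by $\tfrac12 n(n-1) D_n^2$. Dividing by $n$ yields $G_n \le \tfrac12 (n-1) D_n^2$.

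For the lower bound, I would use the variance-type representation $G_n = \sum_{i=1}^n \|x_i - \mu_n (\XX_n)\|^2$. Choose indices $i^\ast, j^\ast$ attaining the diameter, so that $\|x_{i^\ast} - x_{j^\ast}\| = D_n$. Writing $a = x_{i^\ast} - \mu_n(\XX_n)$ and $b = x_{j^\ast} - \mu_n(\XX_n)$, the parallelogram identity $\|a-b\|^2 + \|a+b\|^2 = 2\|a\|^2 + 2\|b\|^2$ gives $\|a-b\|^2 \le 2\|a\|^2 + 2\|b\|^2$. Applied to our $a,b$ this reads
\[ D_n^2 = \| x_{i^\ast} - x_{j^\ast} \|^2 \le 2 \| x_{i^\ast} - \mu_n(\XX_n) \|^2 + 2 \| x_{j^\ast} - \mu_n(\XX_n) \|^2 \le 2 \sum_{i=1}^n \| x_i - \mu_n(\XX_n) \|^2 = 2 G_n, \]
where the penultimate inequality just drops the remaining nonnegative squared terms. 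Rearranging gives $G_n \ge \tfrac12 D_n^2$.

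There is no real obstacle here: the statement is a standard comparison between the ``radius of gyration'' and the diameter, and each side follows from a one-line estimate once the appropriate formula from (\ref{Gdef}) is selected. If anything, the only thing worth noting is that both bounds are sharp (upper bound by placing $n/2$ points at each of two antipodal locations, lower bound by placing $n-1$ points at one location and one at the other when $n=2$), so the factor $(n-1)$ in the upper bound cannot be improved; but this is not needed for the proof.
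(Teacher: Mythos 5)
Your proof of the lemma is correct and essentially matches the paper's. The upper bound argument is identical: bound each of the $\binom{n}{2}$ squared pairwise distances by $D_n^2$ in the double-sum form of $G_n$. For the lower bound the paper also picks out a diametral pair and drops the remaining nonnegative terms, but then invokes the $n=2$ case of the variational formula (\ref{variational}) to get $\|x_{i^\ast}-\mu\|^2 + \|x_{j^\ast}-\mu\|^2 \geq \frac{1}{2}\|x_{i^\ast}-x_{j^\ast}\|^2$, whereas you use the parallelogram identity; these are the same one-line computation in slightly different packaging, with your version being a touch more self-contained since it does not lean on (\ref{variational}).

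Your closing sharpness remark, however, is incorrect. Placing $n/2$ points at each of two antipodal locations gives $G_n = n D_n^2/4$, which is strictly less than $\frac{1}{2}(n-1)D_n^2$ for $n>2$, so this example does not attain the upper bound, and in fact the factor $(n-1)$ \emph{can} be improved. The paper's own remark following the lemma states explicitly that the upper bound is not in general sharp and records the better bound $G_n \leq \frac{n}{2}\cdot\frac{d}{d+1}D_n^2$ of Witsenhausen (with the extremal configurations characterized via regular simplices). Only the lower bound is sharp, attained by collinear configurations with a diametral pair and the remaining $n-2$ points at the midpoint. Since this aside is not part of the proof of the stated inequalities, the lemma itself is still correctly established.
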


\begin{rmk}
The lower bound in Lemma \ref{lem2} is sharp, and is attained by collinear configurations with two diametrically opposed points $x_i, x_j$ and all the other $n-2$ points
at the midpoint $\mu_2 (x_i,x_j) = \mu_n (x_1, \ldots, x_n)$. The upper bound in Lemma \ref{lem2} is not, in general, sharp; determining the sharp
upper bound is a nontrivial problem. The bound $G_n (x_1, \ldots, x_n ) \leq \frac{n}{2} \left(\frac{d}{d+1} \right)  D_n ( x_1, \ldots, x_n )^2$
\cite{wit} is also not always sharp. Witsenhausen \cite{wit}
 conjectured that the maximum is attained if and only if the points are
distributed as evenly as possible among the vertices of a regular $d$-dimensional
simplex of edge-length $D_n(x_1, \ldots, x_n)$; this conjecture was proved relatively recently \cite{pill,bm}.
\end{rmk}

\begin{proof}[Proof of Lemma \ref{lem2}.]
Fix $x_1, \ldots, x_n \in \R^d$. For ease of notation, write $\mu = \mu_n (x_1, \ldots, x_n)$.
First we prove the lower bound. For $n \geq 2$, using the second form of $G_n$ in (\ref{Gdef}),
\begin{align*} G_n (x_1, \ldots, x_n ) & = \sum_{i=1}^n \| x_i - \mu \|^2 \geq \| x_i - \mu \|^2 + \| x_j - \mu \|^2 ,\end{align*}
where $(x_i,x_j)$ is a diameter, i.e., $D_n(x_1,\ldots,x_n ) = \| x_i - x_j \|$.
By the $n=2$ case of (\ref{variational}),
\[  \| x_i - \mu \|^2 + \| x_j - \mu \|^2 \geq 2 \| x_i - \mu_2 (x_i, x_j ) \|^2 = \frac{1}{2} \| x_i - x_j \|^2 .\]
This gives the lower bound. For the upper bound,  from the first form of $G_n$ in (\ref{Gdef}),
\[ G_n (x_1, \ldots, x_n ) \leq \frac{1}{n} \sum_{i=1}^n (i-1) D_n (x_1, \ldots, x_n)^2 ,\]
by the definition of $D_n$, which yields the result.
\end{proof}

Let $D(t) := D_{N-1} ( \XX_N ' (t))$.

\begin{rmk}
\label{rmk1}
By Lemma \ref{lem2} (or (\ref{Gdef})), $G_2 ( \XX_3'(t)) = \frac{1}{2} D_2 ( \XX_3' (t))^2$,
so when $N=3$, Lemma \ref{lem1} implies that $D (t+1) \leq D (t)$ a.s.\ as well.
If $d=1$, it can  be shown that $D (t)$ is nonincreasing also when $N=4$. In general, however, $D (t)$ can increase.
\end{rmk}

Let $\FF_t := \sigma ( \XX_N(0), \XX_N(1), \ldots, \XX_N(t) )$, the $\sigma$-algebra generated by the process up to time $t$.
 Let $B ( x; r)$
denote the closed Euclidean $d$-ball with centre $x \in \R^d$ and radius $r >0$. Define the events
\[ A_{t+1} := \{ U_{t+1} \in B ( \mu_{N-1} ( \XX_N' (t) )  ;  3 D (t) ) \}, ~~
A'_{t+1} := \{ U_{t+1} \in B ( \mu_{N-1} ( \XX_N' (t) )  ; D(t) /4 ) \}. \]

\begin{lm}
\label{lem3}
There is an absolute constant $\gamma >0$ for which, for all $N \geq 3$ and all $t$,
\begin{equation}
\label{events}
A'_{t+1} \subseteq \{ F(t+1) - F(t) \leq - \gamma N^{-1} F(t) \} \subseteq \{ F(t+1) - F(t) < 0 \} \subseteq   A_{t+1}   .\end{equation}
Moreover, there exist constants $c >0$ and $C<\infty$, depending only on $d$, for which, for all $N \geq 3$ and all $t$, a.s.,
\begin{align}
\label{eq5}
 \Pr \left[ F (t+1) - F (t) \leq - \gamma N^{-1} F (t) \mid \FF_t \right] & \geq
 \Pr [ A'_{t+1} \mid \FF_t ] \geq  c N^{-d/2} ( F(t) )^{d/2} ; \\
\label{eq6}
 \Pr \left[ F(t+1) - F(t) < 0 \mid \FF_t \right] & \leq
 \Pr [ A_{t+1} \mid \FF_t ] \leq C  ( F(t) )^{d/2} .\end{align}
\end{lm}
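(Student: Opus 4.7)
The plan is to exploit the identity
\[ F(t+1) - F(t) = \frac{N}{N-1}\bigl(\|U_{t+1} - \mu_{\rm new}\|^2 - \|y-\mu_{\rm new}\|^2\bigr) \]
derived in the proof of Lemma \ref{lem1}, where $\mu_{\rm new} := \mu_N(\XX'_N(t),U_{t+1})$ and $y$ is the point of $\{\XX'_N(t), U_{t+1}\}$ farthest from $\mu_{\rm new}$. In particular $F(t+1) = F(t)$ precisely when $y = U_{t+1}$, i.e.\ when the newly arrived point is itself immediately rejected; the three inclusions in (\ref{events}) then reduce to geometric conditions on $U_{t+1}$ relative to $\mu'_{\rm old} := \mu_{N-1}(\XX'_N(t))$.

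Two elementary facts drive the inclusions: every core point lies within $D(t)$ of $\mu'_{\rm old}$ (the centroid of a set of diameter $D(t)$ sits in its convex hull), and $\|U_{t+1} - \mu_{\rm new}\| = (1 - N^{-1})\|U_{t+1} - \mu'_{\rm old}\|$, since $\mu_{\rm new} = \mu'_{\rm old} + N^{-1}(U_{t+1} - \mu'_{\rm old})$. For the rightmost inclusion of (\ref{events}), if $U_{t+1}\notin A_{t+1}$ so $\|U_{t+1} - \mu'_{\rm old}\| > 3D(t)$, a triangle inequality then gives $\|U_{t+1} - \mu_{\rm new}\| > \|x - \mu_{\rm new}\|$ for every core point $x$ whenever $N \geq 3$, hence $y = U_{t+1}$ and $F$ is unchanged. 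For the leftmost inclusion, if $U_{t+1} \in A'_{t+1}$ then $\|U_{t+1} - \mu_{\rm new}\| \leq D(t)/4$, whereas a diametral pair of core points forces some core point to be at distance $\geq D(t)/2$ from $\mu_{\rm new}$; substituting into the identity gives a drop of order $D(t)^2/N$, and the upper bound $F(t) \leq \tfrac12(N-2)D(t)^2$ from Lemma \ref{lem2} converts this into $\leq -\gamma N^{-1} F(t)$ for an absolute $\gamma >0$ (tracking constants gives, for example, $\gamma = 3/8$). The middle inclusion is immediate since $F(t) \geq 0$.

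For the probability bounds both sides reduce to estimating the $U[0,1]^d$-mass of closed Euclidean balls centred at $\mu'_{\rm old}$. For (\ref{eq6}), ignoring the cube yields $\Pr[A_{t+1}\mid\FF_t] \leq \omega_d (3D(t))^d$ (with $\omega_d$ the unit-ball volume), and substituting $D(t)^2 \leq 2 F(t)$ from Lemma \ref{lem2} gives the required $C F(t)^{d/2}$. For (\ref{eq5}), since $\mu'_{\rm old} \in [0,1]^d$ as a convex combination of core points, the orthant of $B(\mu'_{\rm old}; D(t)/4)$ pointing towards the centre of the cube lies inside $[0,1]^d$ (capping the radius at a $d$-dependent constant covers the case $D(t)>2$, possible only for $d \geq 5$), yielding $\Pr[A'_{t+1}\mid\FF_t] \geq c_d D(t)^d$; the lower bound $D(t)^2 \geq 2F(t)/(N-2) \geq F(t)/N$ from Lemma \ref{lem2} then gives the claimed $cN^{-d/2}F(t)^{d/2}$. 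The outer inequalities in (\ref{eq5}) and (\ref{eq6}) follow at once from the inclusions.

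The main technical obstacle is the bookkeeping in the leftmost inclusion: the separation estimates $\|y - \mu_{\rm new}\| \geq D(t)/2$ and $\|U_{t+1}-\mu_{\rm new}\|\leq D(t)/4$ must be balanced against both the $N/(N-1)$ prefactor from the identity and the factor $(N-2)$ from Lemma \ref{lem2}, and this has to be done carefully to yield a genuinely absolute $\gamma$ uniform in $N \geq 3$. The remaining steps are a routine combination of the Lyapunov identity from the proof of Lemma \ref{lem1}, Lemma \ref{lem2}, and the standard orthant volume estimate.
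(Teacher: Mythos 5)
Your proposal is correct and follows the same strategy as the paper: use the identity $F(t+1)-F(t)=\tfrac{N}{N-1}\bigl(\|U_{t+1}-\mu_{\rm new}\|^2-\|y-\mu_{\rm new}\|^2\bigr)$ from the proof of Lemma \ref{lem1}, show the three inclusions geometrically relative to $\mu'_{\rm old}$ and the core diameter, and then convert $D(t)$ to $F(t)^{1/2}$ via Lemma \ref{lem2}. Two small points of comparison. First, for the leftmost inclusion you lower-bound the farthest core point's distance from $\mu_{\rm new}$ by $D/2$ directly from a diametral pair (one of the two endpoints is always at distance $\geq D/2$ from any $z$), which is cleaner than the paper's route of first getting $\|X_i-\mu'_{\rm old}\|\geq D/2$ and then subtracting the $\|\mu_{\rm new}-\mu'_{\rm old}\|\leq D/12$ correction to land on $5D/12$; your version yields the nicer constant $\gamma=3/8$. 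Second, you make explicit the boundary-of-cube issue in the lower bound $\Pr[A'_{t+1}\mid\FF_t]\geq c_dD(t)^d$, using the orthant of the ball pointing toward the cube centre and a radius cap for large $D$; the paper simply asserts the existence of $\theta_d>0$ with $\Pr[A'_{t+1}\mid\FF_t]\geq\theta_dD^d$, leaving the same argument implicit. One pedantic remark: the middle inclusion is ``immediate since $F(t)\geq 0$'' only when $F(t)>0$; when $F(t)=0$ the left event is always satisfied while the right never is, so strictly speaking the inclusion fails on the (null, for distinct initial points) event $\{F(t)=0\}$. The paper's statement has exactly the same blemish, so this is not a gap in your argument relative to what you were asked to prove.
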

\begin{proof}
For simplicity we write $X_1, \ldots, X_{N-1}$ instead of $X_{(1)} (t), \ldots, X_{(N-1)} (t)$
 and  $D$ instead of $D(t) = D_{N-1} (X_1, \ldots, X_{N-1})$.
By definition of $D$, there exists some $i \in \{1,\ldots, N-1\}$
such that $\|  \mu'_{\rm old} - X_i \| \geq D/2$, where
$ \mu'_{\rm old} = \mu_{N-1} ( X_{1}, \ldots, X_{N-1} )$.
Given $\FF_t$, the event $A'_{t+1}$, that the new point $U := U_{t+1}$ falls in $B ( \mu'_{\rm old} ; D/4)$, has probability
bounded below by $\theta_d D^d$, where $\theta_d>0$ depends only on $d$.
Let $\mu_{\rm new} := \mu_N ( X_1, \ldots, X_{N-1}, U )$. Suppose that $A'_{t+1}$ occurs.
Then, 
\begin{equation}
\label{eq0} \| \mu_{\rm new} -  \mu'_{\rm old} \| = \frac{1}{N} \| U -  \mu'_{\rm old} \| \leq \frac{D}{4N} \leq \frac{D}{12} ,\end{equation}
since $N \geq 3$.
Hence, by (\ref{eq0}) and the triangle inequality,
\begin{equation}
\label{eq1}
 \| U - \mu_{\rm new} \| \leq \| U -  \mu'_{\rm old} \| + \| \mu_{\rm new} - \mu'_{\rm old} \|
\leq \frac{D}{4} + \frac{D}{12} = \frac{4D}{12} .\end{equation}
On the other hand, by another application of the triangle inequality and (\ref{eq0}),
\[ \| \mu_{\rm new} - X_i \| \geq \| \mu'_{\rm old} - X_i \| - \| \mu_{\rm new} - \mu'_{\rm old} \|
\geq \frac{D}{2} - \frac{D}{12} = \frac{5D}{12} .\]
Then, by definition, the extreme point $Y := \{ X_1, \ldots, X_{N-1}, U \}^*$ satisfies
\begin{equation}
\label{eq98}
 \| Y - \mu_{\rm new} \|  \geq \| \mu_{\rm new} - X_i \| \geq \frac{5D}{12} .\end{equation}
Hence from the $x=U$ case of (\ref{Fchange}) with the bounds (\ref{eq1}) and (\ref{eq98}), we conclude that
\begin{equation}
 \label{eq2}
  F (t+1) - F(t)   \leq  \frac{N}{N-1} \left( \left( \frac{4D}{12} \right)^2 - \left( \frac{5D}{12} \right)^2 \right) \1 (A'_{t+1} ) \leq - 
  \frac{9}{144} D^2 \1 (A'_{t+1} )  ,\end{equation}
for  
all $N \geq 3$; the first inclusion in (\ref{events})  follows (with $\gamma = 9/72$)
from (\ref{eq2}) together with the fact that,
by the second inequality in Lemma \ref{lem2}, $D^2 \geq 2 N^{-1} F(t)$. This in turn implies (\ref{eq5}), using the fact that $\Pr [ A'_{t+1} \mid \FF_t ] \geq \theta_d D^d$.

Next we consider the event $A_{t+1}$. Using the same notation as above, we have that
\[ \| \mu_{\rm new} - U \| \geq \| \mu'_{\rm old} - U \| - \| \mu_{\rm new} - \mu'_{\rm old}\|
= \left( 1 - \frac{1}{N} \right) \| \mu'_{\rm old} - U \| ,\]
by the equality in (\ref{eq0}).
Also, for any $k \in \{1, \ldots, N-1\}$,
\[ \| \mu_{\rm new} - X_k \| \leq \| \mu'_{\rm old} - X_k \| + \| \mu'_{\rm old} - \mu_{\rm new} \|
\leq D + \frac{1}{N} \| \mu'_{\rm old} - U \| ,\]
by (\ref{eq0}) again. Combining these estimates we obtain, for any $k \in \{1,\ldots,N-1\}$,
\[ \| \mu_{\rm new} - U \| - \| \mu_{\rm new} - X_k \| \geq \left( 1 - \frac{2}{N} \right) \| \mu'_{\rm old} - U \| - D
\geq \frac{1}{3} \| \mu'_{\rm old} - U \| - D,\]
for $N \geq 3$.
So in particular, $\| \mu_{\rm new} - U \| > \| \mu_{\rm new} - X_k \|$ for all $k \in \{1,\ldots, N-1\}$
provided $\| \mu'_{\rm old} - U \| > 3D$, i.e., $U \notin B ( \mu'_{\rm old} ; 3 D)$.
In this case, $U$ is the extreme point among
 $U, X_1, \ldots, X_{N-1}$,  i.e.,
\begin{equation}
\label{newextreme}
A_{t+1}^{\rm c} \subseteq \{ \XX_N^* (t+1) = U_{t+1} \} .\end{equation}
In particular, on $A_{t+1}^{\rm c}$, $F(t+1) = F(t)$, and
 $F(t+1) < F(t)$ only if $A_{t+1}$ occurs, giving the final inclusion in (\ref{events}). Since $\Pr [ A_{t+1} \mid \FF_t ]$ is
 bounded above by $C_d D^d$ for a constant $C_d<\infty$ depending only on $d$,
 (\ref{eq6}) follows from the first inequality in Lemma \ref{lem2}.
\end{proof}

\begin{lm}
\label{lem4}
Suppose that $N \geq 3$. Then, as $t\to\infty$, $F(t) \to 0$ a.s.\ and in $L^2$.
\end{lm}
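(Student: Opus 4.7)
The plan is to combine the monotonicity from Corollary \ref{cor1} with the ``forced decrease'' estimate from Lemma \ref{lem3} via a conditional Borel--Cantelli argument. Since $F(t) \geq 0$ and, by Corollary \ref{cor1}, $F(t+1) \leq F(t)$ a.s., the limit $F_\infty := \lim_{t \to \infty} F(t)$ exists a.s.\ and is nonnegative. The main claim is that $F_\infty = 0$ a.s.

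To prove this, I would argue by contradiction: suppose $\Pr [ F_\infty > 0 ] > 0$. Then there exist $\eps > 0$ and an event $E \in \FF_\infty$ with $\Pr [E] >0$ on which $F(t) \geq \eps$ for all $t$ (by monotonicity of $F$, one can take $E = \{ F_\infty \geq \eps \}$ for a suitable $\eps$). By (\ref{eq5}) of Lemma \ref{lem3}, setting $B_t := \{ F(t+1) - F(t) \leq - \gamma N^{-1} F(t) \}$, on the event $\{ F(t) \geq \eps \}$ we have
\[ \Pr [ B_t \mid \FF_t ] \geq c N^{-d/2} ( F(t) )^{d/2} \geq c N^{-d/2} \eps^{d/2} , \]
which is a strictly positive constant. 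Hence, on $E$, $\sum_{t \geq 0} \Pr [ B_t \mid \FF_t ] = \infty$ a.s., so by the conditional Borel--Cantelli lemma (L\'evy's extension) the events $B_t$ occur infinitely often a.s.\ on $E$. But each occurrence of $B_t$ contracts $F$ by a factor $(1 - \gamma / N) < 1$, which, combined with the nonincreasing property, would drive $F(t) \to 0$ on $E$, contradicting $F(t) \geq \eps$ on $E$. Therefore $F_\infty = 0$ a.s.

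For $L^2$ convergence, I would note that the process lives in $[0,1]^d$, so $\| x_i - x_j \| \leq \sqrt{d}$ for all pairs of points; from the first form of $G_{N-1}$ in (\ref{Gdef}), one then has the uniform deterministic bound
\[ 0 \leq F(t) \leq \tfrac{1}{2}(N-1) d \quad \text{for all } t . \]
Since $F(t) \to 0$ a.s.\ and $F(t)^2$ is uniformly bounded by $((N-1)d/2)^2$, the dominated convergence theorem yields $\Exp [ F(t)^2 ] \to 0$, i.e., $F(t) \to 0$ in $L^2$.

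The only real obstacle is the first part, and it is minor: the conditional Borel--Cantelli step must be applied carefully because the lower bound on $\Pr [ B_t \mid \FF_t ]$ only holds on the event where $F(t)$ stays bounded away from zero. Working on the event $\{ F_\infty \geq \eps \}$ and exploiting the fact that $F$ is nonincreasing (so $F(t) \geq F_\infty \geq \eps$ for all $t$ on that event) circumvents this smoothly. The rest is routine.
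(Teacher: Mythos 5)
Your argument is correct, and it takes a genuinely different route from the paper. You invoke L\'evy's conditional Borel--Cantelli lemma: on $E = \{F_\infty \geq \eps\}$, monotonicity forces $F(t) \geq \eps$ for all $t$, so (\ref{eq5}) gives a uniform positive lower bound on $\Pr[B_t \mid \FF_t]$, the conditional probabilities sum to $+\infty$, hence $B_t$ occurs infinitely often on $E$, and each occurrence contracts $F$ by the fixed factor $1-\gamma/N$, which (again with monotonicity) drives $F(t) \to 0$ on $E$ and forces $\Pr[E]=0$. The paper instead runs a supermartingale-type drift argument: it introduces the stopping time $\sigma = \min\{t : F(t) \leq \eps\}$, observes from (\ref{eq5}) and $F(t+1)\le F(t)$ that $\Exp[F(t+1)-F(t)\mid\FF_t] \leq -\delta^2\1\{t<\sigma\}$, takes expectations, uses the (already established) a.s.\ and $L^2$ convergence of $F(t)$ to a limit, and concludes $\Pr[\sigma>t]\to 0$, hence $\sigma<\infty$ a.s.\ and $F$ eventually drops below $\eps$. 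Both hinge on exactly the same inputs --- Corollary \ref{cor1} plus (\ref{eq5}) --- and both are short; the paper's version is slightly more elementary in that it needs nothing beyond an expectation computation, whereas yours leans on the conditional Borel--Cantelli lemma (which the paper does use later, in the remark after Lemma \ref{lem5}, so it is within the paper's toolkit). Your version has the minor expository advantage of making the mechanism vivid: good contractions happen infinitely often, so $F$ must vanish. For the $L^2$ part both proofs just observe boundedness; the paper's ``uniformly bounded, so bounded convergence'' is the same as your dominated-convergence step.
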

\begin{proof}
Let $\eps >0$ and let $\sigma := \min \{ t \in \ZP : F (t) \leq \eps \}$, where $\ZP := \{0,1,2,\ldots\}$.
Then by (\ref{eq5}), there exists $\delta >0$ (depending on $\eps$ and $N$) such that,  a.s.,
$\Pr \left[ F (t+1)  - F ( t ) \leq - \delta \mid \FF_t \right] \geq \delta \1  {\{ t < \sigma \}}$.
Hence, since $F(t+1) - F(t) \leq 0$ a.s.\ by Corollary \ref{cor1},
\begin{equation}
\label{eq4} \Exp \left[ F (t+1)  - F ( t ) \mid \FF_t \right] \leq - \delta^2 \1 {\{ t < \sigma \}}. \end{equation}
By Corollary \ref{cor1}, $F(t)$ is   nonnegative  and nonincreasing, and hence $F(t)$ converges a.s.\ as $t \to \infty$ to some nonnegative limit $F(\infty)$;
the convergence also holds in $L^2$ since $F(t)$ is uniformly bounded. In particular, $\Exp [ F(t) ] \to \Exp [F (\infty) ]$.
So taking expectations in (\ref{eq4}) and letting $t \to \infty$ we obtain
\[  \limsup_{t \to \infty}  \delta^2 \Pr [ \sigma > t ] \leq 0 ,\]
which implies that $\Pr [ \sigma > t] \to 0$ as $t \to \infty$. Thus $\sigma < \infty$ a.s., which  together
with the monotonicity of $F(t)$ (Corollary \ref{cor1})
implies that $F(t) \leq \eps$ for all $t$ sufficiently large.
Since $\eps>0$ was arbitrary, the result follows.
\end{proof}

Recall the definition of $A_t$ and $A'_t$ from before Lemma \ref{lem3}.
Define $(\FF_t)$  stopping times   $\tau_0 := 0$ and, for $n \in \N$,   $\tau_n := \min \{ t > \tau_{n-1} :
A_{t} ~\textrm{occurs} \}$. Then $F (t) < F(t-1)$ can only occur if $t = \tau_n$ for some $n$.
Since $\Pr [ A_{t+1} \mid \FF_t ]$ is bounded below by a constant times $D(t)^d$, it is not hard to see that, provided $D(0) > 0$, $A_t$ occurs infinitely often, a.s., so that $\tau_n < \infty$ for all $n$.

\begin{lm}
\label{lem5}
Let $N \geq 3$. There exists $\alpha >0$ such that, a.s., $D(\tau_n) \leq   \re^{-\alpha n}$
for all $n$ sufficiently large.
\end{lm}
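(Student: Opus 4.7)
The strategy is to show that $F(\tau_n)$ decays geometrically in $n$, then invoke $D(\tau_n)^2 \leq 2 F(\tau_n)$ (the easy direction of Lemma \ref{lem2}) to conclude.

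By the third inclusion in (\ref{events}), $F(t)$ can strictly decrease only at times of the form $\tau_m$, and between consecutive $\tau$-times $F$ is constant; in particular $F(\tau_n - 1) = F(\tau_{n-1})$. Thus the analysis reduces to tracking the size of the drop of $F$ at each $\tau_n$. The key observation is that, conditional on $\FF_{\tau_n-1}$ and on $\tau_n = t$, the point $U_t$ is uniform on $B(\mu'_{\rm old}(t-1); 3 D(t-1) ) \cap [0,1]^d$ (since $U_t$ is independent of $\FF_{t-1}$ with $U[0,1]^d$ law, and the conditioning is just the event $A_t$), so the conditional probability that the smaller event $A'_{\tau_n}$ also occurs is the ratio of the two Lebesgue volumes. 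Because $\mu'_{\rm old} \in [0,1]^d$ (it is a convex combination of cube points), this ratio is bounded below by a strictly positive constant $p = p(d)$ depending only on $d$, uniformly in $N$, $n$, and the history; the worst case is $\mu'_{\rm old}$ near a corner of $[0,1]^d$, where the smaller ball still retains at least a $2^{-d}$ fraction of its full Euclidean volume inside the cube, while the larger ball contributes at most a full Euclidean volume.

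Now set $B_n := \{A'_{\tau_n} ~\textrm{occurs}\}$. By the first inclusion in (\ref{events}), on $B_n$ we have $F(\tau_n) \leq (1 - \gamma/N) F(\tau_{n-1})$, while Corollary \ref{cor1} gives $F(\tau_n) \leq F(\tau_{n-1})$ on $B_n^{\rm c}$. Writing $S_n := \sum_{k=1}^n \1_{B_k}$ and iterating,
\[ F(\tau_n) \leq F(\tau_0) (1 - \gamma/N)^{S_n} \as \]
Since $\Pr[B_k \mid \FF_{\tau_k - 1}] \geq p$ for every $k$, the sequence $\1_{B_k} - \Pr[B_k \mid \FF_{\tau_k - 1}]$ consists of bounded martingale differences, so Azuma--Hoeffding combined with Borel--Cantelli gives $\liminf_{n \to \infty} S_n / n \geq p$ a.s. Consequently $F(\tau_n) \leq F(\tau_0) (1-\gamma/N)^{pn/2}$ for all $n$ sufficiently large, and setting $\alpha := -(p/4) \log(1 - \gamma/N) > 0$ yields $D(\tau_n)^2 \leq 2 F(\tau_n) \leq \re^{-2 \alpha n}$ eventually, as required.

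The main obstacle is establishing the uniform lower bound $p = p(d)$ on the conditional probability of $A'_{\tau_n}$ given $A_{\tau_n}$; this requires a careful geometric argument to cope with the cases where $\mu'_{\rm old}$ is close to $\partial[0,1]^d$ and where $D$ may be comparable to the cube's diameter. Once that bound is in hand, the rest of the argument is the deterministic iteration together with the strong-law estimate on $S_n$ sketched above.
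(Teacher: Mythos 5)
Your argument is correct and rests on exactly the same key ingredient as the paper's — the observation that $\Pr[A'_{\tau_n}\mid A_{\tau_n}]$ is bounded below by a positive constant, because conditional on the stopping event $U_{\tau_n}$ is uniform on $B(\mu'_{\rm old};3D)\cap[0,1]^d$ and the volume ratio with $B(\mu'_{\rm old};D/4)\cap[0,1]^d$ is uniformly positive since both balls share a centre inside the cube. The difference is in the concentration step. The paper takes a shorter route: from
\[
\Exp\bigl[F(\tau_n)-F(\tau_{n-1})\mid\FF_{\tau_{n-1}}\bigr]\le-\delta^2 F(\tau_{n-1})
\]
it iterates expectations to get $\Exp[F(\tau_n)]=O(\re^{-cn})$ and then applies Markov's inequality together with Borel--Cantelli, whereas you count the successes $S_n=\sum_{k\le n}\1_{B_k}$, use $F(\tau_n)\le F(\tau_0)(1-\gamma/N)^{S_n}$, and invoke Azuma--Hoeffding to get $\liminf S_n/n\ge p$ a.s. Both routes work; the paper's is slightly more economical (no second-moment or martingale inequality needed), while yours makes explicit the linear-in-$n$ frequency of the favourable events, which is a marginally stronger structural statement but not needed for the lemma.

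One small technical point: you condition on $\FF_{\tau_k-1}$, but $\tau_k-1$ is not a stopping time, so this $\sigma$-algebra is not the natural one in the filtration $(\FF_{\tau_k})_{k\ge0}$. The martingale for Azuma should be built with respect to $\mathcal{G}_{k}:=\FF_{\tau_k}$, using the bound $\Pr[B_k\mid\FF_{\tau_{k-1}}]\ge p$ (which is obtained by the tower property exactly as in the paper). With that adjustment your martingale difference sequence is well defined and the rest of the argument goes through. You also correctly note the remaining ``obstacle'' — the geometric lower bound on the conditional probability — which the paper dispenses with in one line (``by definition of $A_t$ and $A'_t$''); a careful proof is as you sketch, bounding the volume of $B(\mu;r)\cap[0,1]^d$ above and below by constant multiples of $\min(r,1)^d$ for $\mu\in[0,1]^d$.
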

\begin{proof}
 We have from (\ref{eq2}) and the second inequality in Lemma \ref{lem2} that
 \[ F(\tau_n ) - F(\tau_n -1) \leq - \delta   F( \tau_n -1)   \1 ( A'_{\tau_n} )   ,\]
 for some $\delta >0$. Note also that, by definition
 of the stopping times $\tau_n$, $F(\tau_{n} - 1) = F(\tau_{n-1})$. Hence,
 \[ \Pr [ F(\tau_n ) - F(\tau_{n-1} ) \leq - \delta  F( \tau_{n-1}) \mid \FF_{\tau_{n-1}}  ]
 \geq \Pr [ A'_{\tau_n} \mid \FF_{\tau_{n-1}} ]  \geq \delta ,\]
 taking $\delta >0$ small enough, since, using the fact that $\1 (A_{\tau_n} ) =1$ a.s.,
\[ \Pr [ A'_{\tau_n} \mid \FF_{\tau_{n-1}} ] = \Exp \left[ \Pr [ A'_{\tau_n} \mid \FF_{\tau_n} ] \1 ( A_{\tau_n} ) \mid \FF_{\tau_{n-1}} \right]
= \Exp \left[ \Pr [ A'_{\tau_n} \mid A_{\tau_n} ] \mid \FF_{\tau_{n-1}} \right] ,\]
where
 by definition of $A_t$ and $A'_t$, $\Pr [ A'_{\tau_n} \mid A_{\tau_n} ]$ is uniformly positive.
Since $F(t+1) - F(t) \leq 0$ a.s.\ (by Corollary \ref{cor1}) it follows that
\[ \Exp \left[ F (\tau_{n}) - F(\tau_{n-1}) \mid \FF_{\tau_{n-1}}   \right] \leq -\delta^2 F(\tau_{n-1}) .\]
Taking expectations, we obtain $\Exp [ F (\tau_{n} )] \leq (1 -\delta^2 ) \Exp [ F ( \tau_{n-1} ) ]$,
which implies that $\Exp [ F (\tau_n ) ] = O ( \re^{-cn})$,  for some $c>0$ depending on $\delta$.
Then by Markov's inequality, $\Pr [ F( \tau_n ) \geq \re^{-cn/2} ] = O (\re^{-cn/2})$,
which implies that $F(\tau_n) = O ( \re^{-cn/2} )$, a.s., by the Borel--Cantelli lemma.
Then the first inequality in Lemma \ref{lem2} gives the result.
\end{proof}

\begin{rmk}
The proof of Lemma \ref{lem5} shows that $\Pr [ A'_{\tau_n} \mid \FF_{\tau_{n-1}} ]$ is uniformly positive,
so L\'evy's extension of the Borel--Cantelli lemma, with the fact that $\tau_n <\infty$ a.s.\ for all $n$,
shows that $A'_t$ occurs for infinitely many $t$, a.s. With the proof of Lemma \ref{lem3}, this shows that
$\XX_N^* (t) \neq U_t$ infinitely often, as claimed in Remark \ref{rem0}.
\end{rmk}

Now we are almost ready to complete the proof of Theorem \ref{thm1}. We state the main step in the remaining argument
as the first part of the the next lemma, while the second part of the lemma we will need
 in Section \ref{support} below.
For $\eps>0$, define the stopping time $\nu_\eps :=
 \min \{ t \in \N : F ( t ) <  \eps^2   \}$;
for any $\eps>0$, $\nu_\eps<\infty$      a.s.,  by Lemma \ref{lem4}.

\begin{lm}
\label{lem7}
Let $N \geq 3$. Then there exists $\xi_N \in [0,1]^d$ such that
$\mu_{N-1} ( \XX'_N (t) ) \to \xi_N$ a.s.\ and in $L^2$ as $t \to \infty$.
 Moreover, there exists an absolute constant $C$ such that
for any $\eps>0$, and any  $t_0 \in \N$,
on $\{ \nu_\eps \leq t_0 \}$, a.s.,
\[ \Exp \Big[ \max_{t \geq t_0} \left\| \mu_{N-1} ( \XX'_N (t) ) - \mu_{N-1} ( \XX'_N (t_0)) \right\|
\mid \FF_{t_0} \Big] \leq C \eps . \]
\end{lm}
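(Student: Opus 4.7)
The plan is to bound the one-step increments $\Delta M(t) := \mu_{N-1}(\XX'_N(t+1)) - \mu_{N-1}(\XX'_N(t))$ pathwise in terms of $D(t)$, and then use exponential decay of $D$ along the $A$-times to show that $\sum_t \|\Delta M(t)\|$ is a.s.\ finite. This will yield the existence of the limit $\xi_N$, and a quantitative version of the same argument will give the second assertion. From (\ref{munew}), $\Delta M(t) = (U_{t+1} - Y)/(N-1)$, where $Y$ denotes the new extreme point among $\{X_{(1)}(t),\ldots,X_{(N-1)}(t),U_{t+1}\}$. On $A_{t+1}^{\rm c}$, equation (\ref{newextreme}) forces $Y = U_{t+1}$, so $\Delta M(t) = 0$. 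On $A_{t+1}$, the event's definition gives $\|U_{t+1} - \mu_{N-1}(\XX'_N(t))\| \leq 3D(t)$; moreover, $Y$ is either a core point (hence within $D(t)$ of the core's barycentre) or equals $U_{t+1}$, so in either case $\|Y - \mu_{N-1}(\XX'_N(t))\| \leq 3D(t)$. The triangle inequality then gives the key pathwise bound
\[ \|\Delta M(t)\| \leq \frac{6}{N-1}\, D(t)\, \1(A_{t+1}) . \]

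Since $\Delta M(t) \neq 0$ only when $t+1 = \tau_n$ for some $n$, and since the core (hence $D$) is unchanged between $A$-times so that $D(\tau_n - 1) = D(\tau_{n-1})$, the total variation $\sum_t \|\Delta M(t)\|$ is dominated by $\frac{6}{N-1}\sum_{n \geq 0} D(\tau_n)$. Lemma \ref{lem5} supplies $D(\tau_n) \leq \re^{-\alpha n}$ a.s.\ for all $n$ sufficiently large, so this sum is a.s.\ finite. Consequently $M(t) := \mu_{N-1}(\XX'_N(t))$ is almost surely Cauchy and converges to a random $\xi_N$; convexity keeps $M(t)$, and hence the limit, in $[0,1]^d$. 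The $L^2$ statement is then immediate from bounded convergence, since $\|M(t) - \xi_N\|^2 \leq d$.

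For the quantitative second assertion, restart the clock at $t_0$: set $\rho_0 := t_0$ and $\rho_n := \min\{t > \rho_{n-1} : A_t \textrm{ occurs}\}$, so $D(\rho_n - 1) = D(\rho_{n-1})$. Then
\[ \max_{t \geq t_0}\|M(t) - M(t_0)\| \leq \sum_{t \geq t_0}\|\Delta M(t)\| \leq \frac{6}{N-1}\sum_{n \geq 0} D(\rho_n) . \]
The drift estimate in the proof of Lemma \ref{lem5} transfers directly to the $\rho$-times, giving $\Exp[F(\rho_n) \mid \FF_{\rho_{n-1}}] \leq (1-\delta^2) F(\rho_{n-1})$ for some $\delta >0$ that arises as a ratio of ball volumes and hence depends only on $d$. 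The tower property then gives $\Exp[F(\rho_n) \mid \FF_{t_0}] \leq (1-\delta^2)^n F(t_0)$. Combining with Jensen's inequality and the first inequality of Lemma \ref{lem2} yields $\Exp[D(\rho_n) \mid \FF_{t_0}] \leq \sqrt{2(1-\delta^2)^n F(t_0)}$. On $\{\nu_\eps \leq t_0\}$ we have $F(t_0) < \eps^2$, so summing the resulting geometric series over $n \geq 0$ (using Fubini to swap sum and conditional expectation) produces the claimed bound $C\eps$, with $C$ depending only on $d$.

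The main technical point to confirm is that the Lyapunov-drift argument of Lemma \ref{lem5} transfers cleanly when restarted at the deterministic time $t_0$: the $\rho_n$ must be a.s.\ finite (valid whenever $D(t_0) > 0$, else the assertion is trivial) and the one-step probability lower bound $\Pr[A'_{\rho_n} \mid A_{\rho_n}] \geq (1/12)^d$ must be uniform in $N$. Both are direct consequences of what has already been proved, and together they ensure that the resulting constant $C$ depends only on $d$, not on $N$.
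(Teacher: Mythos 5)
Your argument for the first assertion is correct but takes a genuinely different, more explicit route than the paper's. You compute the increment $\Delta M(t) = (U_{t+1} - Y)/(N-1)$ directly from the displacement formula (\ref{munew}), observe it vanishes on $A_{t+1}^{\rm c}$ (via (\ref{newextreme})) and is at most $6D(t)/(N-1)$ on $A_{t+1}$, and then sum. The paper instead selects a point $Z(t)$ common to $\XX'_N(t)$ and $\XX'_N(t-1)$ and uses $\|\mu'(t)-\mu'(t-1)\| \leq \|\mu'(t)-Z(t)\| + \|Z(t)-\mu'(t-1)\| \leq D(t) + D(t-1)$. Both yield a.s.\ summability via Lemma \ref{lem5}. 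Your bound carries an extra factor $1/(N-1)$ that the paper's does not, which is an advantage (see below).

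In the second part there is a genuine error. You assert that $\Exp[F(\rho_n)\mid\FF_{\rho_{n-1}}] \leq (1-\delta^2)F(\rho_{n-1})$ with $\delta$ ``arising as a ratio of ball volumes and hence depending only on $d$.'' That is not correct: the one-step decrement rate is the product of two quantities, the conditional probability $\Pr[A'_{\rho_n}\mid A_{\rho_n}]$ (which is bounded below by a $d$-dependent constant) \emph{and} the relative decrement on $A'$, which by (\ref{eq2}) together with Lemma \ref{lem2} is of order $\gamma/N$ (this is where the $\gamma N^{-1}$ in (\ref{events}) comes from). So the true one-step ratio is $1 - c(d)/N$, and the geometric series $\sum_n(1-c(d)/N)^{n/2}$ is of order $N$, not $O(1)$. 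Your final conclusion that $C$ can be taken independent of $N$ \emph{is} in fact correct, but only because the $6/(N-1)$ prefactor coming from your increment bound cancels this $N$. As written, the argument reaches the right answer by two compensating errors: you both overestimate the decay rate (making the sum $O(1)$) and then multiply by $1/(N-1)$, so the stated chain does not correctly account for the $N$-dependence. You should correct the decrement rate to $1-c(d)/N$, observe that the geometric sum is then $O(N)$, and point out explicitly that the $6/(N-1)$ factor restores an $N$-uniform bound. This is actually the cleanest way to obtain the claimed absolute constant, and it is a point where your increment formula is doing real work that the paper's cruder bound $D(\tau_n)+D(\tau_{n-1})$ does not.

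A minor further remark: the explicit value $(1/12)^d$ for $\Pr[A'\mid A]$ is not quite right because the balls may extend outside $[0,1]^d$, but a $d$-dependent positive lower bound certainly holds, which is all that is needed.
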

\begin{proof}
Let $\mu' (t) := \mu_{N-1} ( \XX'_N(t) )$. Observe that for $N \geq 3$, $\XX'_N (t)$ and $\XX'_N(t-1)$ have at least one point
in common; choose one such point, and call it $Z(t)$. Then $\mu' (t) \in \hull  \XX_N'(t)   \subseteq \hull  \XX_N (t)$,
where $\hull \XX$ denotes the convex hull of the point set $\XX$. So $\| Z(t) - \mu'(t) \| \leq D(t)$.
Similarly $\| Z(t) - \mu'(t-1) \| \leq D(t-1)$.
By definition of $\tau_n$, $\mu'(t) = \mu'(t-1)$ and $D(t) = D(t-1)$ unless $t  = \tau_n$
for some $n$, in which case $\mu'(\tau_n -1 ) = \mu'(\tau_{n-1} )$ and $D(\tau_n - 1) = D(\tau_{n-1})$.
Hence,
\begin{align} \sum_{t \geq 1} \| \mu' (t) - \mu' (t-1) \| & =
\sum_{n \geq 1} \| \mu'(\tau_n) - \mu'(\tau_{n-1} ) \| \nonumber\\
& \leq \sum_{n \geq 1 } \left( \| \mu' (\tau_n ) - Z ( \tau_n ) \| +  \| \mu' (\tau_{n-1} ) - Z ( \tau_n ) \| \right) ,
\label{eq41}
\end{align}
by the triangle inequality. Then  the preceding remarks imply that
\[ \sum_{t \geq 1} \| \mu' (t) - \mu' (t-1) \|
   \leq \sum_{n \geq 1} ( D(\tau_n) + D(\tau_{n-1}) ) < \infty, ~{\rm a.s.}, \]
by Lemma \ref{lem5}. Hence there is some (random) $\xi_N \in [0,1]^d$ for which $\mu'(t) \to \xi_N$ a.s.\ as $t \to \infty$,
and $L^2$ convergence follows by the bounded convergence theorem.

For the final statement in the lemma we use a variation of the preceding argument.
Let $M := \max \{ n \in \ZP : \tau_n \leq t_0 \}$.
Then $F(t_0) = F(\tau_M)$ and $\mu' (\tau_M) = \mu' (t_0)$, so that 
on $\{ \nu_\eps \leq t_0 \}$, we have $\{ \nu_\eps \leq \tau_M \}$ as well. Hence
(by Corollary \ref{cor1}) 
$F ( \tau_M ) < \eps^2$.
A similar argument to that
in the proof of Lemma \ref{lem5} shows that, for $m \geq 0$,
\[ \Exp [ F(\tau_{M+m} ) \mid \FF_{t_0} ] \leq \re^{-cm} \Exp [ F(\tau_{M}) \mid \FF_{t_0} ] \leq  \eps^2  \re^{-cm} ,    \]
on $\{ \nu_\eps \leq t_0 \}$,
where $c>0$ depends on $N$ but not on $m$ or $\eps$. 
Thus by Lemma \ref{lem2}, on $\{ \nu_\eps \leq t_0 \}$,
$\Exp [ D ( \tau_{M+m} ) ^2 \mid \FF_{t_0} ] \leq 2  \eps^2  \re^{-cm}$. Also, similarly to (\ref{eq41}),
\begin{align*}
 \max_{t \geq \tau_M} \| \mu' (t) - \mu' (\tau_M) \|^2 & \leq \sum_{t \geq \tau_M} \| \mu'(t) - \mu'(t-1) \|^2 \\
 & \leq \sum_{m \geq 1} ( D (\tau_{M+m}) + D(\tau_{M+m-1} ) )^2 .\end{align*}
Taking expectations and using the Cauchy--Schwarz inequality, we obtain, on $\{ \nu_\eps \leq t_0 \}$,
\[ 
\Exp \Big[ \max_{t \geq t_0 } \| \mu' (t) - \mu' (t_0) \|^2 \mid \FF_{t_0} \Big] =
\Exp \Big[ \max_{t \geq \tau_M} \| \mu' (t) - \mu' (\tau_M) \|^2 \mid \FF_{t_0} \Big]
\leq 8   \eps^2 \re^c  \sum_{m \geq 1} \re^{-cm} ,\]
which is a constant times $\eps^2$.
The result follows from  Jensen's
inequality.
\end{proof}

\begin{proof}[Proof of Theorem \ref{thm1}.]
Again let $\mu' (t) := \mu_{N-1} ( \XX'_N(t) )$.
We have from Lemma \ref{lem7} that $\mu'(t) \to \xi_N$ a.s.
Now, for any $j \in \{1,\ldots, N-1\}$, by the triangle inequality,
\[ \| X_{(j)} (t) - \xi_N \| \leq \| X_{(j)} (t) - \mu' (t) \| + \| \mu'(t) - \xi_N \| \leq D (t) +   \| \mu'(t) - \xi_N \| ,\]
which tends to $0$ a.s.\ as $t \to \infty$, since $D(t) \to 0$ a.s.\ by Lemma \ref{lem5}.
 This establishes the first statement in (\ref{thm1a}). Moreover, by (\ref{newextreme}),
$\XX_N^*(t+1) \neq U_{t+1}$ only if $A_{t+1}$ occurs. On $A_{t+1}$, $\XX_N^*(t+1)$ is one of the points of $\XX_N'(t)$, and so in particular
$\| \XX_N^*(t+1) - \mu' (t) \| \leq D(t)$. In addition, on $A_{t+1}$, we have $\| U_{t+1} - \mu'(t) \| \leq 3 D(t)$. So by the triangle inequality,
\[ \left\| \XX_N^*(t+1) - U_{t+1} \right\| \leq 4 D(t) \1 ( A_{t+1} ) ,\]
  which tends to $0$ a.s., again by Lemma \ref{lem5}.
This gives the final part of (\ref{thm1a}).
\end{proof}

\section{The limit distribution in one dimension}
\label{sec:1d}

\subsection{Overview and simulations}

Throughout this section we restrict attention to $d=1$.
Of interest is the distribution of the limit $\xi_N$ in (\ref{thm1a}), and its behaviour as $N \to \infty$. Simulations 
suggest that $\xi_N$ is highly dependent on the initial configuration: Figure \ref{fig1}
shows histogram estimates for $\xi_N$ from repeated simulations with a deterministic initial condition.
In more detail, $10^8$ runs of each simulation were performed, each starting from the same initial condition; each run was terminated
when $D(t) < 0.0001$ for the first time, and the value of $\mu_{N-1} (\XX_N'(t))$  was output as an approximation to $\xi_N$ (cf 
Theorem \ref{thm1}). Note that, by (\ref{events}), in the simulations
one may take
the new points not $U[0,1]$ but uniform on a typically much smaller interval, which greatly increases the rate of updates to the core
configuration.

\begin{figure}[!h]
\begin{center}
\includegraphics[width=0.47\textwidth,clip=true,trim=0.65cm 1.6cm 0.8cm 0.8cm]{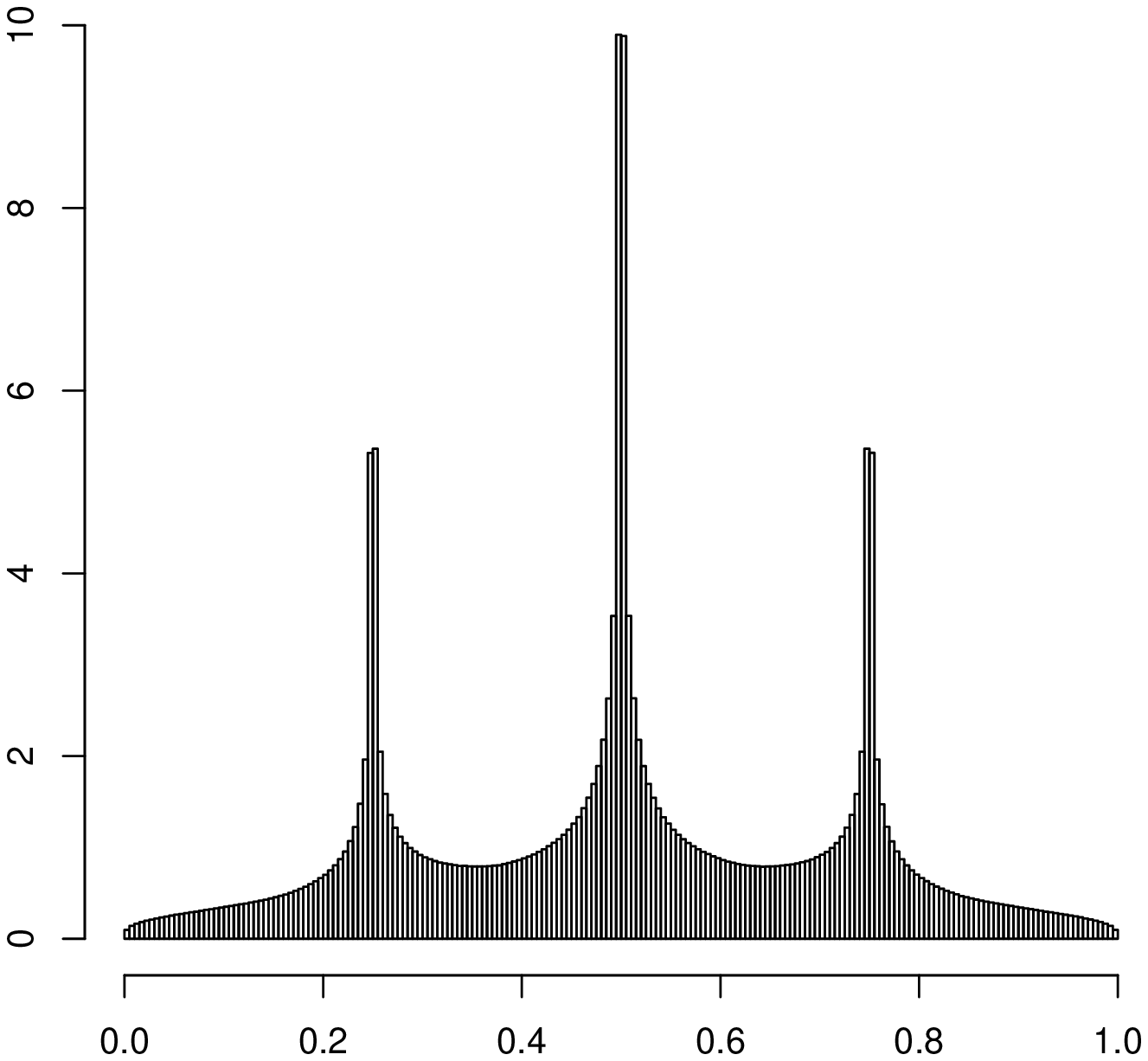}
\includegraphics[width=0.47\textwidth,clip=true,trim=0.65cm 1.6cm 0.8cm 0.8cm]{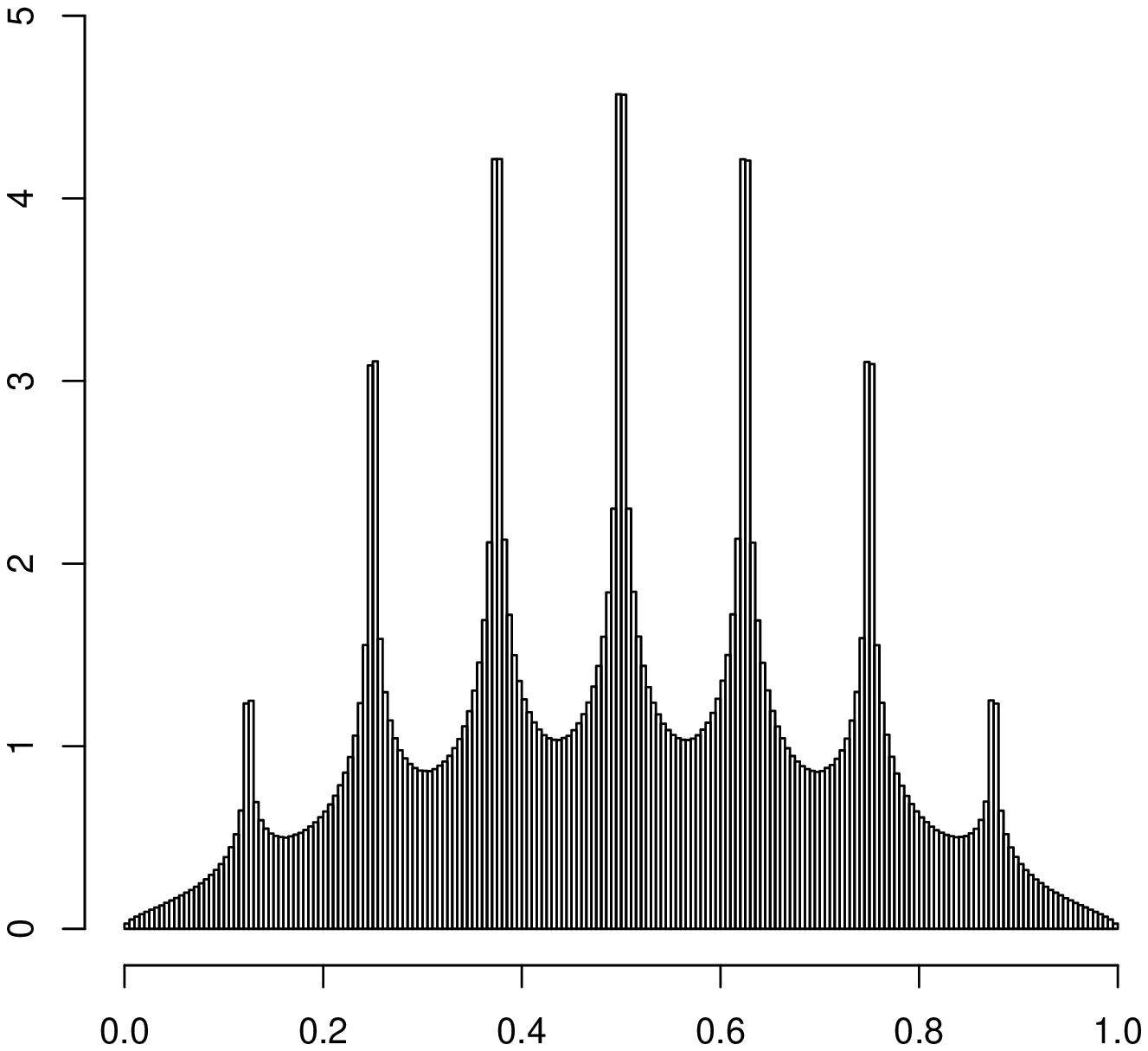}
\end{center}
\caption{Normalized histograms each based on $10^8$ simulations, with $N=3$ and initial points $\frac{1}{4},\frac{1}{2},\frac{3}{4}$ ({\em left})
and $N=7$ and initial points $\frac{k}{8}$, $k \in \{1,\ldots,7\}$ ({\em right}).}
\label{fig1}
\end{figure}

Figure \ref{fig2} shows sample results obtained
with an initial condition of $N$ i.i.d.\ $U[0,1]$ random points. Now the histograms appear much simpler, although, of course, they can be viewed as mixtures
of complicated multimodal histograms similar to
those in Figure \ref{fig1}.  
In the uniform case, it is natural to ask whether $\xi_N$ converge in distribution to some limit distribution as $N \to \infty$.

\begin{figure}[!h]
\begin{center}
\includegraphics[width=0.47\textwidth,clip=true,trim=0.65cm 1.6cm 0.8cm 0.8cm]{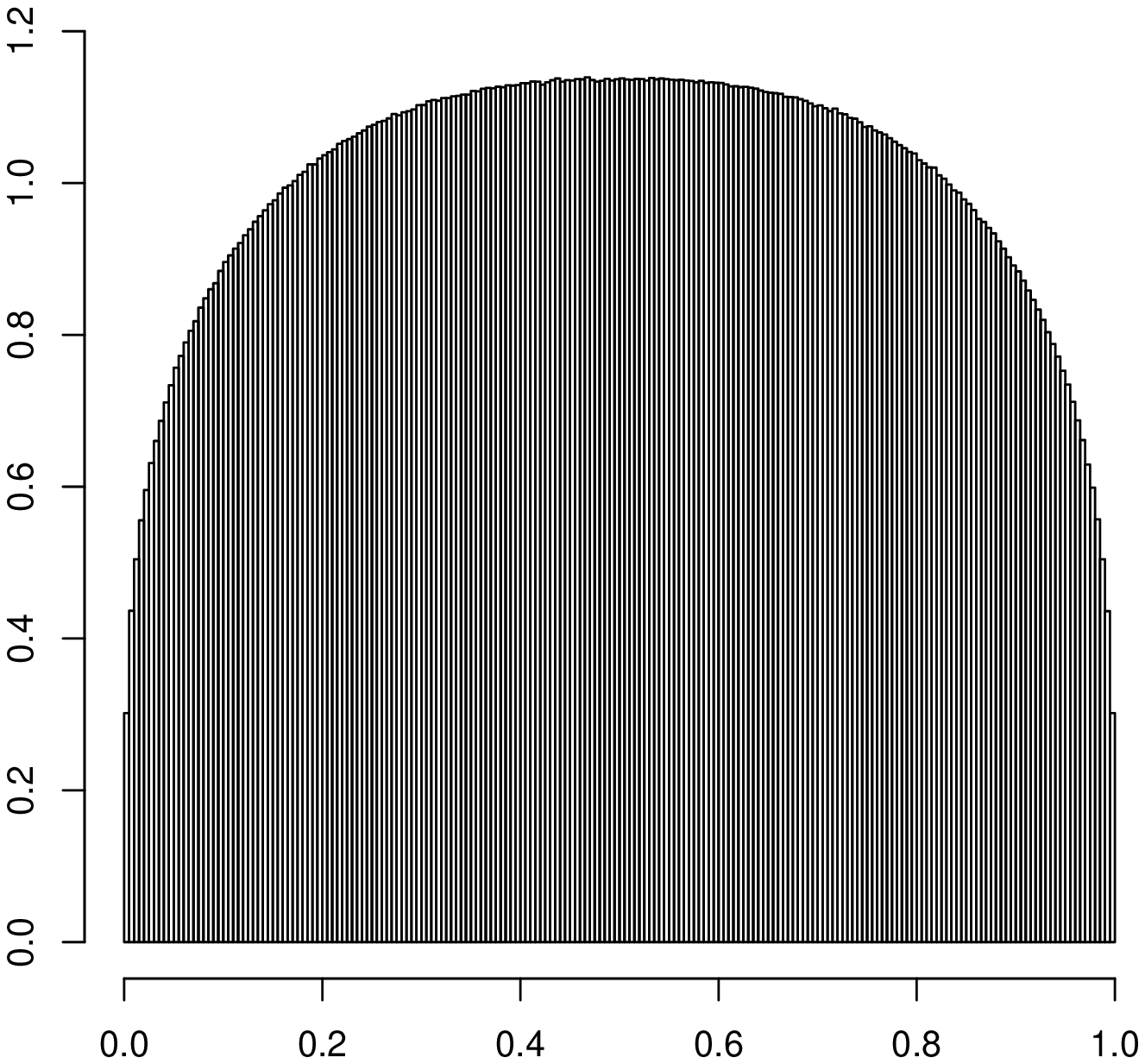}
\includegraphics[width=0.47\textwidth,clip=true,trim=0.65cm 1.6cm 0.8cm 0.8cm]{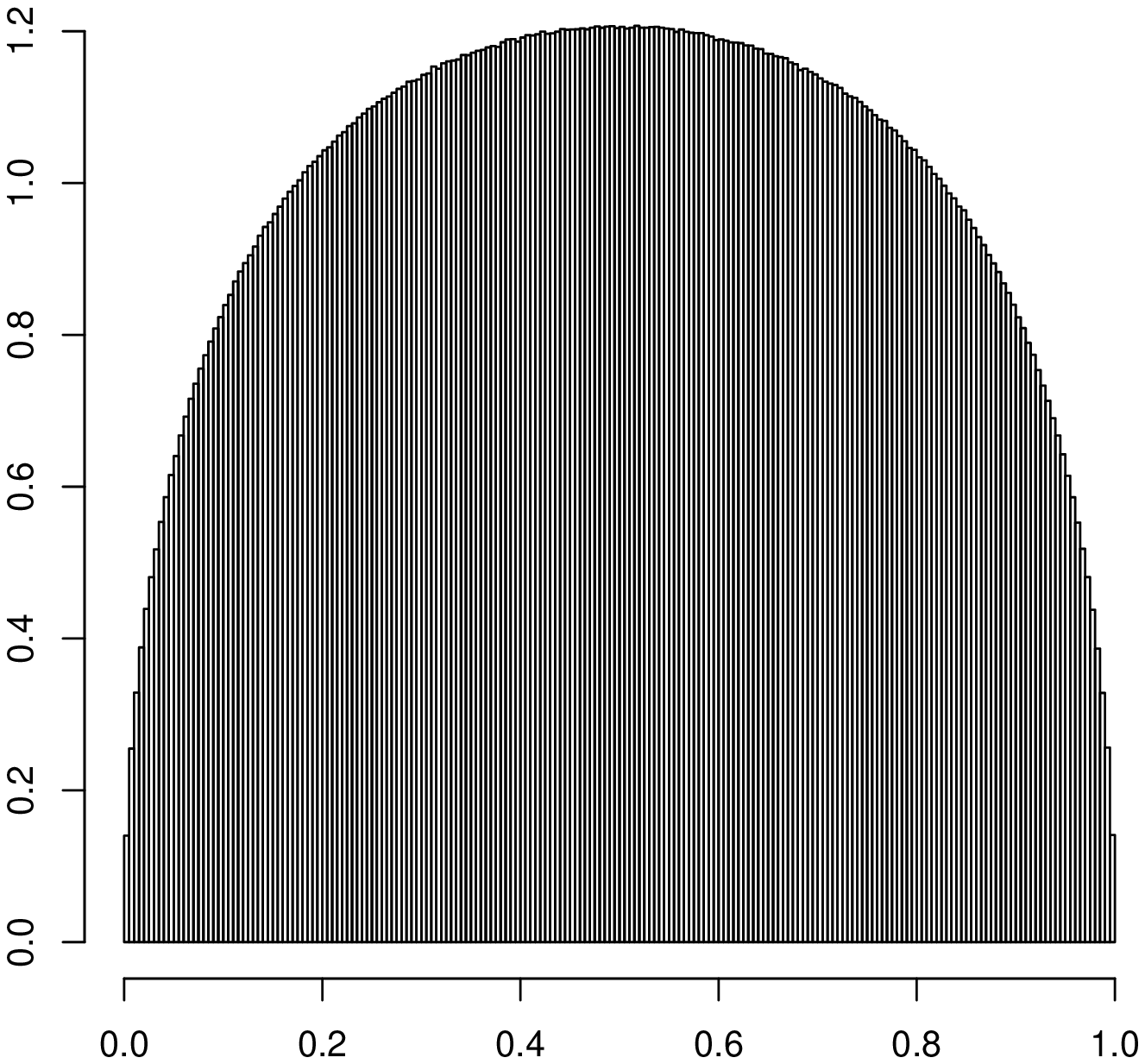}
\includegraphics[width=0.47\textwidth,clip=true,trim=0.65cm 1.6cm 0.8cm 0.8cm]{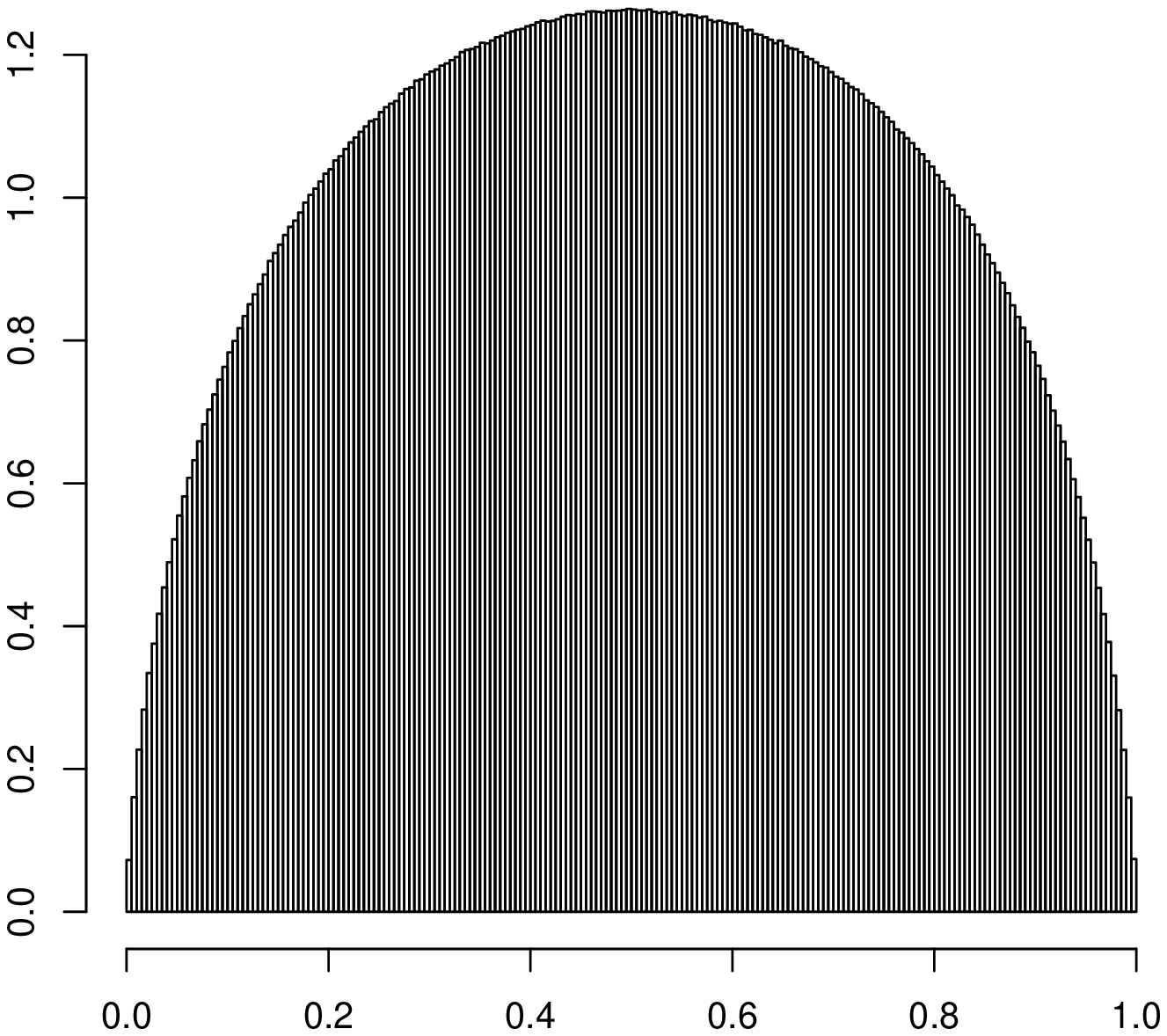}
\includegraphics[width=0.47\textwidth,clip=true,trim=0.65cm 1.6cm 0.8cm 0.8cm]{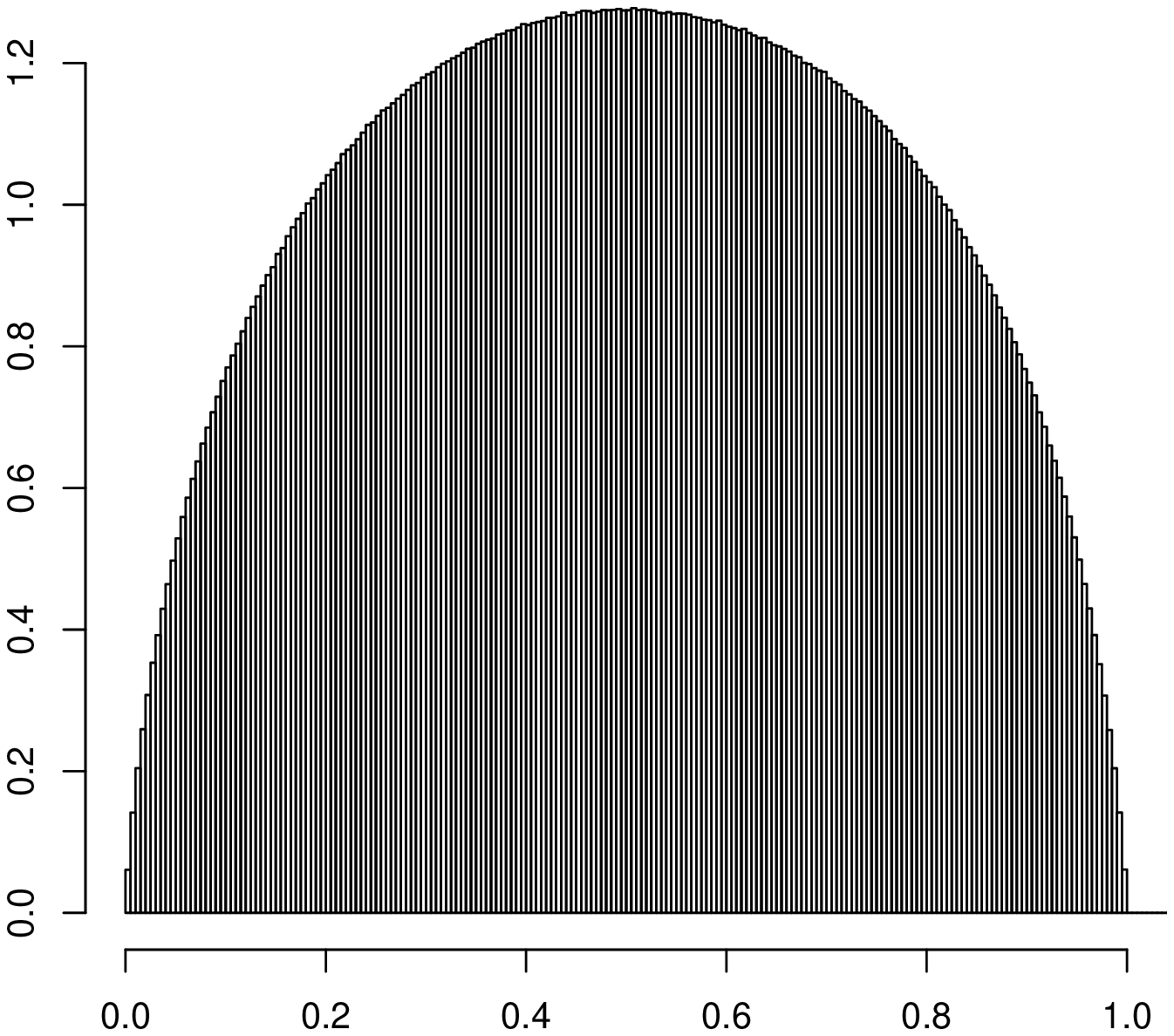}
\end{center}
\caption{Normalized histograms for $10^8$ simulations  with random i.i.d.\ uniform initial conditions, with ({\em top row}) $N=3, 10$ and ({\em bottom row}) $N=50, 100$.}
\label{fig2}
\end{figure}

The form of the histograms in Figure \ref{fig2}
might suggest a Beta distribution
(this is one sense in which the randomized beauty contest is ``reminiscent of a P\'olya urn'' \cite[p.\ 390]{gkw}).  
An ad-hoc Kolmogorov--Smirnov analysis (see Table \ref{tab1}) suggests that the distributions are indeed `close' to Beta distributions, but different
enough for the match to be unconvincing. Simulations for 
large $N$ are computationally intensive. We remark that it is not unusual for Beta or `approximate Beta'
distributions to appear as limits of schemes that proceed via iterated procedures on intervals: see for instance \cite{jk} and references therein.

\begin{table}[!h]
\center
\begin{tabular}{ccc}
$N$ & $\beta $ & $\kappa(\beta)$  \\
\hline
3   & 1.256      &  0.0010    \\
10  & 1.392      &  0.0016    \\
50  & 1.509      &  0.0018    \\
100 & 1.539      &  0.0019
\end{tabular}
\label{tab1}
\caption{$\kappa (\beta)$ is the Kolmogorov--Smirnov
distance between a Beta($\beta,\beta$) distribution and the empirical distribution from the samples of size $10^8$ plotted in Figure \ref{fig2}, minimized over $\beta$ in each case.}
\end{table}

In the rest of this section we study $\xi_N$ and its distribution. Our results on the limit distribution, in particular, leave several interesting open problems, including
a precise description of the phenomena displayed by the simulations reported above. In Section \ref{sec:rdp} we give an alternative (one might say
`phenomenological') characterization of the limit $\xi_N$, and contrast this with an appropriate rank-driven process  in the sense of \cite{gkw}.
In Section \ref{support} we show that the distribution of $\xi_N$ is fully supported on $(0,1)$ and assigns positive probability to any proper interval, using
a 
construction permitting transformations of configurations. Finally, Section \ref{sec:three} is devoted to the case $N=3$, for which
some explicit computations for the distribution of $\xi_N$ (in particular, its moments) are carried out.

\subsection{A characterization of the limit}
\label{sec:rdp}

Let
\[ \pi_N (t) := \frac{1}{t} \card \left\{ s \in \{1,2,\ldots, t\} : \XX_N^* (s) < \mu_{N} ( \XX_N (s) ) \right\} ,\]
the proportion of times up to time $t$ for which the extreme point was the {\em leftmost} point (as opposed to the rightmost).
The next result
shows that $\pi_N (t)$ converges to the (random) limit $\xi_N$ given by Theorem \ref{thm1}; we give the proof after some additional remarks.

\begin{prop}
\label{left}
Let $d=1$ and $N \geq 3$. Then $\lim_{t\to\infty}   \pi_N (t) = \xi_N$ a.s.
\end{prop}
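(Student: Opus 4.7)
The intuition behind the proposition is that once the core has nearly concentrated at $\xi_N$, each new point $U_s$ typically becomes the extreme point of $\XX_N(s)$, and is the leftmost of the $N$ points precisely when $U_s < \mu'(s-1)$, where $\mu'(r) := \mu_{N-1}(\XX_N'(r))$. Since $\mu'(r) \to \xi_N$ a.s.\ by Lemma \ref{lem7}, a martingale law of large numbers should then average $\1\{U_s < \mu'(s-1)\}$ to give $\xi_N$.

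The first step is a deterministic decomposition. By (\ref{newextreme}), on $A_s^{\rm c}$ we have $\XX_N^*(s) = U_s$, and since $\mu_N(\XX_N(s)) = \frac{N-1}{N}\mu'(s-1) + \frac{1}{N}U_s$, a one-line computation yields that $\{\XX_N^*(s) < \mu_N(\XX_N(s))\} \cap A_s^{\rm c} = \{U_s < \mu'(s-1)\} \cap A_s^{\rm c}$. Writing $B_s$ for the event $\XX_N^*(s) < \mu_N(\XX_N(s))$, this gives $|\1(B_s) - \1\{U_s < \mu'(s-1)\}| \leq \1(A_s)$, and hence
\[ \Big| \pi_N(t) - \frac{1}{t}\sum_{s=1}^t \1\{U_s < \mu'(s-1)\} \Big| \leq \frac{1}{t}\sum_{s=1}^t \1(A_s) .\]

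The next step is to dispatch the error term $t^{-1}\sum_s \1(A_s)$. By (\ref{eq6}) together with Lemma \ref{lem2}, $\Pr[A_s \mid \FF_{s-1}] \leq C D(s-1)^d$, which tends to $0$ a.s.\ since $D(s) \to 0$ a.s.\ by Lemma \ref{lem5}. Hence by Cesàro, $t^{-1}\sum_{s=1}^t \Pr[A_s \mid \FF_{s-1}] \to 0$ a.s.; the compensating process $\sum_{s=1}^t (\1(A_s) - \Pr[A_s \mid \FF_{s-1}])$ is a martingale with bounded increments, so Azuma--Hoeffding combined with the first Borel--Cantelli lemma gives $t^{-1}\sum_{s=1}^t (\1(A_s) - \Pr[A_s \mid \FF_{s-1}]) \to 0$ a.s., and the error term therefore vanishes.

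Finally, since $U_s \sim U[0,1]$ is independent of $\FF_{s-1}$ while $\mu'(s-1) \in [0,1]$ is $\FF_{s-1}$-measurable, $\Exp[\1\{U_s < \mu'(s-1)\} \mid \FF_{s-1}] = \mu'(s-1)$; the same Azuma--Borel--Cantelli argument yields $t^{-1}\sum_{s=1}^t (\1\{U_s < \mu'(s-1)\} - \mu'(s-1)) \to 0$ a.s., while Lemma \ref{lem7} with Cesàro gives $t^{-1}\sum_{s=1}^t \mu'(s-1) \to \xi_N$ a.s. Assembling the three steps proves the proposition. The only point requiring genuine care is the bookkeeping in the first step for the rare event $A_s$; the stochastic heavy lifting is already provided by Lemmas \ref{lem5} and \ref{lem7}.
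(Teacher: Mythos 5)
Your proof is correct and takes a genuinely different route from the paper's. The paper proves the result by an $\eps$-argument: it fixes $\eps>0$, waits for the (random) time $T$ after which the core is within $\eps$ of $\xi_N$, shows via a chain of triangle-inequality estimates that $\{U_{t+1} < \xi_N - 8\eps\}$ forces $U_{t+1}$ to be extreme and leftmost, applies the strong law of large numbers to the $U_s$ with $s\notin\{\tau_n\}$ (conditioning on the stopping-time sequence to preserve independence) to get $\liminf_t \pi_N(t)\geq \xi_N - 9\eps$, and then combines with the symmetric right-tail bound. Your approach instead observes that on $A_s^{\mathrm c}$ the leftmost-extreme event is \emph{exactly} $\{U_s<\mu'(s-1)\}$ (the algebraic cancellation of the $U_s/N$ term in $\mu_N(\XX_N(s))$ is the small but crucial observation that makes this clean), and then decomposes $\pi_N(t)$ into a Ces\`aro average of $\mu'(s-1)$, a bounded-increment martingale handled by Azuma and Borel--Cantelli, and an $O(t^{-1}\sum_s \1(A_s))$ error that vanishes by the same mechanism. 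This avoids the fiddly constant-chasing (the choice $K=8$, the bounds (\ref{88a})--(\ref{88b})) and the need to condition on $\{\tau_n\}$ to restore independence before invoking the SLLN, at the modest cost of invoking a concentration inequality rather than only the classical SLLN. Both proofs ultimately lean on the same inputs, namely Lemma \ref{lem7} for $\mu'(t)\to\xi_N$ and (\ref{eq6}) together with $D(t)\to 0$ (or Lemma \ref{lem4}) to control how often $A_s$ occurs; yours packages them more transparently.
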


It is instructive to contrast this behaviour with a suitable {\em rank-driven process} (cf \cite{gkw}).
Namely, fix a parameter $\pi \in (0,1)$. Take $N$ points in $[0,1]$, and at each step in discrete time replace either the leftmost  point (with probability $\pi$)
or else the rightmost point (probability $1-\pi$), independently at each step; inserted points are independent $U[0,1]$ variables. For this process,
results of \cite{gkw} show that the marginal distribution
 of a typical point converges (as $t \to \infty$ and then $N \to \infty$) to a unit point mass
at $\pi$ (cf Remark 3.2 in \cite{gkw}).

This leads us to one sense in which the randomized beauty contest is, to a limited extent, ``reminiscent of a P\'olya urn'' \cite[p.\ 390]{gkw}.
 Recall that a P\'olya urn consists of an increasing number of balls, each of which is either red or blue; at each step in discrete time, a ball is drawn uniformly
at random from the urn and put back into the urn together with an extra ball of the same colour. The stochastic process of interest is the proportion of red balls, say; it converges
to a random limit $\pi'$, which has a Beta distribution.
The beauty contest can be viewed as occupying a similar relation to the rank-driven process  described above as the  P\'olya urn process
does to the simpler model in which, at each step, independently,
either
  a red ball is added to the urn (with probability $\pi'$) or else a blue ball
 is added (probability $1-\pi'$).

\begin{proof}[Proof of Proposition \ref{left}.]
Given $\tau_0, \tau_1, \tau_2, \ldots$, $\xi_N = \lim_{n \to \infty} \mu_{N-1} ( \XX'_N (\tau_n) )$ is independent
of $U_t$, $t \notin \{ \tau_0, \tau_1, \ldots \}$, since, by (\ref{newextreme}),
 any such $U_t$ is replaced at time $t+1$.
Let $\eps>0$. By Theorem \ref{thm1}, there exists a random $T < \infty$ a.s.\ for which
$\max_{1 \leq i \leq N-1} | \xi_N - X_{(i)}   (t)   | \leq \eps  $ for all $t \geq T$.

Since $\mu_N (\XX_N (t+1)) = \frac{N-1}{N} \mu_{N-1} ( \XX'_N (t) ) + \frac{1}{N} U_{t+1}$, we have that for $t \geq T$, using the
triangle inequality, for any $i \in \{1,\ldots, N-1\}$,
\begin{align*}  | \mu_N ( \XX_N (t+1) ) - X_{(i)} (t) |  & \leq \eps + | \mu_N ( \XX_N (t+1) ) - \xi_N | \\
& \leq \eps + \frac{N-1}{N} | \mu_{N-1} ( \XX'_N (t) ) - \xi_N | + \frac{1}{N} | U_{t+1} - \xi_N | .\end{align*}
Hence, for $t \geq T$,  
\begin{equation}
\label{88a}
\max_{1 \leq i \leq N-1}
\left| X_{(i)} (t) -  \mu_{N} ( \XX_N (t+1) ) \right|
\leq \frac{1}{N} | \xi_N - U_{t+1} | + 2 \eps.
\end{equation}
On the other hand, for $t \geq T$, $\mu_{N} ( \XX_N (t+1) ) \geq \frac{N-1}{N} ( \xi_N - \eps) + \frac{1}{N} U_{t+1}$,
so that for $i \in \{ 1, \ldots,  N-1\}$,
\begin{equation}
\label{88b} \mu_{N} ( \XX_N (t+1) ) -  U_{t+1}   \geq \frac{N-1}{N} ( \xi_N - U_{t+1}   -\eps )  .\end{equation}
Suppose that $U_{t+1} < \xi_N - K \eps$ for some $K \in (1,\infty)$.
Then, from (\ref{88a}) and (\ref{88b}),
\begin{align*}
& ~~{} \left| \mu_N ( \XX_N (t+1) ) - U_{t+1} \right| - \max_{1 \leq i \leq N-1} \left| X_{(i)} (t) - \mu_N ( \XX_N (t+1) ) \right|
\\
& \geq \frac{N-2}{N} ( \xi_N - U_{t+1} ) - \frac{3N-1}{N} \eps \\
& > \frac{\eps}{N} \left( (N-2) K - 3N +1 \right ) .\end{align*}
This last expression is positive
 provided $K \geq \frac{3N-1}{N-2}$, which is the case for all $N \geq 3$ with the choice $K=8$, say.
Hence, with this choice of $K$, $\{ U_{t+1} < \xi_N - 8 \eps \}$
implies that $U_{t+1}$ is farther from $\mu_{N+1} ( \XX_N (t+1) )$ than is
any of the points left over from $\XX'_N(t)$.
Write $L_t := \{  U_{t}  < \xi_N - 8 \eps \}$.
Then we have shown that, for $t \geq T$,
 the event $L_t$ implies that $U_{t} = \XX^*_N (t)$, and, moreover, $U_{t} < \mu_{N} ( \XX_N (t) )$.
Hence, for $t \geq T$,
\[  \pi_N (t) \geq \frac{1}{t} \sum_{s=T}^t \1 ( L_s )   \geq    \frac{1}{t} \sum_{\stackrel{s=T}{s \notin \{ \tau_0, \tau_1, \ldots\}}}^t \1 ( L_s ).
\]
Given $\tau_0, \tau_1, \ldots$,
$U_s$, $s \notin \{ \tau_0, \tau_1, \ldots\}$ are independent of $T$ and $\xi_N$.
For such an $s$, $U_s$ is uniform on $I_s := [0,1] \setminus B ( \mu_{N-1} ( \XX'_N (s) ) ; 3 D(s) )$,
and, for $s \geq T$, $D(s) \leq 2 \eps$ so that
 $I_s \supseteq [0, \max \{ \xi_N - 8 \eps , 0\} ] \cup [ \min\{ \xi_N + 8   \eps, 1\} , 1]$.
Hence, given  $s \notin \{ \tau_0, \tau_1, \ldots\}$ and $s \geq T$,
\[ \Pr [ L_s ] = \Pr [ U_{s}  < \xi_N - 8 \eps \mid U_s \in I_s ]
\geq  
\xi_N - 8 \eps .
   \]
Hence, considering separately the cases $\xi_N > 9 \eps$ and $\xi_N \leq 9 \eps$, the strong law
of large numbers implies that
\[ \frac{1}{t} \sum_{\stackrel{s=T}{s \notin \{ \tau_0, \tau_1, \ldots\}}}^t \1 ( L_s ) \geq  \xi_N - 9 \eps ,\]
for all $t$ sufficiently large;
here we have used the fact that $t -T \to \infty$ a.s.\ as $t \to \infty$ and   $\# \{ n \in \ZP : \tau_n \leq t \} = o(t)$ a.s., which follows
 from (\ref{eq6}) and Lemma \ref{lem4}.
 Since $\eps>0$ was arbitrary,
it follows that $\liminf_{t \to \infty} t^{-1} \pi_N (t) \geq \xi_N$ a.s.
The symmetrical argument considering events of the form $R_t := \{U_t > \xi_N + 8 \eps\}$
shows that $\liminf_{t \to \infty} ( 1 - t^{-1}  \pi_N (t) ) \geq 1 - \xi_N$ a.s., so $\limsup_{t \to \infty} t^{-1} \pi_N (t) \leq \xi_N$ a.s.
Combining the two bounds gives the result.
\end{proof}

\subsection{The limit has full support}
\label{support}

 In this section, we prove  that $\xi_N$ is fully supported on $(0,1)$ in the sense that $\essinf \xi_N = 0$, $\esssup \xi_N =1$,
  and $\xi_N$ assigns positive probability to any non-null interval.  
  Let
  \begin{equation}
  \label{spacing}
   m_N (0) := \min \{ \| X_i(0) - X_j(0) \| : i,j \in \{0,1,\ldots,N+1\}, \, i \neq j \}, \end{equation}
  where we use the conventions $X_0(0) := 0$ and $X_{N+1} (0) := 1$.
 For $\rho >0$ let $S_\rho$ denote the $\FF_0$-event  
$S_\rho := \{ m_N (0) \geq  \rho \}$ that no point of $\XX_N(0)$ is closer than distance $\rho$
to any other point of $\XX_N(0)$ or to either of the ends of the unit interval.

\begin{prop}
\label{limit}
 Let $d =1$ and $N \geq 3$. Let $\rho \in (0,1)$. For any
 non-null interval subset $I$ of $[0,1]$, there exists $\delta >0$
 (depending on $N$, $I$, and $\rho$) for which
 \begin{equation}
 \label{lowerbound}
  \Pr [ \xi_N \in I \mid \XX_N (0) ] \geq  \delta \1 ( S_\rho) , \as \end{equation}
 In particular, in the case where $\XX_N(0)$ consists of $N$ independent
  $U[0,1]$ points,
 $\Pr [ \xi_N \in I ] >0$ for any non-null interval $I \subseteq [0,1]$.
\end{prop}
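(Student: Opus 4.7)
The plan is to combine an explicit deterministic control scheme with the drift-control afforded by Lemma \ref{lem7}. Fix $\alpha$ in the interior of $I \cap (0,1)$, pick $\eta>0$ with $[\alpha-3\eta,\alpha+3\eta]\subseteq I$, and then $\eps>0$ with $2C\eps\leq\eta$, where $C$ is the constant from Lemma \ref{lem7}. Writing $\mu'(t):=\mu_{N-1}(\XX'_N(t))$ throughout, I claim it will suffice to produce a deterministic $T=T(N,\rho,I)$ and an event $E\in\FF_T$ with $\Pr[E\mid\FF_0]\geq\delta_1(N,\rho,I)>0$ on $S_\rho$, on which both $|\mu'(T)-\alpha|\leq\eta$ and $F(T)\leq\eps^2$ hold. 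Indeed, given such an $E$, Lemma \ref{lem7} applied at $t_0=T$ combined with Markov's inequality gives $\Pr[\sup_{t\geq T}|\mu'(t)-\mu'(T)|>2\eta\mid\FF_T]\leq 1/2$ on $E$, so Theorem \ref{thm1} forces $\xi_N\in[\alpha-3\eta,\alpha+3\eta]\subseteq I$ with conditional probability at least $1/2$, yielding \eqref{lowerbound} with $\delta=\delta_1/2$.

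The event $E$ will have the form $E=\bigcap_{t=1}^{T}\{U_t\in J_t\}$, where each $J_t\subseteq[0,1]$ is an $\FF_{t-1}$-measurable interval whose length is bounded below uniformly in terms of $N,\rho,I$; the construction will have two phases. In the first phase (sweep), pick intermediate targets $c_0=\mu'(0), c_1,\ldots,c_M=\alpha$ with $|c_{j+1}-c_j|\leq s$ for a small $s=s(N,\rho)>0$, so $M=O(1/s)$, and for each $j$ run $N$ consecutive steps with $J_t=[c_j-s,c_j+s]$. A direct comparison of $|\mu_{\mathrm{new}}-U_{t+1}|$ with $|\mu_{\mathrm{new}}-Y|$ for the would-be extreme $Y$, in the spirit of the computation \eqref{Fchange} in the proof of Lemma \ref{lem3}, shows that when $|\mu'(t)-c_j|\leq s$ and $D(t)$ is suitably controlled, any $U_{t+1}\in J_t$ is not the new extreme, so it enters the core and pulls $\mu'$ by $\Theta(1/N)$ towards $c_j$; after $N$ such steps $\mu'$ is within $O(s)$ of $c_j$, and iterating across $j$ brings $\mu'(t)$ within $\eta/2$ of $\alpha$. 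In the second phase (concentration), set $J_t=[\alpha-r,\alpha+r]$ for a small $r=r(N,\eta,\eps)>0$ and run for $T_2=O(\log(1/\eps))$ further steps: while $D(t)\geq 8r$ each step is automatically in the good event $A'_{t+1}$ of Lemma \ref{lem3}, so $F$ decreases by the factor $1-\gamma/N$ per step, giving $F(T)\leq\eps^2$; the additional drift of $\mu'$ during Phase 2 is controlled by the same telescoping argument as in the proof of Lemma \ref{lem7} (using Lemma \ref{lem5}) and is kept below $\eta/2$ by making $r$ small enough.

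The main obstacle is the quantitative verification of Phase 1, namely the a.s.\ non-extremality of $U_{t+1}\in J_t$ uniformly throughout the sweep, and the a priori bound on $D(t)$ needed for this to be meaningful. Since the total number of sweep steps $NM$ is a deterministic function of $(N,\rho,I)$ and $F$ is non-increasing by Corollary \ref{cor1}, with $F(0)\leq\tfrac12(N-1)$ by Lemma \ref{lem2}, the diameter $D(t)$ remains uniformly bounded throughout, and choosing $s$ small compared to $1/N$ forces the extreme among $\XX'_N(t)\cup\{U_{t+1}\}$ to be a ``flank'' point of the old core rather than $U_{t+1}$ itself; the $\Theta(1/N)$ pull then comes from the formula $\mu_{N-1}(\XX'_N(t+1))=\mu'(t)+(U_{t+1}-Y)/(N-1)$ for the new core barycentre. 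Collecting the lower bounds $\Pr[U_t\in J_t\mid\FF_{t-1}]\geq\min(2s,2r)$ across all $T=NM+T_2$ steps yields $\delta_1\geq(\min(2s,2r))^T>0$, depending only on $N,\rho,I$, as required. Finally, the independent-$U[0,1]^N$ initial-data case follows immediately from \eqref{lowerbound}, because $\Pr[S_\rho]>0$ for any $\rho\in(0,1/(N+1))$ by standard uniform-spacing estimates (see the Appendix).
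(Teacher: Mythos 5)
Your high-level strategy matches the paper's: manufacture, from $S_\rho$, a positive-probability event forcing the core barycentre close to the target with $F$ small (the paper packages this as Lemma \ref{move}), then apply Lemma \ref{lem7} together with Markov's inequality to confine $\xi_N$. The glue step at the end is fine. But Phase~1 of your construction contains a genuine gap, and it is precisely the difficulty that Lemma \ref{move} is engineered to overcome.

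The assertion that inserting $U_{t+1}$ near $c_j$ ``pulls $\mu'$ by $\Theta(1/N)$ towards $c_j$'' is false in general. When a new point enters the core, the barycentre updates as $\mu'(t+1)=\mu'(t)+(U_{t+1}-Y)/(N-1)$, and the sign of this increment is decided by the side on which the removed extreme $Y$ lies --- not by where $U_{t+1}$ lies relative to $c_j$. For $N=4$ with core $\{0,\,0.1,\,0.9\}$, so $\mu'(t)=1/3$, inserting $U_{t+1}=1/3$ (exactly at the current barycentre, hence in any $J_t$ around $c_j\approx \mu'(t)$) removes $Y=0.9$ and sends $\mu'$ to $\approx 0.144$, a move of order one in the wrong direction. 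Thus the hypothesis $|\mu'(t)-c_j|\le s$ at the start of one batch does not persist to the start of the next, and the inductive structure of your sweep collapses. (The argument does happen to work for $N=3$, because a two-point core is automatically symmetric about $\mu'$, but not for general $N$.)

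A second problem compounds the first. For $U_{t+1}\in J_t$ to be non-extreme you need $D(t)$ bounded below by a multiple of $s$. But by Lemma \ref{lem3}, on every step at which the new point enters the core, $F$ drops by at least the factor $1-\gamma/N$, so $D$ decays geometrically and falls below $\Theta(s)$ after only $O\bigl(N\log(D(0)/s)\bigr)$ effective steps --- far fewer than the $\Theta(N/s)$ steps your sweep requires for small $s$. Once $D\lesssim s$, the inserted point is itself extreme and is immediately ejected, and $\mu'$ stops moving. Your phrase ``$D(t)$ is suitably controlled'' is where this gets swept under the rug. The paper's Lemma \ref{move} resolves both issues at once: Steps~1--2 first destroy all the initial points and build an explicit near-equidistant grid of mesh $\nu$, and Step~3 then migrates the whole grid by placing each new arrival just \emph{outside} the grid on the target side at tuned offsets $\Delta_k$, so that the configuration geometry guarantees both that the correct flank is removed at every step and that the diameter stays on scale $\nu$ throughout the migration. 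To make your proposal work you would need to replace Phase~1 with an argument of comparable care.
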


  We suspect, but have not been able to prove,
 that $\xi_N$ has a density $f_N$ with respect to Lebesgue
measure, i.e., $\xi_N$ is absolutely continuous in the sense that for every $\eps>0$ there exists $\delta>0$ such that
$\Pr [ \xi_N \in A ] < \eps$ for every $A$ with Lebesgue measure less than $\delta$. Were  this
so,
then Proposition \ref{limit} would show that we may take $f_N(x) >0$ for all $x \in (0,1)$. Note that $\Pr [ \xi_N \in A \mid \XX_N (0) ]$
may be $0$  if $\XX_N (0)$ contains non-distinct points: e.g.\ if $N \geq 3$ and $\XX_N(0) = (x,x,\ldots,x,y)$, then
$\XX'_N(t) = (x,x,\ldots,x)$ for all $t$.

For $a \in [0,1]$, $\eps >0$, and $t \in \N$, define the event
\[ E_{a,\eps} ( t ) := \bigcap_{i=1}^N \left\{ | X_i (t) - a | < \eps \right\} .\]
The main new ingredient needed to obtain Proposition \ref{limit} is the following result.

\begin{lm}
\label{move}
Let $N \geq 3$.
For any $\rho \in (0,1)$  and   $\eps >0$  there exist $t_0 \in \N$ and $\delta_0 >0$
(depending on $N$,  $\rho$, and $\eps$) for which, for all $a \in [0,1]$,
\[ \Pr [ E_{a,\eps} (t_0) \mid \XX_N(0) ] > \delta_0 \1 (S_\rho ) , \as \]
\end{lm}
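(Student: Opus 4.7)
The plan is to construct adaptively, for each $a \in [0,1]$, an $\FF_{t_0}$-measurable event $E^\ast$ such that $E^\ast \cap S_\rho \subseteq E_{a,\eps}(t_0)$ and $\Pr[E^\ast \mid \FF_0] \geq \delta_0 > 0$ on $S_\rho$. I will take $E^\ast = \bigcap_{s=1}^{t_0} \{U_s \in J_s\}$, where each $J_s$ is an $\FF_{s-1}$-measurable interval of length bounded below by a positive constant depending only on $N$, $\rho$, $\eps$; the probability bound $\delta_0$ then follows by successive conditioning, using only that $U_s$ is $U[0,1]$ and independent of $\FF_{s-1}$.

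The intervals $J_s$ will be constructed in three stages. In Stage I (initial contraction), I condition on the events $A'_s$ of Lemma \ref{lem3} for $O(\log(1/\eps))$ steps, which by Lemma \ref{lem3} shrinks $F$ geometrically and hence brings the diameter $D$ down to order $\eps$ by Lemma \ref{lem2}. In Stage II (transport), writing $\mu'(s-1) := \mu_{N-1}(\XX_N'(s-1))$ for the current core barycenter, I steer $\mu'$ toward $a$: at each step $s$, I take $J_s$ to be an interval on the $a$-side of $\mu'(s-1)$ at distance comparable to $D(s-1)$ from $\mu'(s-1)$, so that, using the formula (\ref{munew}), the new point $U_s$ is absorbed into the core while an old-core point on the anti-$a$ side is pushed out as the new extreme $y$. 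This shifts $\mu'(s)$ toward $a$ by $\Theta(D(s-1)/N)$, while $D$ is kept bounded below by a constant times $\eps$ (by not over-contracting). Iterating for $O(N/\eps)$ such steps brings $|\mu'(\cdot) - a| \leq \eps/4$. In Stage III (final contraction), I again condition on the events $A'_s$ for $O(\log(1/\eps))$ more steps to shrink $D$ below $\eps/2$; by the telescoping estimate used to prove Lemma \ref{lem7}, during Stage III the total variation of $\mu'$ is $O(\eps)$, so $\mu'(t_0)$ remains within $\eps/2$ of $a$. Consequently the $N-1$ core points all lie in $B(a,\eps)$, and a final conditioning on $U_{t_0} \in B(a, \eps/4)$ (of probability at least $\eps/2$) ensures the non-core extreme point is also in $B(a,\eps)$, yielding $E_{a,\eps}(t_0)$.

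The main obstacle is Stage II: constructing $J_s$ so that simultaneously (i) $U_s$ is absorbed into the new core rather than itself becoming extreme, (ii) $\mu'$ shifts in the direction of $a$ by an amount controllable from below, and (iii) $D$ does not collapse too quickly, so that the transport can continue. Since both the identification of the new extreme $y$ in (\ref{munew}) and the precise shift of $\mu'$ depend on the local geometry of $\XX_N'(s-1)$, some case analysis is unavoidable; however, the one-dimensional setting makes this tractable because the linear ordering of the points determines the extreme in most cases, and we may afford a conservative choice of $J_s$ at the cost of a smaller (but still positive) constant $\delta_0$.
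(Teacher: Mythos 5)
Your three-stage outline diverges structurally from the paper's proof, and Stage II contains a genuine gap. The paper does not contract via the $A'$ events at all; rather, Steps 1 and 2 of its proof first \emph{homogenize} the core by constructing, point by point, an approximately equidistant grid of spacing $\nu = \eps/N^2$ inside $[X_M, X_{M+1}]$ (where $M = \lfloor N/2 \rfloor$), and only then (Step~3) shifts that grid towards $a$. The grid structure is essential to the transport: at each shift the paper explicitly verifies that the leftmost grid point, and not the newly added point, is farthest from the barycentre, and that verification leans on the spacings being roughly equal. There is therefore no separate ``contract first'' phase in the paper; the grid is built at the right scale from the start.

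Your Stage~I conditions only on the events $A'_s$, which decrease $F$ but impose no control on the \emph{shape} of the core, so in Stage~II you must be prepared to transport a small-diameter core of arbitrary shape --- and that is not in general possible. Take $N \geq 4$ and a core consisting of $N-2$ coincident points at $0$ together with a single outlier at $D$, so $\mu' = D/(N-1)$, and suppose $a$ lies to the right, i.e.\ on the outlier's side. A direct check of which point is farthest from $\mu_{\rm new}$ shows that for any new point $x > D$ the newly added point itself is extreme and is ejected (no change to the core), while for any $x \leq D$ the outlier at $D$ is extreme and is ejected, so by (\ref{munew}) the shift is $\mu'_{\rm new} - \mu'_{\rm old} = (x-D)/(N-1) \leq 0$, i.e.\ in the \emph{wrong} direction; and for $N \geq 4$ no choice of $x$ makes a cluster point at $0$ the new extreme. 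So your requirements (i), (ii), (iii) cannot be met simultaneously in this configuration, and the $\Theta(D/N)$ rightward shift per step on which Stage~II rests is unobtainable. The final paragraph of your proposal anticipates that ``some case analysis is unavoidable,'' but the obstruction here is not a case to be handled --- it is a configuration from which your transport rule cannot move at all. Some form of the paper's explicit homogenization is required before the transport phase, and Stage~I as described does not supply it.
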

\begin{proof}
Fix $a \in [0,1]$.
Let $\rho \in (0,1)$ and $\eps>0$. It suffices to suppose that $\eps \in (0,\rho)$, since $E_{a,\eps} (t) \subseteq E_{a,\eps'} (t)$ for $\eps' \geq \eps$.
Suppose that $S_\rho$ occurs, so that $m_N (0) \geq \rho$ with $m_N(0)$ defined at (\ref{spacing}).
For ease of notation we list the points of $\XX_N(0)$ in increasing order as $0<X_1< X_2 < \dots < X_N<1$.
Let $M = \lfloor N/2\rfloor$.
 
Let $\nu=\eps/N^2$.
The following argument shows how one can arrive at a configuration at a finite (deterministic)
time $t_0$ where all of $X_1(t_0),\ldots,X_N(t_0)$ lie inside $(a-\eps,a+\eps)$
with a positive (though possibly very small) probability.

Let us call the points which are present at time $0$ \emph{old points}; the points which will gradually replace this set will be called \emph{new points}.
We will first describe an event by which all the old points are removed and replaced by new points arranged approximately equidistantly
 in the interval $[X_M, X_{M+1}]$, and then we will describe an event by which such a configuration can migrate to the target interval.

{\sc Step} 1.
Starting from time $0$, iterate the following procedure until a new point becomes an extreme point.
The construction is such that at each step, the extreme point is one of the old points, either at the
 extreme left or right of the configuration. At each step, the extreme old point is removed and replaced
 by a new $U[0,1]$ point to form the configuration at the next time unit. We describe an event of positive probability by requiring the successive
 new arrivals to fall in particular intervals, as follows.
The first old point removed from the \emph{right} is replaced by a new point in $( X_M+\nu,X_M+\nu+\delta )$, where $\delta \in (0,\nu)$
will be specified later.
Subsequently, the $i$th point ($i \geq 2$) removed from the right is replaced by a new point
 in $( X_M+i\nu,X_M+i\nu+\delta )$.
We call this subset of new  points the \emph{accumulation on the left}. On the other hand,
the $i$th extreme point removed from the {\em left} ($i \in \N$) is replaced by a new point
in $( X_{M+1}-i \nu,X_{M+1}- i \nu+\delta )$.
 This second subset of new points will be called  the \emph{accumulation on the right}.

During the first $M$ steps of this procedure, the new points are necessarily internal points
of the configuration and so are never removed. 
Therefore, there will be a time $t_1\in [M,N]$ at which, for the first time,
 one of the new points becomes either the leftmost or rightmost
point of $\XX_N (t_1)$; suppose that it is the rightmost, since  the argument in the other case is analogous. 
If at time $t_1$ the accumulation on the right is non-empty, we continue to perform the procedure described in
 {\sc Step} 1, but now allowing ourselves to remove new
points from the accumulation on the right. So we continue putting extra points on the accumulation on the left whenever the rightmost point is removed, and similarly putting extra points to the accumulation on the right whenever the leftmost point is removed, as described for
 {\sc Step} 1.
Eventually we will have either (a) a configuration where all the new points of the left or the right accumulation are completely removed, and there are still some of the old points left, or (b) a configuration where all old points are removed.
The next step we describe separately for these two possibilities.

{\sc Step} 2(a). Without loss of generality, suppose that the accumulation on the right is empty, so the configuration
consists of  $k$ points of the left accumulation and $N-k$  old points remaining to the left of $X_M$ (including $X_M$ itself).
Note that {\sc Step} 1 produces at least $M$ new points, so $M \leq k \leq N-1$, since by assumption we have at least one old point remaining.
Let us now denote the  points of the configuration  $x_1 < x_2 < \cdots < x_N$ so that $x_{N-k} = X_M$, and
by the construction in {\sc Step} 1, $x_{N-k+i} \in ( X_M+i\nu,X_M+i\nu+\delta )$ for $i=1,2,\dots,k$.
Provided that $k \leq N-2$, so that there are at least 2 old points, we will show that
$x_1$ is necessarily the extreme point of the configuration.
Indeed, writing $\mu=\mu_N(x_1,\ldots,x_N)$, using the fact that $x_{N-k+i} \geq x_{N-k} + i \nu$ for
$1 \leq i \leq k$ and $x_{N} \leq x_{N-k} + k \nu + \delta$, we have
\begin{align*}
\mu -\frac{x_1+x_N}2 & 
\geq \frac{x_1+\cdots+x_{N-k}+kx_{N-k}+ \frac{1}{2} \nu k(k+1)  }{N}
 - \frac{x_1+ x_{N-k}+\nu k+ \delta}{2} \\
& = 
\frac{1}{2N} \left( 2 x_1 + \cdots + 2 x_{N-k} + (2k-N) x_{N-k} - N x_1 +   
 \nu k(k+1 -N) - \delta N \right) .\end{align*}
The old points all have separation at least $\rho$, so for $1 \leq i \leq N-k$, $x_i \geq x_1 + (i-1) \rho$,
and hence
\[ 2 x_1+\cdots+2 x_{N-k}+(2 k - N) x_{N-k} \geq N x_1 +   \rho (N-k-1) (N-k) +   \rho (2k- N) (N-k-1) .\]
It follows, after simplification, that
\begin{align*}
\mu -\frac{x_1+x_N}2
&\ge \frac{1}{2N} \left(    k (N-k-1) (\rho - \nu) - \delta N    \right) \\
& \geq \frac{1}{2N} \left( (N-2) (\rho - \nu ) - \delta N \right) ,
\end{align*}
provided $1 \leq k \leq N-2$. By choice of $\nu$, we have $\nu \leq \rho /9$ and it follows that the last
displayed expression is positive provided $\delta$ is small enough compared to $\rho$ ($\delta < \rho/4$, say).
 Hence $|x_1-\mu| > |x_N-\mu|$.
 Thus next we remove $x_1$. We replace it similarly to the procedure in {\sc Step} 1, but now building up the accumulation on the {\em left}.
We can thus iterate this step, removing old points from the left and building up the accumulation on the left, while
keeping the accumulation on the right empty, until we get just one old point remaining (i.e.\ until $k=N-1$); this last old
point will be $X_M$. At this stage, after a finite number of steps,
 we end up with a configuration where  the set of points $x_1<x_2<\dots<x_N$ satisfies
$x_i\in [ X_M +(i-1)\nu ,X_M + (i-1) \nu + \delta ]$, $i=1,2,\dots,N$.

{\sc Step} 2(b).
Suppose that the configuration is such that all old points have been removed
 but both left and right accumulations are non-empty.
 Repeating the procedure of {\sc Step} 1, replacing rightmost points by building the left accumulation and leftmost points by building the right accumulation,
   we will also, in a finite number of steps, obtain  a set points $x_i$ such that
   $x_i\in [ b+(i-1) \nu,b+(i-1) \nu+\delta ]$, $i=1,2,\dots,N$, for some $b\in [0,1]$.

{\sc Step} 3.
Now we will show how one can get to the situation where all  points
 lie inside the interval $(a-\eps,a+\eps)$
starting from any configuration in which
\begin{align}\label{eqx*}
x_i \in [ b+i\nu,b+i\nu+\delta ],\ i=1,2,\dots,N-1 ,
\end{align}
where $b \in [0,1]$ and $x_1 < \cdots < x_{N-1}$ are the core points of the configuration (i.e., with
  the extreme point removed).
  We have shown in {\sc Step} 1 and {\sc Step} 2 how we can achieve such a configuration in a finite time with a positive probability.
  Suppose that $a > b$; the argument for the other case is entirely analogous. We describe an event of positive probability
  by which the entire configuration can be moved to the right.

Having just removed the extreme point, we stipulate that the new point
 $y_1$ belong  to $( b+N\nu-6\delta,b+N\nu-5\delta)$, so $y_1 > x_{N-1}$ is the new
 rightmost point provided $\delta < \nu/7$.
 Then to ensure that $x_1$, and not $y_1$, is the most extreme point we need
$$
\frac{x_1+y_1}2  -\left[b+\nu\frac{N+1}2\right]< \frac{x_1+\dots+x_{N-1}+y_1}N -\left[b+\nu\frac{N+1}2\right] .
$$
The left-hand side of the last inequality is less than $-2\delta$
while the right-hand side is more than $ -\frac{6\delta}N$, so the inequality
is indeed satisfied provided $N\ge 3$.

\begin{figure}[!h]
\center
\setlength{\unitlength}{1mm}
\begin{picture}(150,20)(-2,0)
 \shortstack[c]{
\thicklines
\put(0, 10){\line(1, 0){150}}
\put(-5,4){$b+\nu$}
\put(0, 12){\line(0, -1){4}}
\put(0, 12){\vector(1, 0){7}}
\put(7, 12){\vector(-1, 0){7}}
\put(3, 15){$\delta$}
\put(3,10){\circle*{2}}
\put(35,4){$b+2\nu$}
\put(40, 12){\line(0, -1){4}}
\put(40, 12){\vector(1, 0){7}}
\put(47, 12){\vector(-1, 0){7}}
\put(43, 15){$\delta$}
\put(43,10){\circle*{2}}
\put(65,4){$\cdots$}
\put(95,4){$b+(N-1)\nu$}
\put(100, 12){\line(0, -1){4}}
\put(100, 12){\vector(1, 0){7}}
\put(107, 12){\vector(-1, 0){7}}
\put(103, 15){$\delta$}
\put(103,10){\circle*{2}}
\put(135,4){$b+N\nu$}
\put(140, 12){\line(0, -1){4}}
\put(140, 12){\vector(-1, 0){20}}
\put(120, 12){\vector(1, 0){20}}
\put(140, 12){\vector(-1, 0){13}}
\put(120, 12){\vector(1, 0){7}}
\put(122, 15){$\delta$}
\put(131, 15){$5\delta$}
\put(123,10){\circle*{2}}
}
\end{picture}
\caption{Schematic of a configuration at the start of {\sc Step} 3. The  disks represent the points $x_1, x_2, \ldots, x_{N-1}$ and,
on the extreme right, the new point $y_1$.}
\label{fig3}
\end{figure}
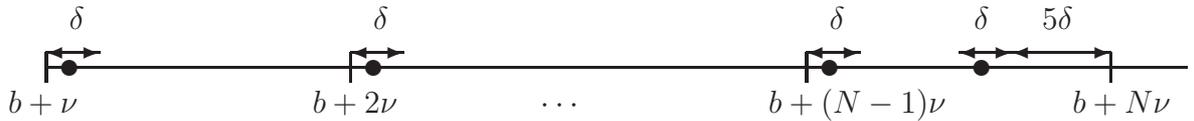

Hence at the next step $x_1$ is removed.
Our new collection of core points is $x_2 < \cdots < x_{N-1} < y_1$.
We stipulate that the next new point $y_2$ arrive in $( b+(N+1)\nu-18\delta,b+(N+1)\nu -17\delta)$. So again, for $\delta$ small enough
($\delta <  \nu /13$ suffices), $y_2 > y_1$ and the
newly added point ($y_1$) becomes the rightmost point in the configuration.
Again, to ensure that the leftmost point ($x_2$)
is now the extreme one, we require
$$
\frac{x_2+y_2}2  -\left[b+\nu\frac{N+3}2\right]< \frac{x_2+\cdots+x_{N-1}+y_1+y_2}N -\left[b+\nu\frac{N+3}2\right].
$$
The left-hand side of the last inequality is less than $-8\delta$, while the right-hand side is more
 than $-24\delta/N$, and so the displayed inequality is true provided $N\ge 3$.

We will repeat this process until we remove the rightmost core point present at the start of {\sc Step} 3, namely $x_{N-1}$,
located in $[ b+(N-1)\nu,b+(N-1)\nu+\delta ]$.
We will demonstrate how we can do this, in succession removing points from the left of the configuration and at each step replacing
them by points on the right with careful choice of locations for the new points.
We consecutively put new points $y_k$ at locations in intervals
$$
\Delta_k:=( b+(N-1+k)\nu-2\cdot 3^k \delta,b+(N-1+k)\nu-2\cdot 3^k \delta+\delta ) ,
$$
for $k=1,2,\dots, N-1$.
We have just shown that for $k=1,2$ this procedure will maintain the leftmost point ($x_k$) 
as the extreme one.
Let us show that this is true for all $1 \leq k \leq N-1$, by an inductive argument. Indeed, suppose that the original
points $x_1,x_2,\dots,x_{k-1}$ have been removed,  the successive
 new points $y_j$
 are located in $\Delta_j$, $j=1,2,\dots,k-1$, and that the replacement for the most recently removed point
 $x_{k-1}$ is the new point $y_k$.  Place the new point $y_{k}$ in $\Delta_{k}$. Provided
 $\delta < \frac{\nu}{4 \cdot 3^{k-1} +1}$, $y_k > y_{k-1}$ and $y_k$
 is the rightmost point of the new configuration,
 while the leftmost point is $x_{k}\in [ b+\nu k,b+\nu k+\delta ]$. Since $N\ge 3$ we have
\begin{align*}
\frac{x_{k}+y_{k}}{2}&\le b+\left[\frac{N-1}{2}+k\right]\nu- [3^{k}-1]\delta
\le b+\left[\frac{N-1}{2}+k\right]\nu- \frac{2(3+3^2+\dots+3^{k})\delta}{N}\\
&\le \frac{x_{k}+\dots+x_{N-1}+y_1+\dots+y_{k}}{N} ,
\end{align*}
thus ensuring that the leftmost point $x_k$, and not $y_{k}$, is the farthest from the centre of mass.

Thus, provided $\delta <  3^{-N} \nu$, say, we proceed to remove all the points $x_k$ and
  end up with a new collection of points $x_1',\dots,x_{N-1}'$ satisfying the property
$$
x_i'\in [ b'+i\nu,b'+i\nu+\delta'],\ i=1,2,\dots,N-1 ,
$$
where $b'=b+(N-1)\nu- \delta'$ and $\delta':=3^N\delta$ ($>2\cdot 3^{N-1}\delta)$.
Thus the situation is similar to the one in~(\ref{eqx*}) but with $b$ replaced by $b' > b + \nu$,
so the whole ``grid'' is shifted to the right.
Hence, provided $\delta$ is small enough, and $\delta'$ and its subsequent analogues remain such that
$\delta' < 3^{-N} \nu$, we can repeat the above procedure and move points to the right again, etc., a finite number of times
(depending on $|b-a|/\nu$)
 until the moment when all the new points are indeed in $(a-\eps,a+\eps)$, and the probability of making all those steps is strictly positive.
In particular, we can  check that taking $ \delta < 3^{-2N/\nu} \nu$ will suffice.

All in all, we have performed a finite number of steps,
which can be
bounded above in terms of $N$, $\rho$, and $\eps$ but independently of $a$,
and each of which
required a $U[0,1]$ variable to be placed in a small interval (of width less than $3^{-N} \nu$) and so has
positive probability, which can be bounded below in terms of $N$, $\rho$, and $\eps$.
So overall the desired transformation of the configuration has positive probability
depending on $N$, $\rho$, and $\eps$, but not on $a$.
\end{proof}

\begin{proof}[Proof of Proposition \ref{limit}.]
Write $\mu' (t) := \mu_{N-1} ( \XX'_N(t) )$. Let $I \subseteq [0,1]$
be a non-null interval. We can (and do) choose $a \in (0,1)$ and $\eps' >0$
such that $I' := [a-\eps' , a+\eps' ] \subseteq I$.
Also take $I'' := [a -\eps , a+ \eps  ] \subset I'$ for $\eps = (4 B C)^{-1} N^{-1/2} \eps'$,
where $C$ is the constant in   Lemma \ref{lem7} and $B \geq 1$ is an absolute constant chosen so that $\eps < \eps'/4$ for all $N \geq 3$.
Fix $\rho \in (0,1)$.
It follows from Lemma \ref{move} that, for some $\delta_0 >0$ and $t_0 \in \N$,
depending on $\eps$,
\begin{equation}
\nonumber
\Pr [ \{ \mu' (t_0) \in I'' \} \cap \{ D(t_0) \leq 2 \eps  \} \mid \XX_N(0) ] \geq \delta_0 \1 ( S_\rho) . \end{equation}
  By Lemma \ref{lem2}, we have that $D(t_0) \leq 2 \eps $
implies that $F(t_0) \leq 2 N \eps^2 <  ( \eps' / (2BC) )^2$, so that
$t_0 \geq \nu_{\eps'/(2BC)}$, where $\nu_\cdot$ is as defined just before Lemma \ref{lem7}.
Applying Lemma \ref{lem7} with this choice of $t_0$ and with the $\eps$ there equal to $\eps'/(2 B C)$, we obtain, by Markov's inequality,
\[ \Pr \left[ \max_{t \geq t_0 }  | \mu' (t) - \mu' ( t_0 )  | \leq 3\eps'/4 \mid \FF_{t_0} \right] \geq 1/3 , \as \]
 It follows that, given $\XX_N(0)$, the event
\[ \{ \mu' (t_0) \in I'' \} \cap \{ D(t_0) \leq 2 \eps \}  \cap \{
   | \xi_N - \mu' ( t_0 )  | \leq  3 \eps' /4 \} \]
  has probability at least $(\delta_0 /3) \1 ( S_\rho)$,
  and on this event we have $| \xi_N - a| \leq \eps + (3\eps'/4) < \eps'$, so $\xi_N \in I$.
Hence  (\ref{lowerbound}) follows.

For the final statement in the proposition, suppose that $\XX_N(0)$ consists of independent $U[0,1]$ points.
In this case $m_N(0)$ defined at (\ref{spacing}) is the minimal spacing in the induced partition
of $[0,1]$ into $N+1$ segments, which has the same distribution as $\frac{1}{N+1}$ times a single spacing,
and in particular has density $f(x) =  N (N+1)  (1- (N+1) x)^{N-1}$ for $x \in [0, \frac{1}{N+1}]$ (cf Section \ref{appendix}).
Hence for any $\rho \in [0,\frac{1}{N+1}]$, we have $\Pr [ S_\rho ] =   (1-(N+1) \rho)^N$,
which is positive for   $\rho = \frac{1}{2N}$, say. Thus taking expectations in (\ref{lowerbound}) yields the final statement
in the proposition.
\end{proof}

\subsection{Explicit calculations for $N=3$}
\label{sec:three}

For this section we take $N=3$, the smallest nontrivial example.
In this case we can perform some explicit calculations to obtain information about
 the   distribution of $\xi_3$.
In fact, we work with a slightly modified version of the model, avoiding certain `boundary effects', 
to ease  computation.
Specifically, we do not use $U[0,1]$ replacements but,
given $\XX_3(t)$, we take $U_{t+1}$ to be uniform on the interval $U[ \min \XX'_3 (t) - D(t) , \max \XX'_3 (t) + D(t) ]$.
If this interval is contained in $[0,1]$ for all $t$, this modification would have no effect on the value of $\xi_3$ realized
(only speeding up the   convergence), but the fact that now $U_{t+1}$ might be outside $[0,1]$ {\em does} change the model.
 
For this  modified model, the argument for Theorem \ref{thm1}
 follows through with minor changes, although we essentially reprove
the conclusion of Theorem \ref{thm1} in this case when we prove the following result, which gives an explicit description of the limit distribution.
Here and subsequently `$\stackrel{d}{=}$' denotes equality in distribution.

\begin{prop}
\label{prop3}
Let $d = 1$ and $N = 3$ and work with the modified version of the process just described. Let $\XX_3(0)$ consist of $3$ distinct points in $[0,1]$.
Write $\mu := \mu_2 ( \XX_3'(0) )$ and $D := D_2(\XX_3'(0))$.
There exists a random $\xi_3 := \xi_3 (\XX_3(0)) \in \R$ such that
$\XX'_3(t) \toas ( \xi_3, \xi_3)$  as $t \to \infty$. The distribution of $\xi_3$ can be characterized via $\xi_3 \eqd \mu + D L$, where $L$
is independent of $(\mu, D)$, $L \eqd -L$, and the distribution of $L$ is determined by the distributional solution to the fixed-point equation
\begin{equation}
\label{Lfixed} L  \eqd \begin{cases}
 - \frac{1+U}{2} + U L & \textrm{with~probability~} \frac{1}{3} \\
  - \frac{2-U}{4} + \frac{U}{2} L & \textrm{with~probability~} \frac{1}{6} \\
 \phantom{-}    \frac{2-U}{4} + \frac{U}{2} L & \textrm{with~probability~} \frac{1}{6} \\
 \phantom{-}     \frac{1+U}{2} + U L & \textrm{with~probability~} \frac{1}{3} ,\end{cases} \end{equation}
    where $\Exp [ |L|^k ] < \infty$ for all $k$, and $U \sim U[0,1]$. Writing $\theta_k := \Exp [ L^k]$, we have $\theta_k =0$ for odd $k$, and
$\theta_2 = \frac{7}{12}$, $\theta_4 = \frac{375}{368}$, and $\theta_6 = \frac{76693}{22080}$.
In particular,
\begin{equation}
\label{xi2}
\Exp[ \xi_3 ] = \Exp [ \mu] , ~ \Exp [ \xi_3^2 ] = \Exp [ \mu^{ 2} ] + \frac{7}{12} \Exp [ D^{ 2} ],
~\textrm{and}~   \Exp [ \xi_3^2 ] = \Exp [ \mu^{ 3} ] + \frac{7}{4} \Exp [ \mu D^{ 2} ].\end{equation}
In the case where  $\XX_3(0)$  contains $3$ independent $U[0,1]$ points,   $\Exp [ \xi_3^k ] = \frac{1}{2}, \frac{1}{3}, \frac{1}{4}$ for $k = 1,2,3$ respectively.
If $\XX_3(0) = (   \frac{1}{4}, \frac{1}{2}, \frac{3}{4} )$, then
$\Exp [ \xi_3^k ]= \frac{1}{2},
\frac{29}{96}, \frac{13}{64}, \frac{873}{5888}$ for $k =1,2,3,4$.
\end{prop}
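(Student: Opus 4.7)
The plan is: (i) establish convergence in the modified model; (ii) derive the fixed-point equation~(\ref{Lfixed}) by case analysis on the first step; (iii) show uniqueness of the distributional fixed point and compute $\theta_2, \theta_4, \theta_6$; (iv) specialise to the two initial conditions. For (i), the modification simplifies the analysis of Section~\ref{sec:proof}. Since $U_{t+1}$ is uniform on an interval of half-width $\tfrac{3}{2} D(t)$ centred at $\mu'(t) := \mu_2(\XX'_3(t))$, the events $A_{t+1}$ and $A'_{t+1}$ of Lemma~\ref{lem3} satisfy $\Pr [ A_{t+1} \mid \FF_t ] = 1$ and $\Pr [ A'_{t+1} \mid \FF_t ] = \tfrac{1}{6}$ uniformly in $t$, so the Lyapunov function $F(t) = \tfrac{1}{2} D(t)^2$ contracts in conditional expectation by a positive deterministic fraction at every step. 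This gives exponential decay of $D(t)$ and, via the telescoping argument of Lemma~\ref{lem7}, $\mu'(t) \to \xi_3$ a.s., so that $\XX'_3(t) \to (\xi_3, \xi_3)$ a.s.\ for some random $\xi_3$.

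For (ii), the dynamics are invariant under simultaneous translation and scaling of all points and the noise $U_t$, so $L := (\xi_3 - \mu'(0))/D(0)$ has a distribution that does not depend on $(\mu'(0), D(0))$. Applying the Markov property at time $1$ gives $\xi_3 = \mu'(1) + D(1) L'$ with $L' \eqd L$ independent of $(\mu'(1), D(1))$, hence
\[
L \eqd \frac{\mu'(1)-\mu'(0)}{D(0)} + \frac{D(1)}{D(0)} L' .
\]
To obtain~(\ref{Lfixed}) I would compute $\mu'(1)$ and $D(1)$ explicitly by casework on $W := (U_1 - \mu'(0))/D(0) \sim U[-\tfrac{3}{2}, \tfrac{3}{2}]$. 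The three candidate points are $\mu'(0) + D(0) \alpha$ for $\alpha \in \{-\tfrac{1}{2}, \tfrac{1}{2}, W\}$ with barycentre at $\mu'(0) + D(0) W/3$; direct comparison of distances identifies the extreme as $\mu'(0) - D(0)/2$ when $W > 0$ and $\mu'(0) + D(0)/2$ when $W < 0$, producing four sub-cases $W \in (-\tfrac{3}{2}, -\tfrac{1}{2}), (-\tfrac{1}{2}, 0), (0, \tfrac{1}{2}), (\tfrac{1}{2}, \tfrac{3}{2})$ with probabilities $\tfrac{1}{3}, \tfrac{1}{6}, \tfrac{1}{6}, \tfrac{1}{3}$. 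The substitutions $U = 2W - 1$ on $W \in (\tfrac{1}{2}, \tfrac{3}{2})$ and $U = 1 - 2W$ on $W \in (0, \tfrac{1}{2})$, together with the $W \mapsto -W$ symmetry, rewrite the recursion as~(\ref{Lfixed}), and the symmetry $L \eqd -L$ is inherited from $W \eqd -W$.

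For (iii), the multiplicative coefficient $B$ of $L'$ in~(\ref{Lfixed}) satisfies $0 \le B \le 1$ a.s.\ with $\Exp[B^p] < 1$ for every $p \ge 1$, so the map $\nu \mapsto \mathcal{L}(A + B L)$ (with $L \sim \nu$ independent of $(A,B)$) is a strict contraction in the Wasserstein-$p$ metric on the space of distributions with finite $p$-th moment; Banach's theorem then gives a unique fixed point with finite $p$-th moment, so $L$ is this fixed point and $\Exp[|L|^k] < \infty$ for every $k$. Taking $k$-th powers in~(\ref{Lfixed}) and using independence of $L'$ from $(A,B)$ yields the recursion
\[
\theta_k \bigl(1 - \Exp[B^k]\bigr) = \sum_{j=0}^{k-1} \binom{k}{j} \Exp[A^{k-j} B^j] \, \theta_j , \quad \theta_0 = 1 ,
\]
with odd $\theta_k = 0$ by the $L \eqd -L$ symmetry. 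Substituting the piecewise forms of $(A,B)$ and integrating against $U \sim U[0,1]$ produces $\theta_2, \theta_4, \theta_6$; I expect the main obstacle to be bookkeeping in this arithmetic, which I would automate to verify the stated fractions.

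Finally, (\ref{xi2}) is the binomial expansion of $\Exp[(\mu + DL)^k]$ using independence of $L$ from $(\mu, D)$ together with $\theta_1 = \theta_3 = 0$. For $\XX_3(0) = (\tfrac{1}{4}, \tfrac{1}{2}, \tfrac{3}{4})$, the two outer points are tied in distance from the barycentre $\tfrac{1}{2}$, so random tie-breaking produces $(\mu, D) = (\tfrac{3}{8}, \tfrac{1}{4})$ or $(\tfrac{5}{8}, \tfrac{1}{4})$ each with probability $\tfrac{1}{2}$; substituting into the analogues of~(\ref{xi2}) for $k \le 4$ (the $k=4$ case using $\theta_4$) yields the stated moments. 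For the uniform initial condition, one splits on whether $Y_1$ or $Y_3$ is extreme (i.e.\ on the sign of $Y_1 + Y_3 - 2 Y_2$) and computes the joint moments $\Exp[\mu^a D^b]$ by integrating against the order-statistic density $6 \cdot \1(0 < y_1 < y_2 < y_3 < 1)$ on each half, and substituting into~(\ref{xi2}) gives $\Exp[\xi_3^k] = 1/(k+1)$ for $k = 1, 2, 3$.
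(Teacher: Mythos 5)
Your overall strategy mirrors the paper's: convergence of the core, a scaling/translation reduction to $L:=(\xi_3-\mu'(0))/D(0)$, a one-step case analysis to derive the fixed-point equation, and a moment recursion. Two genuine differences and two small issues are worth flagging.

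On convergence, you invoke the machinery of Section~\ref{sec:proof} (with the pleasant observation that $\Pr[A_{t+1}\mid\FF_t]=1$ and $\Pr[A'_{t+1}\mid\FF_t]=\tfrac16$ in the modified model), whereas the paper works directly from the explicit one-step recursion for $(\mu'(t),D(t))$ and the resulting formula $m_k(t) = (\tfrac{2+2^{-k}}{3(k+1)})^t D(0)^k$. Both work; the paper's route is more self-contained here because it also needs the recursion for $(\mu'(t),D(t))$ anyway for the moment computations, while yours reuses Lemma~\ref{lem7} at the cost of re-verifying its hypotheses in the modified model.

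On uniqueness and finiteness of moments, you propose a Wasserstein-$p$ contraction argument. The paper instead iterates the domination $|L|\preceq 1 + U|L|$ to get $|L|\preceq 1+Z$ with $Z$ Dickman, concluding both that all moments of $L$ are finite and (since the Dickman distribution is moment-determinate) that $L$ is the \emph{unique} distribution with these moments satisfying (\ref{Lfixed}). Your contraction argument is valid as far as it goes, but as stated it has a circularity: Banach's theorem gives a unique fixed point $\nu_p$ in the space of laws with finite $p$-th moment, yet to identify $L$ (defined pathwise as a limit) with $\nu_p$ you already need to know $L$ has finite $p$-th moment — which is precisely what you then claim to deduce. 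You would need either the Dickman-domination bound (as the paper uses) to place $L$ in the right space, or an argument that the forward iterates from $\delta_0$ converge weakly to the same law for every $p$.

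Two smaller points: the substitution $U=2W-1$ on $W\in(\tfrac12,\tfrac32)$ is off by a factor of two; since $W$ is conditionally uniform on an interval of length $1$, the correct change of variable is $U=W-\tfrac12$ (your $U=1-2W$ on $W\in(0,\tfrac12)$ is correct, that interval having length $\tfrac12$). And in your moment recursion $\theta_k(1-\Exp[B^k])=\sum_{j<k}\binom{k}{j}\Exp[A^{k-j}B^j]\theta_j$, the indices on $A$ and $B$ are swapped relative to the paper's (\ref{thetak}), which is harmless but worth noticing when comparing $a(k,j)$, $b(k,j)$.
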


We give the proof of Proposition \ref{prop3} at the end of this section. First we
state one consequence of the fixed-point representation (\ref{Lfixed}).

\begin{prop}
\label{Lcontinuous}
 $L$ given by (\ref{Lfixed}) has an absolutely continuous distribution.
\end{prop}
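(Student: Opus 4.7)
The plan is to first rule out atoms in the law of $L$ directly from (\ref{Lfixed}), then to iterate the fixed point once and argue case-by-case.

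\emph{Step 1: no atoms.} For each of the four branches in (\ref{Lfixed}), the event $\{A+BL'=l\}$, conditional on $L'$, reduces to an affine equation in the independent uniform $U$ with coefficient $L'\pm\tfrac12$ (the sign depending on the branch). For $L'\notin\{\pm\tfrac12\}$ this specifies a single value of $U$, giving zero probability; only $L'\in\{\pm\tfrac12\}$ can contribute mass, and then only at $l\in\{\pm\tfrac12\}$. Reading off the coefficients and using the symmetry $L\eqd-L$, the common atom mass $p:=\Pr[L=\tfrac12]=\Pr[L=-\tfrac12]$ satisfies $p=\tfrac16 p+\tfrac13 p=\tfrac12 p$, hence $p=0$.

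\emph{Step 2: iterate and condition on branches.} Let $(A_i,B_i)$ be i.i.d.\ copies of the pair in (\ref{Lfixed}) and let $L''\eqd L$ be independent of them; then $L\eqd S_2+\Pi_2 L''$ with $S_2:=A_1+B_1A_2$ and $\Pi_2:=B_1B_2$. Condition on the branch labels $(K_1,K_2)\in\{1,2,3,4\}^2$: given these, $S_2$ is a polynomial of bidegree $(1,1)$ in independent $U_1,U_2\sim U[0,1]$, while $\Pi_2=c_{K_1K_2}U_1U_2$ for some $c_{K_1K_2}\in\{\tfrac14,\tfrac12,1\}$. A direct computation (for $(K_1,K_2)=(1,1)$, say, the Jacobian of $(U_1,U_2)\mapsto(S_2,\Pi_2)$ equals $-U_1$) shows that for 8 of the 16 pairs the Jacobian is a nonzero multiple of $U_1$; for these ``non-degenerate'' pairs the joint law of $(S_2,\Pi_2)$ is absolutely continuous on $\R^2$. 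By Fubini, for a.e.\ $\pi$ the conditional law of $S_2$ given $\Pi_2=\pi$ has a density on $\R$, and so $L|(K_1,K_2)$, being conditionally on $\Pi_2=\pi$ the convolution of this density with the law of $\pi L''$, is absolutely continuous.

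\emph{Step 3: degenerate pairs.} In the remaining 8 pairs the linear-in-$U_1$ term of $S_2$ cancels, yielding $S_2=c'+d'\Pi_2$ for some $c',d'\in\{\pm\tfrac12\}$; thus $L|(K_1,K_2)\eqd c'+\Pi_2(d'+L'')$. Here $\Pi_2=c_{K_1K_2}U_1U_2$ has an explicit density on an interval, and by Step 1 the independent variable $d'+L''$ is almost surely nonzero. Then for any null set $E\subset\R$,
\[ \Pr[\Pi_2(d'+L'')\in E] = \Exp\bigl[\Pr[\Pi_2\in E/(d'+L'')\mid L'']\,\1\{d'+L''\neq 0\}\bigr] = 0, \]
since $|E/w|=0$ for $w\neq 0$ and $\Pi_2$ is absolutely continuous. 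Hence $L|(K_1,K_2)$ is absolutely continuous in these degenerate cases too, and mixing over the 16 case pairs gives the result.

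The main obstacle is that joint absolute continuity of $(S_2,\Pi_2)$ fails in half of the case pairs (the pair is supported on a line), so one cannot appeal to joint absolute continuity uniformly; Step 1 (no atoms) is precisely what allows the degenerate product $\Pi_2(d'+L'')$ to still be absolutely continuous.
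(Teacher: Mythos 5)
Your proof is correct, but it takes a more circuitous route than the paper's. The structural fact you uncover only at the two-step level --- that in the ``degenerate'' pairs $S_2=c'+d'\Pi_2$, so $L=c'+\Pi_2(d'+L'')$ --- in fact holds already after a \emph{single} iteration. Writing each branch of (\ref{Lfixed}) as $A+BL$, the constant and coefficient satisfy $A=c+dB$ with $c,d\in\{\pm\tfrac12\}$, so each branch reads $\pm\tfrac12+V(L\pm\tfrac12)$ with $V\in\{U,U/2\}$. Every one-step branch is therefore ``degenerate'' in your sense, and the non-degenerate Jacobian case never arises. The paper exploits this directly: your Step 1 (no atoms at $\pm\tfrac12$, computed exactly as the paper does) together with the fact that an absolutely continuous $V$ times an independent, almost surely nonzero variable is absolutely continuous (the paper's Lemma~\ref{lemconts1}) shows that each of the four branches is absolutely continuous, and the mixture lemma (Lemma~\ref{lemconts2}) finishes. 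Your two-iteration decomposition and the associated Jacobian computation --- in the obvious notation the Jacobian is $e_1^2e_2(d_1+c_2)U_1$, nonzero a.e.\ precisely when $d_1+c_2\ne 0$ --- are sound, but they amount to redoing, case by case, work that the one-step affine coupling $A=c+dB$ already renders unnecessary. You found the two real ingredients (the no-atom argument and the product-with-nonzero-factor argument); the extra machinery comes from not noticing that the one-step branch is already of the needed form.
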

\begin{proof}
It follows from (\ref{Lfixed}) that
\begin{align*}
\Pr\left[ L=\tfrac 12\right ] &=
\tfrac 13 \, \Pr\left[U\left(L-\tfrac 12 \right)=1\right]
+\tfrac 16 \,\Pr\left[U\left(L+\tfrac 12 \right)=2\right]\\ 
& ~~{ }+\tfrac 16 \,\Pr\left[U\left(L-\tfrac 12 \right)=0\right]
+\tfrac 13 \,\Pr\left[U\left(L+\tfrac 12 \right)=0\right] .\end{align*}
The first two terms on the right-hand side of the last display are
zero, by an application of the first part of Lemma~\ref{lemconts1}
with $X=U$, $Y=L \pm 1/2$, and $a=1,2$. Also, since $U >0$ a.s.,
$\Pr [ U (L \mp \frac{1}{2}) = 0 ] = \Pr [ L = \pm \frac{1}{2}]$, and, by symmetry, 
$\Pr [ L=1/2 ]=\Pr [ L=-1/2]$.
Thus we obtain
 \begin{align*}  \Pr\left[ L=\tfrac 12\right ]
 = \tfrac 16 \,\Pr\left[ L=\tfrac 12 \right]
+\tfrac 13 \,\Pr\left[L=-\tfrac 12 \right]
=\tfrac 12 \,\Pr\left[L=\tfrac 12 \right] .
\end{align*}
Hence  $\Pr[L=1/2]=\Pr[L=-1/2]=0$.

Each term on the right-hand side of (\ref{Lfixed}) is
of the form $\pm \frac{1}{2} + V ( L \pm \frac{1}{2} )$
where $V$ is an absolutely continuous random variable, independent of $L$
(namely $U$ or $U/2$).
The final statement in Lemma~\ref{lemconts1} with the
fact that $\Pr [ L = \pm 1/2]=0$ shows that 
each such term is  absolutely continuous.
   Finally,    Lemma~\ref{lemconts2}
 completes the proof.
\end{proof}

In principle, the characterization (\ref{Lfixed}) can be used to recursively determine all the moments $\Exp [ L^k] =\theta_k$,
and the moments of $\xi_3$ may then be obtained by expanding $\Exp [ \xi_3 ^k ] = \Exp [ (\mu + D L)^k ]$.
However, the calculations soon become cumbersome, particularly as $\mu$ and $D$ are, typically, not independent:
we   give some distributional properties of $(\mu, D)$ in the case of a uniform random initial condition in Section \ref{appendix}.

Before giving the proof of Proposition \ref{prop3}, we comment on some simulations. Figure \ref{fig4}
shows histogram estimates for the distribution of $\xi_3$ for two initial distributions
(one deterministic and the other uniform random), and Table \ref{tab2} reports corresponding moment estimates, which may be compared to the
theoretical values given in Proposition \ref{prop3}. In the uniform case, we only computed the first $3$ moments analytically,
namely,
$\frac{1}{2}$, $\frac{1}{3}$, $\frac{1}{4}$ as quoted in Proposition \ref{prop3}; it is a curiosity that these coincide with the first 3
moments of the $U[0,1]$ distribution.

\begin{figure}[!h]
\begin{center}
\includegraphics[width=0.48\textwidth,clip=true,trim=0.65cm 1.6cm 0.8cm 0.8cm]{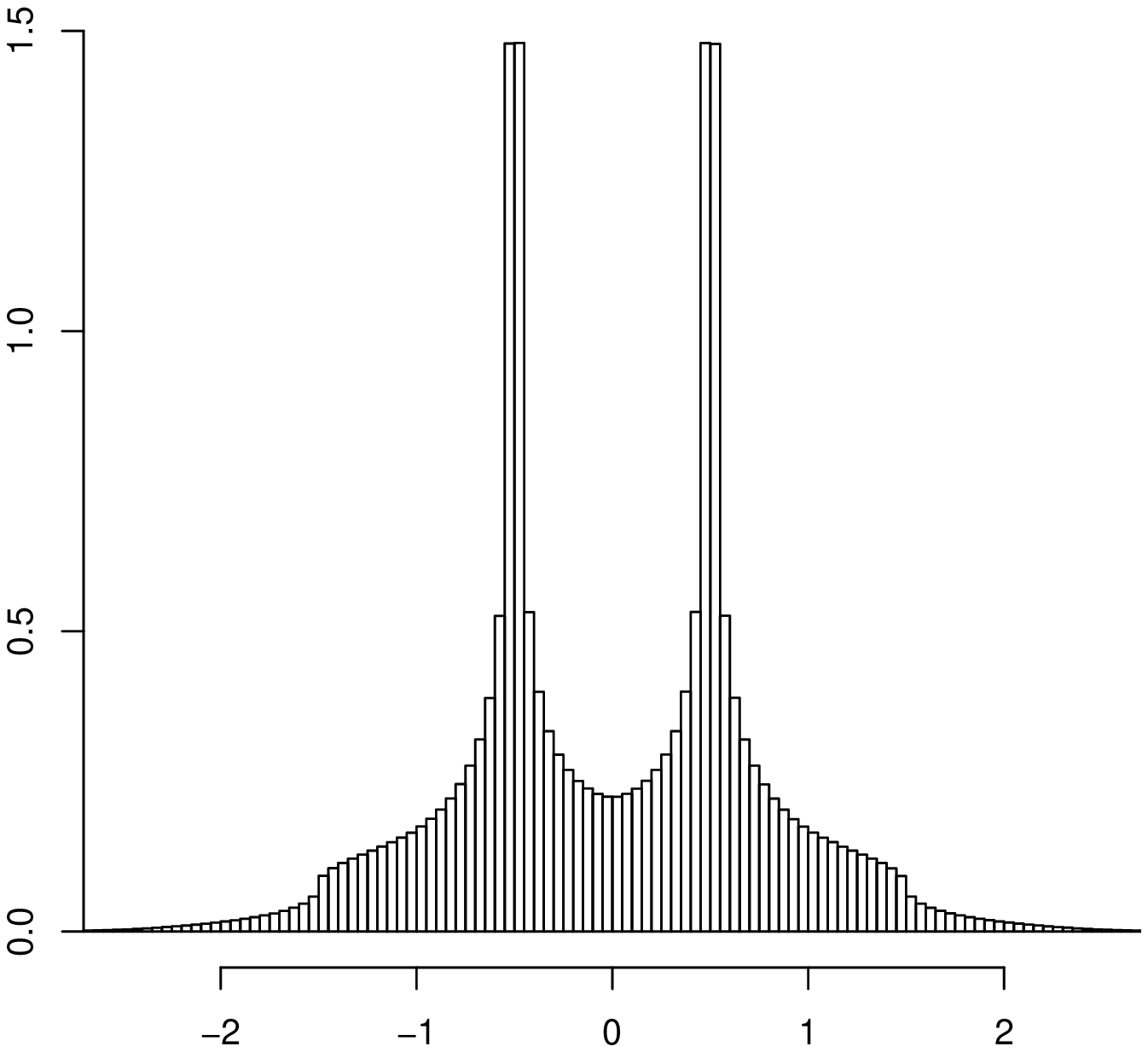}
\includegraphics[width=0.48\textwidth,clip=true,trim=0.65cm 1.6cm 0.8cm 0.8cm]{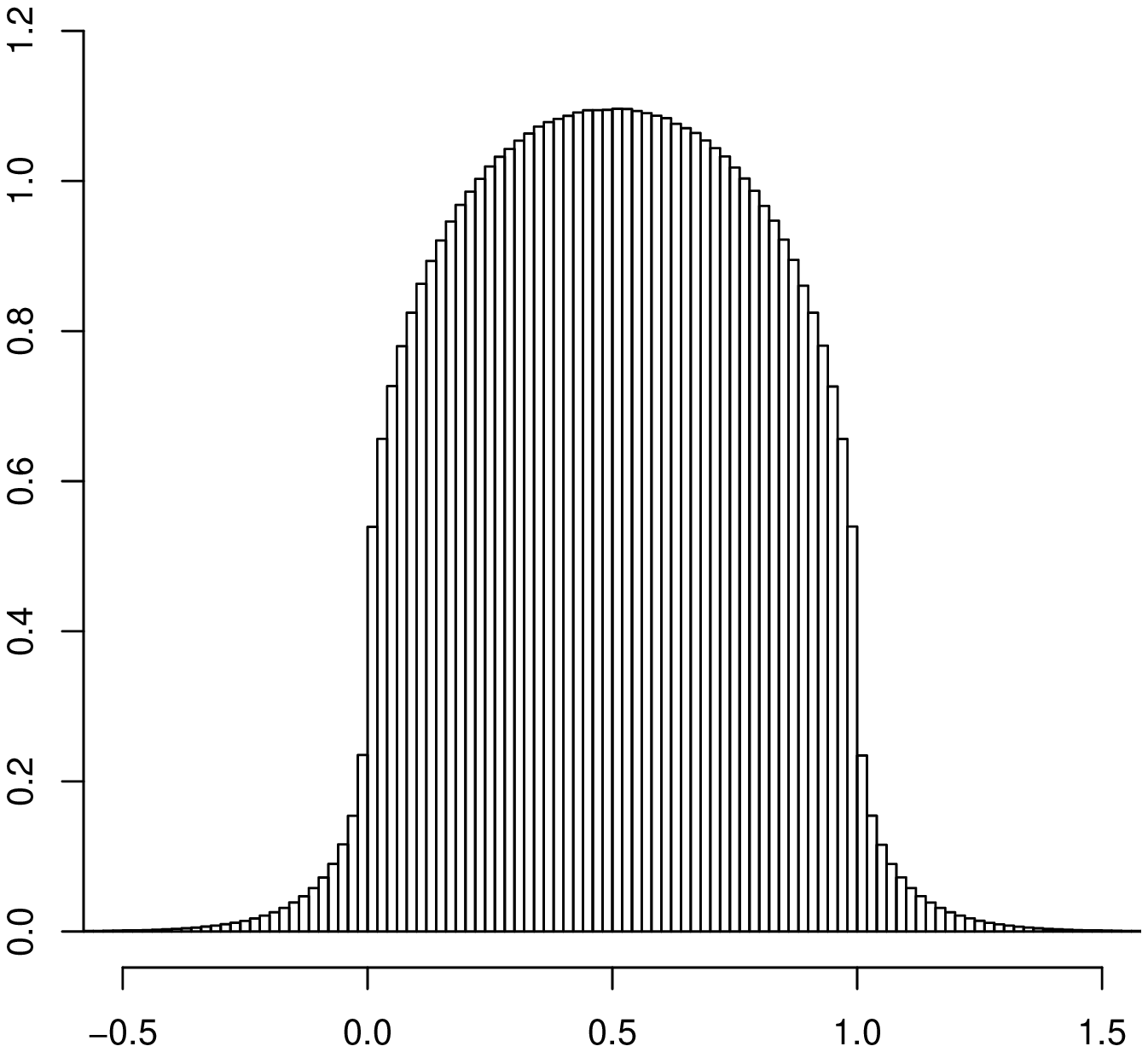}
\end{center}
\caption{Normalized histograms   based on $10^8$ simulations  of the modified $N=3$ model with fixed $\{-1/2,1/2,100\}$ initial condition ({\em left}) and
  i.i.d.\ $U[0,1]$ initial condition ({\em right}).}
\label{fig4}
\end{figure}

\begin{table}[!h]
\begin{center}
\begin{tabular}{c|cccccc}
$k$ & 1 & 2 & 3 & 4 & 5 & 6\\
\hline
\textrm{$\pm \frac{1}{2}$ core} & 0.0001 & 0.5833 & 0.0000 & 1.0192 & $-0.0005$ & 3.4765 \\
U[0,1]   & 0.5000 & 0.3333 & 0.2500 & 0.2029 & 0.1739 & 0.1561
\end{tabular}
\caption{
Empirical $k$th moment values  (to $4$ decimal places) computed from the simulations in Figure \ref{fig4}. }
\label{tab2}
\end{center}
\end{table}

\begin{proof}[Proof of Proposition \ref{prop3}.]
Let $\mu'(t) := \frac{1}{2} ( X_{(1)} (t) + X_{(2)} (t) )$ and $D(t) := | X_{(1)} (t) - X_{(2)} (t) |$
denote the mean and diameter of the core configuration, repeating our notation from above.

Consider  separately the events that $U_{t+1}$ falls in each of the intervals
$[ \min \XX'_3 (t) - D(t) ,  \min \XX'_3 (t))$, $[\min \XX'_3 (t) , \min \XX'_3 (t) + \frac{1}{2} D(t) )$,
$[ \min \XX'_3 (t) + \frac{1}{2} D(t) , \max \XX'_3 (t) )$,
$[ \max \XX'_3(t) ,  \max \XX'_3(t) + D(t)]$, which have probabilities $\frac{1}{3}, \frac16, \frac 16, \frac 13$ respectively.
Given $(\mu'(t), D(t))$, we see, for $V_{t+1}$ a $U[0,1]$ variable, independent of $(\mu'(t), D(t))$,
\begin{align}
\label{recur}
 ( \mu'(t+1) , D(t+1) ) & = \begin{cases}
(\mu'(t) - \frac{1+V_{t+1}}{2} D(t) , V_{t+1} D(t) ) & \textrm{ with probability } \frac{1}{3} \\
(\mu'(t) - \frac{2-V_{t+1}}{4} D(t) , \frac{1}{2} V_{t+1} D(t) )  & \textrm{ with probability } \frac{1}{6} \\
(\mu'(t) + \frac{2-V_{t+1}}{4} D(t) , \frac{1}{2} V_{t+1} D(t) )  & \textrm{ with probability } \frac{1}{6} \\
(\mu'(t) + \frac{1+V_{t+1}}{2} D(t) , V_{t+1} D(t) ) & \textrm{ with probability } \frac{1}{3}
\end{cases} .\end{align}
Writing
 $m_{k} (t) = \Exp [ D(t)^k \mid \XX_3 (0) ]$ we obtain from the second coordinates in (\ref{recur})
\[ m_k (t+1) = \frac{2}{3} \Exp [ V_{t+1}^k] m_k (t) + \frac{1}{3} 2^{-k} \Exp [ V_{t+1}^k] m_k (t) ,\]
 which implies that
\begin{equation}
\label{mk}
 m_k (t) = \left( \frac{1}{3(k+1)} ( 2 + 2^{-k} ) \right)^t D(0)^k .\end{equation}
For example, $m_1 (t) = (5/12)^t D(0)$ and $m_2(t) = (1/4)^t D(0)^2$.

Next we show that $\mu'(t)$ converges. From (\ref{recur}), we have that $| \mu' (t+1) - \mu' (t) | \leq D(t)$, a.s.,
so to show that $\mu'(t)$ converges, it suffices to show that $\sum_{t=0}^\infty D(t) < \infty$ a.s.
But this can be seen from essentially the same argument as
Lemma \ref{lem5}, or directly from the fact that the sum has nonnegative terms and  $\Exp \sum_{t=0}^\infty D(t) = \sum_{t=0}^\infty \Exp [ m_1(t) ]$,
which is finite. Hence $\mu'(t)$ converges a.s.\ to some limit, $\xi_3$ say.
Extending this argument a little, we have from (\ref{recur}) that $| \mu'(t+1) | \leq | \mu'(t) | + D(t)$, a.s., and $D(t+1) \leq V_{t+1} D(t)$, a.s.
Hence for $U_1, U_2, \ldots$ i.i.d.\ $U[0,1]$
random variables, we have $D(t) \leq V_1 \cdots V_t D(0)$ and
\[ | \mu'(t) | = \sum_{s=0}^{t-1} ( | \mu' (s+1)| -| \mu'(s) | ) \leq  \left( 1  +  \sum_{s=1}^\infty \prod_{r=1}^s V_r \right) D(0) =: ( 1 + Z ) D(0).\]
Here $Z$ has the so-called {\em Dickman distribution} (see e.g.\ \cite[\S3]{pw1}), which has finite moments of all orders.
 Hence $\Exp [ | \mu'(t)|^p \mid \XX_3 (0) ]$
 is   bounded independently of $t$, so, for any $p \geq 1$, $(\mu'(t))^p$ is uniformly integrable, and hence
$\lim_{t \to \infty} \Exp [ ( \mu' (t) )^k \mid \XX_3 (0) ] = \Exp [ \xi_3^k \mid \XX_3 (0) ]$ for any $k \in \N$.

We now want to compute the moments of $\xi_3$; by the previous argument, we can first work with the moments of $\mu'(t)$.
Note that, from (\ref{recur}),
\begin{align*} &~~ \Exp [ (\mu'(t+1)  -  \mu'(t))^{ k} \mid \XX_3 (t) ] \\
& = \frac{1+(-1)^k}{3} D(t)^{ k} \Exp \left[ \left( \frac{1+V_{t+1}}{2} \right)^{ k} \right]
+ \frac{1+(-1)^k}{6} D(t)^{ k}  \Exp \left[ \left( \frac{1+V_{t+1}}{4} \right)^{ k} \right] \\
& = \frac{1+(-1)^k}{6} ( 2^{1- k} + 2^{-2k} ) \frac{2^{ k+1} -1}{ k+1} D(t)^{ k} ,\end{align*}
using the fact that $\Exp [ (1+V_{t+1})^{ k} ] = \frac{2^{ k+1} -1}{ k+1}$.
In particular, $\Exp [ ( \mu'(t+1) - \mu'(t) )^k \mid \XX_3 (t) ] =0$ for odd $k$,
so $\Exp [ \mu'(t) \mid \XX_3 (0) ] = \mu'(0)$, and hence $\Exp [ \xi_3 ] = \lim_{t \to \infty} \Exp [ \mu'(t) ] = \Exp [ \mu'(0) ]$,
giving
the first statement in (\ref{xi2}).
In addition,
\begin{align*} & ~~ \Exp [ (\mu'(t+1))^2  -  (\mu'(t))^{ 2} \mid \XX_3 (t) ] \\
& = 2 \mu'(t) \Exp[  \mu'(t+1)  -  \mu'(t)  \mid \XX_3 (t) ] + \Exp [ (\mu'(t+1)  -  \mu'(t))^{ 2} \mid \XX_3 (t) ] \\
& = \frac{7}{16} D(t)^{ 2} .\end{align*}
Hence
\begin{align*} \Exp [ ( \mu'(t) )^{ 2} - (\mu'(0))^{ 2} \mid \XX_3 (0) ]
 & =   \sum_{s=0}^{t-1} \Exp [ ( \mu'(s+1))^2 - (\mu'(s))^2  \mid \XX_3 (0) ]  \\
&  = \frac{7}{16}  \sum_{s=0}^{t-1} m_{ 2} (s)  \\
&  \to \frac{7}{16} \sum_{s=0}^\infty 4^{-s} D(0)^2  ,\end{align*}
as $t \to \infty$, and the limit evaluates to $\frac{7}{12} D(0)^2$, so that
$\Exp [ \xi_3^2 ] = \lim_{t \to \infty} \Exp [ (\mu'(t))^2 ] = \Exp [ (\mu'(0))^2 ] + \frac{7}{12} \Exp [ D(0) ^2 ]$, giving the
second statement in (\ref{xi2}).

Write $L ( \mu'(0), D(0) ) = \xi_3 (\XX_3 (0))$ emphasizing the dependence on the
initial configuration through $\mu'(0)$ and $D(0)$. Then by translation and scaling properties
\begin{equation}
\label{scaling} L ( \mu'(0) , D(0) ) \eqd \mu'(0) + D(0) L ( 0, 1) .\end{equation}
So we work with $L:= L(0,1)$ (which has the initial core points at $\pm \frac{1}{2}$).

We will derive a fixed-point equation for $L$. The argument is closely related to that for (\ref{recur}).
Conditioning on the first replacement and using the transformation relation (\ref{scaling}), we obtain (\ref{Lfixed}).
From (\ref{Lfixed}) we see that $|L|$ is stochastically dominated by
$1 + U  | L |$; iterating this, similarly
 to the argument involving the Dickman distribution above,
 we obtain that $|L|$ is stochastically dominated by $1+Z$, where $Z$ has the Dickman distribution, which
 is determined by its moments. Hence (\ref{Lfixed}) determines a unique distribution for $L$ with $\Exp [ |L|^k ] < \infty$ for all $k$.

Writing (\ref{Lfixed}) in functional form $L \eqd \Psi ( L)$, we see that by symmetry of the form of $\Psi$, also $\Psi ( L) \eqd - \Psi ( - L)$.
Hence $-L \eqd - \Psi (L) \eqd \Psi (-L)$, so $-L$ satisfies the same distributional fixed-point equation as does $L$. Hence $L \eqd -L$.

Writing $\theta_k := \Exp [ L^k]$, which we know is finite, we get
\begin{align*} \theta_k & = \frac{1}{3} \sum_{j=0}^k {\binom k j} ( 1 + (-1) )^j \theta_{k-j} \Exp \left[ \left( \frac{1+U}{2} \right)^j U^{k-j} \right] \\
& ~~ {} + \frac{1}{6} \sum_{j=0}^k \binom k  j ( 1 + (-1) )^j \theta_{k-j} \Exp \left[ \left( \frac{2-U}{4} \right)^j \left( \frac{U}{2}\right)^{k-j} \right] .\end{align*}
 Here
 \begin{align*} \Exp \left[ \left( \frac{1+U}{2} \right)^j U^{k-j} \right] & = 2^{-j} \sum_{\ell = 0}^j {\binom j  \ell} \frac{1}{k-\ell +1 } =: a(k,j) ; \\
\Exp \left[ \left( \frac{2-U}{4} \right)^j \left( \frac{U}{2}\right)^{k-j} \right] & = 2^{-k} \sum_{\ell = 0}^j \binom j \ell \frac{(-1/2)^{j-\ell}}{k-\ell +1 } =: b(k,j).
\end{align*}
So we get
\begin{equation}
\label{thetak}
 \theta_k = \frac{1}{3} \sum_{j \textrm{ even}, \, j \leq k } \binom k  j  \theta_{k-j} ( 2 a (k,j) + b (k,j) ) .\end{equation}
 In particular, as can be seen either directly by symmetry or by an inductive argument using (\ref{thetak}), $\theta_k = 0$ for odd $k$. For even $k$, one can use
(\ref{thetak}) recursively to find $\theta_k$, obtaining for example the values quoted in the proposition.

Note that, by (\ref{scaling}), $\Exp [ \xi_3^3 ] = \Exp [ ( \mu'(0) + L D(0) )^3 ]$, which, on expansion, gives the final statement in (\ref{xi2}).
The first 3 moments in the case of the uniform initial condition  follow  from (\ref{xi2}) and Lemma \ref{initlem}.
For the initial condition with points $\frac{1}{4}, \frac{1}{2}, \frac{3}{4}$, we have $D(0) = \frac{1}{4}$
and $\mu'(0) = \chi \frac38 + (1-\chi) \frac58 = \frac58- \frac \chi 4$, where $\chi$ is the tie-breaker
random variable taking values $0$ or $1$ each with probability $\frac{1}{2}$. It follows that
$\Exp [ \mu'(0)^k] = \frac{1}{2} 8^{-k} (3^k + 5^k)$. Then, using (\ref{scaling}),
\[ \Exp [ \xi_3^k ] = \Exp [ ( \mu'(0) + (L/4))^k ] = \frac{1}{2} 8^{-k} \sum_{j=0}^k {\binom k  j} 2^j \theta_j (3^{k-j}+5^{k-j})  .\]
We can now compute the four moments given in the proposition.
\end{proof}

\section{Appendix 1: Uniform spacings}
\label{appendix}

In this appendix we collect some results about uniform spacings which allow us to obtain
 distributional results
about our uniform initial configurations. The basic results that we build on here can be found in Section 4.2 of \cite{pwong}; see the
references therein for a fuller treatment of the theory of spacings.

Let $U_1, U_2, \ldots, U_n$ be independent $U[0,1]$ points. Denote the corresponding
increasing order statistics $U_{[1]} \leq \cdots \leq U_{[n]}$, and define the induced
{\em spacings} by $S_{n,i} := U_{[i]} - U_{[i-1]}$, $i=1,\ldots,n+1$,
with the conventions $U_{[0]} := 0$ and $U_{[n+1]} := 1$.
We collect some basic facts about the $S_{n,i}$.
The spacings are exchangeable, and any $n$-vector, such as $(S_{n,1}, \ldots, S_{n,n})$,
has the uniform density on the simplex $\Delta_n := \{ (x_1, \ldots, x_n) \in [0,1]^n : \sum_{i=1}^n x_i \leq 1 \}$.

We   need some joint properties of up to 3 spacings. Any 3 spacings have density $f(x_1,x_2,x_3) = n (n-1)(n-2) (1-x_1-x_2-x_3)^{n-3}$
on $\Delta_3$. We will  make use of the facts
\begin{align}
\label{min1}
\min \{ S_{n,1} ,S_{n,2} \} & \eqd \tfrac{1}{2} S_{n,1} , ~~~ (n \geq 1) , \\
( S_{n,1} , \min \{ S_{n,2} ,S_{n,3} \} ) & \eqd ( S_{n,1} , \tfrac{1}{2} S_{n,2} ) , ~~~ (n \geq 2) ;
\label{min2}
\end{align}
see for example Lemma 4.1 of \cite{pwong}. Finally,
 for any $n \geq 1$ and $\alpha \geq 0, \beta \geq 0$,
\begin{equation}
\label{multispace}
\Exp [ S_{n,1}^\alpha S_{n,2}^\beta   ] = \frac{\Gamma (n+1) \Gamma (\alpha +1 ) \Gamma (\beta + 1) }{\Gamma (n + 1 +\alpha + \beta  )} .\end{equation}
In particular $\Exp [ S_{n,1}^k ] = \frac{n!k!}{(n+k)!}$ for   $k \in \N$.

Our main application in the present paper of the results on spacings collected above is to obtain the following result, which we   use
in Section \ref{sec:three}.

\begin{lm}
\label{initlem}
Let $d=1$ and $N=3$. Suppose that $\XX_3(0)$ consists of $3$ independent $U[0,1]$ points.
Then
\[ ( \mu_2 (\XX'_3 (0) ) , D_2 (\XX'_3(0)) ) \eqd ( (S_1 + \tfrac{1}{4} S_2 ) \zeta + (1 - S_1 - \tfrac{1}{4} S_2 ) (1 -\zeta) , \tfrac{1}{2} S_2 ) ,\]
where $\zeta$ is a Bernoulli random variable with $\Pr [ \zeta = 0] = \Pr[ \zeta = 1 ] = 1/2$.
For any $k \in \ZP$,
\begin{align}
\label{momd}
  \Exp [ ( D_2 (\XX'_3(0)) )^k ] & = 2^{-k} \frac{6}{(k+1)(k+2)(k+3)} , \\
  \label{mommu}
  \Exp [ ( \mu_2 (\XX'_3 (0) ) )^k ] & = \frac{4 (3k - 5 + (3^{k+3} - 1) 4^{-(k+1)} )}{(k+1)(k+2)(k+3)} .
  \end{align}
  So, for example, the first 3 moments of $D_2 (\XX'_3(0))$ are $\frac{1}{8}$, $\frac{1}{40}$, and $\frac{1}{160}$, while the first 3
  moments of $\mu_2 (\XX'_3 (0) )$ are $\frac{1}{2}$, $\frac{51}{160}$, and $\frac{73}{320}$. Finally,
  $\Exp [  \mu_2 (\XX'_3 (0) ) ( D_2 (\XX'_3(0)) )^2 ] = \frac{1}{80}$.
\end{lm}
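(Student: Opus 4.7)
The plan is to condition on which of the three sorted points is extremal, exploit left--right symmetry to match the two cases, and then reduce to a single unconditional spacing pair via (\ref{min2}). Order the points as $U_{[1]}\le U_{[2]}\le U_{[3]}$ and write $S_i := S_{3,i}$ for $i=1,\ldots,4$. A direct computation gives the distance of $U_{[1]}$ from the barycentre as $(2S_2+S_3)/3$ and that of $U_{[3]}$ as $(S_2+2S_3)/3$, so almost surely $U_{[1]}$ is the extreme point precisely on $A := \{S_2 > S_3\}$ and $U_{[3]}$ on $B := \{S_2 < S_3\}$. On $B$ one reads off $D = S_2$ and $\mu = S_1 + S_2/2$; under the reflection $x \mapsto 1-x$ (which preserves the law of $\XX_3(0)$ and exchanges $S_1 \leftrightarrow S_4$, $S_2 \leftrightarrow S_3$, hence swaps $A$ and $B$), the conditional law of $(\mu, D)$ on $A$ equals that of $(1-\mu, D)$ on $B$.

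Since $\Pr[A] = \Pr[B] = 1/2$ by exchangeability of $S_2, S_3$, a one-line swap argument shows that the conditional law of $(S_1, S_2)$ given $B$ is that of $(S_1, \min\{S_2, S_3\})$ under the unconditional measure; and by (\ref{min2}) for $n=3$ the latter equals the joint law of $(S_1, S_2/2)$. Hence $(\mu, D)$ given $B$ is distributed as $(S_1 + S_2/4, S_2/2)$ with fresh unconditional spacings on the right. Introducing a $\mathrm{Bernoulli}(1/2)$ variable $\zeta$ independent of $(S_1, S_2)$ to encode whether we are in $B$ (taking $\zeta = 1$) or $A$ (taking $\zeta = 0$, then applying the reflection) gives the asserted representation
\[ (\mu, D) \eqd \bigl(\zeta(S_1 + S_2/4) + (1-\zeta)(1 - S_1 - S_2/4),\; S_2/2\bigr). \]

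The moment formulas follow by direct calculation. Identity (\ref{momd}) is immediate: $\Exp[D^k] = 2^{-k}\Exp[S_2^k] = 6/\bigl(2^k(k+1)(k+2)(k+3)\bigr)$ by (\ref{multispace}). For (\ref{mommu}), set $Y := S_1 + S_2/4$ and write $\Exp[\mu^k] = \tfrac12\Exp[Y^k] + \tfrac12\Exp[(1-Y)^k]$. Expanding $Y^k$ by the binomial theorem and applying (\ref{multispace}), the combination $\binom{k}{j}\Exp[S_1^{k-j} S_2^j] = 6/\bigl((k+1)(k+2)(k+3)\bigr)$ is independent of $j$, reducing $\Exp[Y^k]$ to a geometric sum in $4^{-j}$ with closed value $8(1 - 4^{-(k+1)})/\bigl((k+1)(k+2)(k+3)\bigr)$; a parallel trinomial expansion of $\Exp[(1-Y)^k]$, evaluated using the partial binomial sums $\sum_{i=0}^{2}(-c)^i\binom{k+3}{i}$ for $c=1$ and $c=1/4$, yields a matching closed form, and the average produces (\ref{mommu}). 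Finally, the $\zeta$-representation trivialises the mixed moment: $\Exp[\mu D^2] = \tfrac12\Exp[(Y + (1-Y))(S_2/2)^2] = \tfrac12\Exp[(S_2/2)^2] = 1/80$ since $Y + (1-Y) \equiv 1$. The main obstacle is the bookkeeping for $\Exp[(1-Y)^k]$; the rest hinges on the $j$-independence identity that collapses $\Exp[Y^k]$ to a geometric sum.
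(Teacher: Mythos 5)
Your proof is correct and takes essentially the same route as the paper's: identify the extreme endpoint in terms of the comparison $S_2 \lessgtr S_3$, read off $D = \min\{S_2,S_3\}$ and the corresponding $\mu$, pass from the conditional law to the unconditional one via the $S_2 \leftrightarrow S_3$ exchange and then (\ref{min2}), and compute the moments via (\ref{multispace}) together with the partial binomial sums $\sum_{i=0}^2 (-z)^i\binom{k+3}{i}$ subtracted from $(1-z)^{k+3}$ (the content of (\ref{3sum})). You are somewhat more explicit than the paper in the swap/reflection step that justifies replacing the conditional spacing law with the unconditional one; otherwise the two arguments coincide.
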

\begin{proof}
The $3$ points of $\XX_3(0)$   induce  a partition of the interval $[0,1]$
into uniform spacings $S_{1}, S_{2}, S_{3}, S_{4}$, enumerated left to right (for this proof we suppress the first index in the  notation above).
For ease of notation, write $D:=  D_2 (\XX'_3(0))$ and $\mu := \mu_2 (\XX'_3 (0) )$ for the duration of this proof.
 Then $D = \min \{ S_2, S_3 \} \eqd S_1 / 2$, by (\ref{min1}).
 Moreover, $\min \{S_2, S_3\}$  is equally likely to be either $S_2$ or $S_3$. In the former case,
$\mu = S_1 + \frac{1}{2} \min \{S_2, S_3\}$, while in the latter case $\mu = 1- S_4 - \frac{1}{2} \min \{S_2, S_3\}$.
Using (\ref{min2}), we obtain the following characterization of the joint distribution of $\mu$ and $D$.
\begin{equation}
\label{mud}
(\mu, D) \eqd \begin{cases}
(S_1 + \frac{1}{4} S_2 , \frac{1}{2} S_2 ) & \textrm{ with probability } \frac{1}{2} \\
(1 - S_1 - \frac{1}{4} S_2 , \frac{1}{2} S_2 ) & \textrm{ with probability } \frac{1}{2} .\end{cases}
\end{equation}
In particular,
$\Exp [ D^k ] = 2^{-k} \Exp [ S_{1}^k ]$, which gives (\ref{momd})
by the $n=3$, $\alpha =k$, $\beta= 0$ case of (\ref{multispace}).

For the moments of $\mu$,  we have from (\ref{mud}) that
$\mu$ has the  distribution of $W: = S_1 + \frac{1}{4} S_2$ with probability $1/2$
or $1- W$ with probability $1/2$. So we have
\begin{align*}
\Exp [ \mu^ k ] & = \frac{1}{2} \Exp [ W^k ] + \frac{1}{2} \Exp [ (1-W)^k ]
 = \frac{1}{2} w_k + \frac{1}{2} \sum_{j=0}^{ k } {\binom k  j} (-1)^j w_j ,\end{align*}
 where $w_k := \Exp [ W^k]$. Since $w_k = \Exp [ (S_1 + \frac{1}{4} S_2)^k ]$, we compute
 \[ w_k = \sum_{j=0}^k {\binom k  j} 4^{-j}   \Exp [ S_1^{k-j} S_2 ^{j }  ] = 6 \frac{k!}{(k+3)!} \sum_{j=0}^k   4^{-j},\]
 by the $n=3$, $\alpha = k-j$,  $\beta = j$,  case of (\ref{multispace}). Thus we obtain
 \[ w_k = \frac{8 (1-4^{-(k+1)})}{(k+1)(k+2)(k+3) } .\]
It follows that
\[ \Exp [ \mu^k ] =  \frac{1}{2} w_k + 4 \sum_{j=0}^k \frac{k!}{(j+3)! (k-j)!} (-1)^j - \sum_{j=0}^k \frac{k!}{(j+3)! (k-j)!} ( - 1/4)^j .\]
We deduce (\ref{mommu}), after simplification, from the claim that, for any $z \in \R$,
\begin{align}
\label{3sum}
S(z) & := \sum_{j=0}^k \frac{k!}{(j+3)!(k-j)!} (-z)^j \nonumber\\
& = \frac{k!}{2 z^3 (k+3)!} \left[ z^2 (k+2) (k+3) + 2 -2z (k+3) - 2 (1-z)^{k+3} \right] .\end{align}
Thus it remains to verify (\ref{3sum}). To this end, note that
\begin{align*}
S(z) & = \frac{k!}{(k+3)!} \sum_{j=0}^k {\binom {k+3} {j+3}} (-z)^ j \\
& = \frac{k!}{(k+3)!} \left[ -z^{-3} \sum_{j=0}^{k+3} {\binom {k+3} {j} } (-z)^j + z^{-1} {\binom {k+3} 2} - z^{-2} {\binom {k+3}  1} + z^{-3} {\binom {k+3} 0 } \right] \\
& = \frac{k!}{z^3 (k+3)!} \left[ -(1-z)^{k+3} + \frac{1}{2} z^2 (k+2) (k+3) - z (k+3) + 1 \right] ,\end{align*}
which gives the claim (\ref{3sum}).

For the final statement in the lemma, we have from (\ref{mud}) that
\[ \Exp [ \mu D^2] = \frac{1}{2} \Exp [ (S_1 + \tfrac{1}{4} S_2 ) (\tfrac{1}{4} S_2^2 ) ] +   \frac{1}{2} \Exp [ (1 - S_1 - \tfrac{1}{4} S_2 ) (\tfrac{1}{4} S_2^2 ) ]
= \frac{1}{8} \Exp [ S_2^2 ] = \frac{1}{80} ,\]
by (\ref{multispace}).
\end{proof}

We can also obtain explicit expressions for the densities of $D$ and $\mu$. Since $D \eqd S_1/2$, the density of $D$ is
$f_D(r) = 3 (1-2r)^2$ for $r \in [0,1/2]$. In addition, $\mu$ has density $f_\mu$ given by
\begin{equation}
\label{mudensity}
 f_\mu (r ) = \begin{cases}
4 r [ 3 (1-r) - 4r ] & \textrm{ if } r \in [0,1/4] \\
2 - 4 r (1-r) & \textrm{ if } r \in [1/4,3/4] \\
4 (1-r) [ 3 r - 4(1-r) ] & \textrm{ if } r \in [3/4,1]
\end{cases}.\end{equation}
Indeed, with the representation of $\mu$ as either $W$ or $1-W$ with probability $1/2$ of each, we have
$\Pr [ \mu \leq r ] = \frac{1}{2} \Pr [ W \leq r ] + \frac{1}{2} ( 1 - \Pr [ W < 1-r ] )$.
Assuming that $W$ has a density $f_W$ (which indeed it has, as we will show below), we get
\begin{equation}
\label{den1}
 f_\mu (r) = \frac{1}{2} f_W (r) + \frac{1}{2} f_W (1 -r) .\end{equation}
Using the fact that $W \eqd S_1 + \frac{1}{4}S_2$, we can use the joint distribution of $(S_1, S_2)$ to calculate
\begin{align*} \Pr [ W \leq r ] & = \int_0^1 \ud x_1 \int_0^{1-x_1} \ud x_2 6 (1-x_1 -x_2 ) \1 \{ x_1 + \frac{1}{4} x_2 \leq r \} \\
& = \int_0^r \ud x_1   \int_0^{(4(r-x_1)) \wedge (1-x_1)} \ud x_2 6 (1-x_1 -x_2 ) .\end{align*}
After some routine calculation, we then obtain
\[ \Pr [ W \leq r] = \begin{cases}
1  - \frac{4}{3} (1-r)^3 + \frac{1}{3} (1-4r)^3 & \textrm{ if } r \in [0,1/4] \\
1 - \frac{4}{3} (1-r)^3    & \textrm{ if } r \in [1/4,1] .\end{cases} \]
Hence $W$ has density $f_W$ given by
\[ f_W (r) = \begin{cases}
4 (1-r)^2 - 4 (1-4r)^2 & \textrm{ if } r \in [0,1/4] \\
4 (1-r)^2   & \textrm{ if } r \in [1/4,1] .\end{cases} \]
Then (\ref{mudensity}) follows from (\ref{den1}).

\section{Appendix 2: Continuity of random variables}
\label{appendix2}

In this appendix we give some results that will allow us to deduce
the absolute continuity of certain distributions specified as solutions to fixed-point equations:
specifically, we use these results in the proof of Proposition \ref{Lcontinuous}.
The results in this section may well be known, but we were unable to find 
a reference for them in a form directly suitable for our application, and so we include the (short) proofs.

\begin{lm}\label{lemconts1}
Let $X$ and $Y$ be  independent random variables such that $X$ has an absolutely
continuous distribution. Then for any  $a\ne 0$ we have $\Pr [ XY=a ] =0$.
Morevoer, if $\Pr[ Y=0 ] =0$, then $XY$ is an absolutely continuous random variable.
\end{lm}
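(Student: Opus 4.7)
The plan is to condition on $Y$ and exploit independence, using the fact that an absolutely continuous $X$ has no atoms and assigns mass zero to every Lebesgue-null Borel set.

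For the first statement, I would write, by independence and Fubini,
\[ \Pr [ XY = a ] = \int_\R \Pr [ X y = a ] \, \ud F_Y ( y ) , \]
where $F_Y$ is the distribution function of $Y$. On the fibre $y=0$ the integrand is $\Pr [ 0 = a ] = 0$ because $a\ne 0$; on any fibre $y \neq 0$ the integrand is $\Pr [ X = a/y ] = 0$ because $X$ has no atoms (being absolutely continuous). Hence the integral vanishes.

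For the second statement, I would show directly that $\Pr [ XY \in B ] = 0$ for every Borel set $B \subset \R$ of Lebesgue measure zero, which is the standard characterization of absolute continuity. Again by independence and Fubini,
\[ \Pr [ XY \in B ] = \int_\R \Pr [ X \in B / y ] \, \ud F_Y ( y ) , \]
where $B/y := \{ b/y : b \in B\}$. The hypothesis $\Pr [Y = 0 ] = 0$ allows us to discard the $y=0$ fibre. For $y \neq 0$, scaling by $1/y$ multiplies Lebesgue measure by $|y|^{-1}$, so $B/y$ is Lebesgue-null, and absolute continuity of $X$ gives $\Pr [ X \in B / y ] = 0$. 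Therefore the integral is zero, and $XY$ is absolutely continuous.

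There is no real obstacle here; the only point to verify is the measurability of $y \mapsto \Pr [ X \in B /y ]$ in $y$, which follows from Fubini's theorem applied to the indicator $\1\{ x y \in B \}$ on $\R^2$ with the product measure $\Pr_X \otimes \Pr_Y$. I expect the argument to be essentially three or four lines in each part, and the proof will not require any further hypotheses beyond those stated.
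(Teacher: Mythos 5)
Your proof is correct, but it takes a genuinely different route from the paper's. The paper decomposes $XY$ according to the signs of $X$ and $Y$, passes to logarithms so that the product becomes a sum, invokes the fact that a sum of two independent random variables with at least one absolutely continuous is itself absolutely continuous, and then substitutes back to obtain an explicit density for $XY$ on $(0,\infty)$ and on $(-\infty,0)$; the atom statement and the second claim are then read off from that density formula. You instead condition on $Y$ via Fubini and use the null-set (Radon--Nikodym) characterization of absolute continuity: for the atom statement you integrate $\Pr[Xy=a]$ in $y$ and note it vanishes on every fibre (the fibre $y=0$ because $a\ne 0$, the fibres $y\ne 0$ because $X$ has no atoms), and for the second statement you integrate $\Pr[X\in B/y]$ over $y$ for Lebesgue-null $B$, discard $y=0$ using $\Pr[Y=0]=0$, and use scale-invariance of null sets. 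Your version avoids the sign decomposition entirely (and with it the awkward caveat the paper needs about vanishing sign probabilities), avoids appealing to the auxiliary theorem on sums of independent variables, and only requires the abstract characterization of absolute continuity plus Fubini; the paper's version is more computational and produces an explicit density, but that extra information is not used elsewhere. Both arguments are complete and correct, and yours is arguably cleaner.
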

\begin{proof}
For the moment assume that $\Pr [X<0]$, $\Pr[X>0]$, $\Pr [Y<0]$, and $\Pr[Y>0]$ are all positive.
Take some $0 < c< d$. Then
\begin{align*}
\Pr [ XY\in (c,d) ] & = \Pr [ \log(X)+\log(Y)\in (\log c,\log d) \mid X>0, \,  Y>0 ]
\Pr [ X>0] \Pr [ Y>0]
 \\
& ~~{} + 
\Pr [ \log(-X)+\log(-Y) \in (\log c,\log d) \mid X<0, \,  Y<0 ]
\Pr [ X<0] \Pr [ Y<0] .
\end{align*}
Note that conditioning $X$ on the event $X>0$ (or $X<0$) preserves the continuity of $X$ and the independence
of $X$ and $Y$. Then since the sum of two independent random variables at least one of which absolutely continuous is also absolutely continuous (see \cite[Theorem 5.9, p.\ 230]{moran}) we have
\[ 
\Pr [ \log(X)+\log(Y)\in (\log c,\log d) \mid X>0, \,  Y>0 ]
 = \int_{\log c}^{\log_d} f_+(x) \ud x,
\]
and
\[
\Pr [ \log(-X)+\log(-Y)\in (\log c,\log d) \mid X<0, \, Y<0 ]
=\int_{\log c}^{\log_d} f_-(x) \ud x
\]
for  suitable probability densities $f_+$ and $f_-$.
After the substitution $u = \re^x$, this yields
\begin{align*}
\Pr[ XY\in (c,d) ] &= \int_{c}^{d} 
 \frac{\Pr [ X>0] \Pr [Y>0 ]  f_+(\log u)+\Pr [X<0 ]\Pr [Y<0 ]f_-(\log u)}u \ud u.
\end{align*}
This expression is also valid if some of the probabilities
for $X$ and $Y$ in the numerator of the integrand are zero. Therefore, we have, for any $0 < c < d$, 
\begin{align}
\label{fu}
\Pr [ XY\in (c,d) ] &= \int_{c}^{d}  f(u) \ud u,
\end{align}
for some function $f(u)$ defined for $u>0$.
A similar argument applies to the case $c<d < 0$;
then (\ref{fu}) is valid for any $c < d <0$ as well,
extending $f(u)$ for strictly negative $u$.
In particular, it follows that $\Pr [ XY=a ] =0$ for $a\ne 0$.

Now if $\Pr [Y=0 ]=0$, then $\Pr [ XY \neq 0 ] =1$. Then  we can set $f(0)=0$ 
so that (\ref{fu}) holds for \emph{all} $c,d \in \R$.
\end{proof}

\begin{lm}\label{lemconts2}
Suppose that a random variable $L$ satisfies the distributional equation
\begin{equation*}
 L  \eqd 
\begin{cases}
 Z_1 & \textrm{with~probability~} p_1 \\
 \vdots & \\
Z_n & \textrm{with~probability~} p_n  ,
 \end{cases} 
 \end{equation*}
where $n\in \N$, $\sum_{i=1}^n p_i=1$, $p_i>0$, and
each $Z_i$ is 
an absolutely continuous random variable.
Then $L$ is absolutely continuous.
\end{lm}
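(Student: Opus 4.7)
The plan is to unpack the mixture representation and deduce absolute continuity of $L$ directly from its definition. First I would interpret the hypothesis as saying that, for every Borel set $A \subseteq \R$,
\begin{equation*}
\Pr[L \in A] = \sum_{i=1}^n p_i \Pr[Z_i \in A],
\end{equation*}
i.e., the law $\mu_L$ of $L$ is the convex combination $\sum_i p_i \mu_i$ of the laws $\mu_i$ of $Z_i$. One can make this rigorous by realising $L$ on an enlarged probability space as $L = Z_I$, where $I$ is an independent index with $\Pr[I=i] = p_i$ and the $Z_i$ are taken mutually independent, and then computing $\Pr[L \in A]$ by conditioning on $I$.

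Next I would fix a Borel set $A \subseteq \R$ with Lebesgue measure zero. The absolute continuity of each $Z_i$ gives $\mu_i(A) = 0$ for every $i$, so the weighted identity above yields $\mu_L(A) = 0$, which is the definition of absolute continuity of $L$. Equivalently, Radon--Nikodym produces a density $f_i$ for each $\mu_i$, and the identity above shows that $L$ has density $\sum_{i=1}^n p_i f_i$ with respect to Lebesgue measure.

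There is essentially no obstacle here: the lemma is a one-line consequence of the fact that a finite convex combination of absolutely continuous measures is again absolutely continuous. The only care needed is notational, namely making precise what the ``with probability $p_i$'' in the statement means, and observing that the conclusion is insensitive to any dependence between the $Z_i$ since only their marginal laws enter the mixture.
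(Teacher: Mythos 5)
Your proposal is correct and follows essentially the same route as the paper: both interpret the hypothesis as saying that the law of $L$ is the convex combination $\sum_i p_i \mu_i$ and conclude that $L$ has density $\sum_i p_i f_i$. The paper simply verifies this on intervals $(a,b)$, whereas you phrase it in terms of null sets and general Borel sets, but there is no substantive difference.
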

\begin{proof}
Suppose $Z_i$ has a density $f_i$. Then for any 
$-\infty\le a< b\le +\infty$ we have
\begin{align*}
\Pr [ L\in (a,b) ] 
=\sum_{i=1}^n p_i \Pr[ Z_i \in (a,b) ] 
=\sum_{i=1}^n p_i \int_a^b f_i(x) \ud x
=\int_a^b  \left[\sum_{i=1}^n p_i f_i(x) \right] \ud x,
\end{align*}
which yields the statement of lemma.
\end{proof}

\section*{Acknowledgements}

Parts of this work were done at the University of Strathclyde,
where the third author was also employed, during a couple of visits
by the second author, who is grateful for the hospitatlity of that institution.

\end{document}